\documentclass[brochure,english,12pt]{smfbourbaki}

\usepackage[T1]{fontenc}
\usepackage{lmodern,amssymb,bm}

\usepackage{babel}

\usepackage[utf8]{inputenc}

\usepackage[colorlinks=true, linkcolor=blue, citecolor=red,
urlcolor=blue]{hyperref}

\usepackage[
backend=biber,
style=authoryear, 
citestyle=authoryear-comp,
maxnames=7,
sortcites=false, 
uniquename=false 
]{biblatex}
\usepackage{csquotes}

\newtoggle{tnbcbx@howcited}
\DeclareEntryOption[boolean]{howcited}[true]{\settoggle{tnbcbx@howcited}{#1}}
\DeclareBibliographyOption[boolean]{howcited}[true]{\settoggle{tnbcbx@howcited}{#1}}
\DeclareTypeOption[boolean]{howcited}[true]{\settoggle{tnbcbx@howcited}{#1}}

\newbibmacro{howcited}{%
  \iftoggle{tnbcbx@howcited}
    {\iffieldundef{shorthand}
       {}
       {\setunit{\addspace}%
        \printtext[parens]{%
          \bibstring{citedas}%
          \setunit{\addcolon\space}%
          \printfield{shorthand}%
          \setunit{\addslash}%
          }}}
    {}}

\renewbibmacro{finentry}{\usebibmacro{howcited}\finentry}

\DefineBibliographyStrings{french}{citedas    = {cité ci-dessus avec l'acronyme}}
\DefineBibliographyStrings{english}{citedas    = {heretofore cited as}}

\DefineBibliographyExtras{french}{\restorecommand\mkbibnamefamily}

\DeclareDelimFormat{nameyeardelim}{\addcomma\space}

\DeclareNameAlias{sortname}{given-family}

\renewcommand{\bibnamedash}{\leavevmode\raise3pt\hbox to3em{\hrulefill}\space}

\AtEveryBibitem{%
  \clearfield{issn} 
  \clearfield{isbn} 
  \clearfield{doi} 
  \clearlist{language} 
  \ifentrytype{online}{}{
  \ifentrytype{unpublished}{}{
    \clearfield{url}
  }
  }
}

\renewbibmacro{in:}{%
    \ifentrytype{article}{}{\printtext{\bibstring{in}\intitlepunct}}}

\DeclareFieldFormat[article,periodical,inreference]{number}{\mkbibparens{#1}}
\DeclareFieldFormat[article,periodical,inreference]{volume}{\mkbibbold{#1}}
\renewbibmacro*{volume+number+eid}{%
    \printfield{volume}%
    \setunit*{\addthinspace}
    \printfield{number}%
    \setunit{\addcomma\space}%
    \printfield{eid}}

\DeclareFieldFormat[article,inbook,incollection]{title}{\enquote{#1}\addcomma} 

\date{Juin 2026}
\bbkannee{78\textsuperscript{e} année, 2025--2026}  
\bbknumero{1255}                                      

\title{
Bass notes of random hyperbolic surfaces of large genus
}
\subtitle{after Hide–Magee, Anantharaman–Monk and Hide–Macera–Thomas}

\author{Bram Petri}
\address{Institut de Math\'ematiques de Jussieu--Paris\\ Rive Gauche and  Institut universitaire de \\ France ; Sorbonne Universit\'e and Universit\'e \\ Paris Cit\'e, CNRS, IMJ-PRG, F-75005 Paris,\\ France}
\email{bram.petri@imj-prg.fr}

\usepackage{overpic}
\usepackage{dsfont}

\theoremstyle{definition}

\begin{document}

\maketitle

\tableofcontents

\section*{Introduction}

In the last couple of years, multiple constructions of hyperbolic surfaces of large area with near optimal \textbf{spectral gaps}, which had long been sought after, have been found. All of these constructions use \textbf{random surfaces}. The goal of this text is both to put these results in context and to explain some of the ideas behind them.

We will focus on two models of random surfaces. The first model we will discuss yielded the first examples of sequences of closed hyperbolic surfaces whose area tends to infinity and whose spectral gap tends to that of the hyperbolic plane, as proven by \textcite{HideMagee}. This construction uses compactifications of random finite degree covers of the thrice punctured sphere. After this, we will describe the series of works by \textcite{AnantharamanMonk1,AnantharamanMonk2,AnantharamanMonk3,AnantharamanMonk4,AnantharamanMonk5}, culminating in the proof of the fact that near optimal spectral gaps are also typical for a surface of large genus chosen at random using the Weil--Petersson measure. The most recent result we will treat is due to \textcite{HideMaceraThomas}. Their work gives an alternative route to near optimal spectral gaps for Weil--Petersson random surfaces and also yields an error term on the spectral gap that is polynomially small as a function of the genus of the surface.

There are many other results related to spectral gaps of random surfaces that will not be discussed in great detail in this text. Some notable omissions are the work by \textcite{CalderonMageeNaud} on spectral gaps of random covers of Schottky surfaces, the original papers on the polynomial method by
\textcite{ChenGarzaVargasTroppvanHandel,ChenGarzaVargasvanHandel} and the fact that random finite degree covers of a closed surface have a near optimal spectral gap, proven by \textcite{MageePudervanHandel}.

This text is organized as follows. We will start by reminding the reader of some of the basics on the (spectral) geometry of hyperbolic surfaces. After this, we will discuss results from graph theory that have inspired many of the recent developments. Then we will survey some of the recent history on all the different models of random hyperbolic surfaces. Of course there are too many connections to draw and some choices have to be made. These choices are mostly based on the personal preferences of the author and he apologizes in advance to the readers who had hoped to read about one of the many related subjects that haven't made it to the text.

The last three sections are devoted to the results by Hide--Magee, Anantharaman--Monk and Hide--Macera--Thomas respectively. It is however impossible to do justice to these three works in this short text. In particular, we will not attempt to give full proofs, but will rather try to sketch some of the main ideas. For details, we refer the reader to the original articles. 

\subsection*{Acknowledgement} The author would like to thank Nicolas Bourbaki for asking him to write about this series of papers. It has made the author appreciate the many new ideas involved in them even more and he hopes that this text conveys some of them to the reader too. He also thanks  \textcite{sagemath}, Figure \ref{plot_plancherel} was produced using \texttt{SageMath}. Finally, he thanks Claire Burrin, Hans Mahnig, Joe Thomas and Nicolas Bourbaki for comments on a previous version of this text.

\section{The geometry and spectra of hyperbolic surfaces}

Hyperbolic surfaces are a classical object of study and many texts have been written about them, like for instance the books by \textcite{Buser_book,Iwaniec,Bergeron_book,Borthwick_hyperbolic}. This section will present a quick reminder and some examples of hyperbolic surfaces. This is not meant to be a full introduction and we refer the interested reader to the aforementioned books for more details. 

\subsection{Definitions}
We start with a definition:

\begin{defi}
A \textbf{hyperbolic surface} is a complete Riemannian $2$-manifold of constant sectional curvature equal to $-1$.
\end{defi}

The most basic example of a hyperbolic surface is the \textbf{hyperbolic plane}. The hyperbolic plane also allows a short definition, namely it is the unique (up to isometry) simply connected hyperbolic surface without boundary. A concrete Riemannian manifold of sectional curvature equal to $-1$ is usually called a \textbf{model} for the hyperbolic plane. The \textbf{upper half plane model}
\[
\mathbb{H}^2 = \left( \left\{ z\in\mathbb{C};\; \mathrm{Im}(z)>0\right\},\; ds^2 = \frac{dx^2+dy^2}{y^2}\right)
\]
is often convenient for computations and the \textbf{disk model}
\[
\mathbb{D}^2 = \left( \left\{ z\in\mathbb{C};\;\lvert z\rvert < 1\right\},\; ds^2 = 4\frac{dx^2+dy^2}{(1-x^2-y^2)^2}\right)
\]
is often convenient for drawings because it looks compact to our Euclidean eyes. Both of these models are conformal. Indeed, because the metric is in the conformal class of the (incomplete) Euclidean metric, the angles we see are also the angles with respect to the hyperbolic metric. The geodesics in $\mathbb{H}^2$ are half circles orthogonal to $\mathbb{R}$ and vertical lines. In $\mathbb{D}^2$, the geodesics are given by half circles orthogonal to the circle and diagonals through the origin. 

Now that we have a definition of the hyperbolic plane, we also obtain an equivalent definition of hyperbolic surfaces. Indeed, a hyperbolic surface without boundary is a complete Riemannian $2$-manifold that is locally isometric to $\mathbb{H}^2$. If we want to allow totally geodesic boundary, we ask that points on the boundary have a neighborhood that is isometric to the boundary of a half plane delimited by a geodesic in $\mathbb{H}^2$.

Another equivalent definition is that a hyperbolic surface without boundary is a surface of the form $\Gamma\backslash\mathbb{H}^2$, where $\Gamma<\mathrm{Isom}(\mathbb{H}^2)$ is a discrete and torsion free subgroup of the isometry group $\mathrm{Isom}(\mathbb{H}^2)$ of $\mathbb{H}^2$. This isometry group can be identified with $\mathrm{PSL}(2,\mathbb{R}) \rtimes\mathbb{Z}/2\mathbb{Z}$. If we denote the generator of the $\mathbb{Z}/2\mathbb{Z}$ factor by $r$ (for reflection), then $\mathrm{Isom}(\mathbb{H}^2)$ acts on $\mathbb{H}^2$ by 
\[
\left[\begin{array}{cc}
a & b \\
c & d
\end{array}\right] \cdot z = \frac{az+b}{cz+d} \quad \text{and} \quad r(z) = -\overline{z} \quad \text{for } \left[\begin{array}{cc}
a & b \\
c & d
\end{array}\right] \in \mathrm{PSL}(2,\mathbb{R}) \text{ and } z\in\mathbb{H}^2.
\]
In particular, the subgroup $\mathrm{PSL}(2,\mathbb{R})<\mathrm{Isom}(\mathbb{H}^2)$ is exactly the group of orientation preserving isometries. We will only consider oriented surfaces in what follows, so these are quotients $\Gamma\backslash\mathbb{H}^2$ with $\Gamma < \mathrm{PSL}(2,\mathbb{R})$ discrete and torsion free. 

If we want to allow totally geodesic boundary, we need to consider quotients of the form $\Gamma\backslash C$, where $\Gamma<\mathrm{Isom}(\mathbb{H}^2)$ is discrete and torsion-free and $C\subset \mathbb{H}^2$ is a closed and convex subset, preserved by $\Gamma$.

By the \textbf{uniformization theorem}, constant curvature metrics, considered up to scaling, correspond one-to-one to Riemann surface structures considered up to biholomorphism. So we can also think of hyperbolic surfaces as Riemann surfaces.

\subsection{Examples of hyperbolic surfaces}

\subsubsection{The Bolza surface and Hurwitz surfaces}

Thus far, our only explicit example of a hyperbolic surface is the hyperbolic plane, so its high time for some more examples. We start with the \textbf{Bolza surface}. The quickest way to define it is to say that is the unique closed Riemann surface of genus $2$ that has exactly $48$ automorphisms (or orientation preserving isometries of the hyperbolic metric). Indeed, in general Hurwitz's theorem implies that the number of automorphisms of a closed Riemann surface of genus $g$ is at most $84(g-1)$, and when $g$ is fixed, there are at most finitely many surfaces realizing this bound. If there are surfaces realizing this bound in a given genus, then they are called \textbf{Hurwitz surfaces}. In genus $2$, there aren't any Hurwitz surfaces and the Bolza surface is the unique maximizer of the size of the automorphism group.

\begin{figure}[ht]
\begin{center}
\begin{overpic}{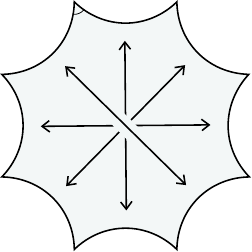}
\put(30,85){$\frac{\pi}{4}$}
\end{overpic}
\caption{A regular octagon and the side pairings that turn it into the Bolza surface}\label{pic_bolza}
\end{center}
\end{figure}

Of course, the Bolza surface also admits a more explicit description. Namely, it's the hyperbolic surface one obtains by gluing together the pairs of opposite sides of a regular hyperbolic octagon (so its sides are geodesic segments that all have the same length and its interior angles are all equal) with interior angles $\frac{\pi}{4}$, as in Figure \ref{pic_bolza}. 

The Bolza surface can also be thought of as a \textbf{triangle surface}: a hyperbolic surface $\Gamma\backslash\mathbb{H}^2$ determined by a normal subgroup $\Gamma \triangleleft \Theta(p,q,r)$ of finite index, where $\Theta(p,q,r)$ is the group generated by the rotations of orders $p$, $q$ and $r$ in the vertices of a hyperbolic triangle with interior angles $\frac{\pi}{p}$, $\frac{\pi}{q}$ and $\frac{\pi}{r}$ respectively (which exists if and only if \linebreak $\frac{1}{p}+\frac{1}{q}+\frac{1}{r}<1$). 

In the case of the Bolza surface, we can take $(p,q,r)=(2,3,8)$ in which case the index is $48$. Indeed, in general $\Theta(p,q,r)/\Gamma$ acts on $\Gamma\backslash\mathbb{H}^2$ by orientation preserving isometries. By work of \textcite{Takeuchi} (see also Section 13.3 of the book by \textcite{MaclachlanReid}), $\Theta(2,3,8)$ is an \textbf{arithmetic group}, so the Bolza surface is also an \textbf{arithmetic surface}. For more on arithmetic groups, including their definition, we refer the reader to the books by \textcite{MaclachlanReid,WitteMorris}.

It turns out that Hurwitz surfaces correspond exactly to normal subgroups of $\Theta(2,3,7)$. In particular, again using Takeuchi's theorem, these are all arithmetic as well. The Hurwitz surface of smallest genus is the \textbf{Klein quartic}, the unique Hurwitz surface of genus $3$, a hyperbolic surface about which a whole book has been written, edited by \textcite{Eightfoldway}.

\subsubsection{The thrice punctured sphere}\label{sec_thrice_punc}

Our next example is a non-compact surface of finite area: the thrice punctured sphere. It follows from the uniformization theorem, combined with the fact the the automorphism group of the Riemann sphere acts triply transitively, that there is only one complete hyperbolic metric on the thrice punctured sphere.

\begin{figure}[ht]
\begin{center}
\begin{overpic}{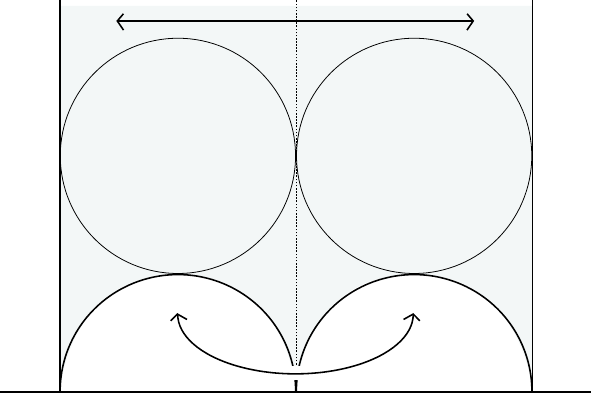}
\put(7,-4){$-1$}
\put(49,-4){$0$}
\put(89,-4){$1$}
\end{overpic}
\caption{Building the thrice punctured sphere}\label{pic_thrice_punctured}
\end{center}
\end{figure}

To build it explicitly, we can take the ideal quadrilateral in $\mathbb{H}^2$ with ideal vertices $-1$, $0$, $1$ and $\infty$ and glue the sides together according to the pattern drawn in Figure \ref{pic_thrice_punctured}. Because of the ideal vertices, the sides have infinite length. So per pair of sides there is a one parameter family of isometries between them that we could use for the gluing and thus there are two choices to be made. This is what the inscribed disks are for. They define $4$ special points on the sides of our quadrilateral. The gluing that guarantees that the resulting hyperbolic metric is complete is the unique gluing by isometries that identifies these special points. 

This is also exactly the gluing that ensures that the corresponding isometries of $\mathbb{H}^2$ are parabolic, so the structure near the punctures is that of a hyperbolic \textbf{cusp}. That is, the punctures have neighborhoods that are isometric to
\[
C_t = \left\{ z\in \mathbb{H}^2;\; \mathrm{Im}(z)>t\right\} \; \Big/ \; \left\langle \left[\begin{array}{cc}
1 & 1 \\
0 & 1
\end{array}\right] \right\rangle
\]
for some $t>0$. 

In fact, the elements of $\mathrm{PSL}(2,\mathbb{R})$ that correspond to the two gluings are given by 
\[
\left[\begin{array}{cc}
1 & 2 \\
0 & 1
\end{array}\right] \quad \text{and} \quad \left[\begin{array}{cc}
1 & 0 \\
2 & 1
\end{array}\right],
\] 
which freely generate the level $2$ \textbf{principal congruence subgroup} of $\mathrm{PSL}(2,\mathbb{Z})$, i.e. the group
\[
\Gamma(N) = \left\{
\left(
\begin{array}{cc}
a & b \\
c & d
\end{array}\right) \in\mathrm{SL}(2,\mathbb{Z});\; \begin{array}{c}
a \equiv d \equiv 1 \mod N \\
b \equiv c \equiv 0 \mod N
\end{array}\right\} \; \Big/\; \left\{ \pm \left(\begin{array}{cc} 1 & 0 \\ 0 & 1 \end{array} \right)\right\}
\]
for $N=2$. In other words the thrice punctured sphere can be uniformized as $\Gamma(2)\backslash\mathbb{H}^2$. Because $\mathrm{PSL}(2,\mathbb{Z})$ is the $(2,3,\infty)$-triangle group, we are still in the realm of arithmetic triangle surfaces.

\subsubsection{Pants decompositions}\label{sec_pants_dec}

As even the reader who is less familiar with hyperbolic surfaces probably suspects, most hyperbolic surfaces are in fact not arithmetic triangle surfaces. Indeed, on a fixed surface of finite type, there are uncountably many pairwise non-isometric hyperbolic structures, whereas only finitely many of these structures are arithmetic. In fact, the factorial growth rate of this number is known, due to \textcite{BelolipetskyGelanderLubotzkyShalev}. 

In this section we will sketch how to build all orientable hyperbolic surfaces of finite area.

\begin{figure}[ht]
\begin{center}
\includegraphics[scale=1]{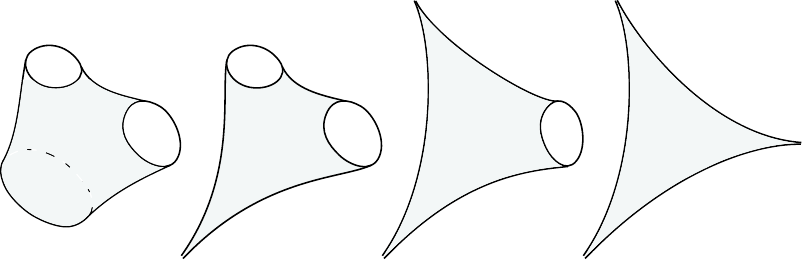}
\caption{Four topological types of pants}\label{pic_pants}
\end{center}
\end{figure}

We start with a building block (see Figure \ref{pic_pants}):
\begin{defi}
A \textbf{pair of pants} is an orientable hyperbolic surface with empty or totally geodesic boundary that is homeomorphic to a sphere with $n$ punctures and $b$ boundary components such that $n+b=3$.
\end{defi}

What makes these convenient building blocks is that, up to isometry, pairs of pants are determined by the lengths of their boundary components (where we will identify a cusp with a boundary component of length $0$). 

We can now build hyperbolic surfaces of finite area by gluing together a finite number of pairs of pants. The only thing to make sure of is that those pairs of boundary components that we plan to glue together have the same length. We can use any isometry between such a pair of boundary components for the gluing, so per pair, we obtain a one parameter family of gluings.

The fact that this construction yields all orientable hyperbolic surfaces of finite area follows from the fact that free homotopy classes of essential\footnote{\label{fn_essential_curve}A closed curve is called essential if it is not homotopic to a point, a puncture or a boundary component.} closed curves contain unique geodesics and that these geodesic representatives also minimize the total number of intersections. So, given a hyperbolic surface, we can always find a \textbf{pants decomposition} -- a collection of simple closed curves that cut it into pairs of pants -- by putting a topological pants decomposition in geodesic position.

\subsection{Teichmüller and moduli spaces}
The above discussion implies that if we fix a pants decomposition of a surface, then we obtain a parameter space of hyperbolic surfaces. Indeed, to every curve in the pants decomposition correspond two parameters: a \textbf{length} and a \textbf{twist} that records the isometry we used for the gluing. 

At this point, it may seem most natural to let the twists live on the circle (corresponding to where some base point lands under the isometry). We will however let the twist be a real number instead. So if $\gamma$ is a curve in the pants decomposition and we change the twist parameter from $\tau_\gamma$ to $\tau_\gamma+\delta$, then the new surface is obtained as follows. We open the old surface up along the geodesic homotopic to $\gamma$ and complete it so as to obtain a surface with boundary components $\gamma_1$ and $\gamma_2$ that we parametrize with unit speed, in such a way that the gluing ``$\gamma_1(t)\sim\gamma_2(t)$ for all $t\in\mathbb{R}$'' corresponds to the original surface. The gluing ``$\gamma_1(t) \sim \gamma_2(t+\delta)$'' then yields the surface we are after. See Figure \ref{pic_twist} for a depiction of this process.

\begin{figure}[ht]
\begin{center}
\begin{overpic}{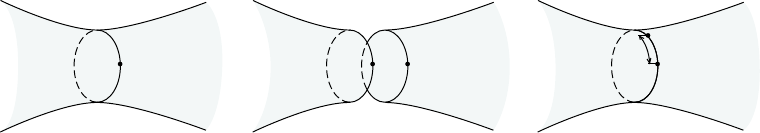}
\put(12,1.5){$\gamma$}
\put(44,1.5){$\gamma_1$}
\put(50,1.5){$\gamma_2$}
\put(83,9.5){$\delta$}
\end{overpic}
\caption{Performing a twist}\label{pic_twist}
\end{center}
\end{figure} 

An Euler characteristic computation implies that a pants decomposition of an orientable surface of genus $g$ with $n$ punctures and $b$ boundary components contains $3g+n+b$ curves. This does not count the curves that form the potential boundary components. In the deformation spaces of hyperbolic surfaces that we will consider here, we will always assume the boundary lengths to be fixed. The space of hyperbolic surfaces we obtain is hence a space homeomorphic to $\Big(\mathbb{R}_{>0}\times\mathbb{R}\Big)^{3g+n+b-3}$. 

This deformation space turns out to be homeomorphic to the \textbf{Teichmüller space} of the underlying surface. As sets, these are defined as:
\[
\mathcal{T}_{g,n}(L_1,\ldots,L_n) = \left\{(X,f);\; \begin{array}{c}
X \text{ a hyperbolic surface with boundary}\\
 \text{components }\delta_1,\ldots,\delta_n \text{ of length }L_1,\ldots,L_n\\
\text{ respectively, and }f:\Sigma_{g,n}(L) \to X \text{ an} \\
\text{orientation preserving diffeomorphism,} \\
\text{with }f(\beta_i)=\delta_i,\; i=1,\ldots,n
\end{array} \right\} \; \Big /\; \sim.
\]
Here $L \in [0,\infty)^n$ and $L_i=0$ corresponds to a cusp and $\Sigma_{g,n}(L)$ is a fixed reference surface with boundary components and punctures $\beta_1,\ldots,\beta_n$: if $L_i=0$ then $\beta_i$ is a puncture and otherwise it's a boundary component. Finally $(X_1,f_1)\sim (X_2,f_2)$ if and only if there exists an isometry $\varphi:X_1\to X_2$ that maps the $i$\textsuperscript{th} boundary component of $X_1$ to the $i$\textsuperscript{th} boundary component of $X_2$ for $i=1,\ldots,n$ and such that 
\[
f_2^{-1}\circ \varphi\circ f_1 : \Sigma_{g,n}(L) \to \Sigma_{g,n}(L)
\] 
is homotopic to the identity. We will write $\Sigma_g := \Sigma_{g,0}$ and $\mathcal{T}_g :=\mathcal{T}_{g,0}$.

The identification with $\Big(\mathbb{R}_{>0}\times\mathbb{R}\Big)^{3g+n-3}$ now goes as follows. We first fix a pants decomposition of $\Sigma_{g,n}(L)$, i.e. a collection of curves $\alpha_1,\ldots,\alpha_{3g+n-3}$ that decompose $\Sigma_{g,n}(L)$ into pairs of pants. Then given a point $[X,f]\in\mathcal{T}_{g,n}(L)$, we record the lengths and twists of $f(\alpha_i)$, giving us a point in  $\Big(\mathbb{R}_{>0}\times\mathbb{R}\Big)^{3g+n-3}$. The lengths are easy to define, $X$ carries a hyperbolic metric, so we can put $f(\alpha_i)$ in geodesic position. The twists are more subtle and we refer to the literature for a proper definition. The coordinates we have just described are called \textbf{Fenchel--Nielsen coordinates}. There is an independently defined topology on Teichmüller space, which turn these coordinates into a homeomorphism. We will not get into this in this text and just say that the topology on $\mathcal{T}_{g,n}(L)$ is given by these coordinates.

We have already observed that Teichmüller space overparametrizes hyperbolic surfaces. In fact, taking the twist in $\mathbb{R}$ instead of the circle is not the only source of this. Namely, the \textbf{mapping class group} $\mathrm{MCG}(\Sigma_{g,n}(L))$ of $\Sigma_{g,n}(L)$ -- the group of homotopy classes of orientation preserving diffeomorphisms that restrict to the identity on $\partial\Sigma_{g,n}(L)$ and also don't permute the punctures -- acts on $\mathcal{T}_{g,n}(L)$ by
\[
[\varphi]\cdot [X,f] = [X,f\circ\varphi^{-1}],\quad [X,f]\in\mathcal{T}_{g,n}(L),\;[\varphi] \in \mathrm{MCG}(\Sigma_{g,n}(L)).
\]
The corresponding \textbf{moduli space} is the quotient of this action:
\[
\mathcal{M}_{g,n}(L) = \mathrm{MCG}(\Sigma_{g,n}(L)) \; \backslash \; \mathcal{T}_{g,n}(L).
\]
We note that punctures and boundary components are still distinguishable in $\mathcal{M}_{g,n}(L)$, because we have chosen to quotient only by mapping classes that preserve them. We will write $\mathcal{M}_g := \mathcal{M}_{g,0}$.

The mapping class group acts properly discontinuously, but not freely, on $\mathcal{T}_{g,n}(L)$, so $\mathcal{M}_{g,n}(L)$ has the structure of a $(6g+2n-6)$-dimensional orbifold. When either $n=0$ or $L=(0,\ldots,0)$ this space coincides with the moduli space of Riemann surfaces (potentially with marked points), considered up to biholomorphism. For an introduction to this subject, we refer the reader to the books by \textcite{ImayoshiTaniguchi,FarbMargalit}.

\subsection{Spectral theory}

Next up, we discuss the \textbf{Laplacian} operator on a hyperbolic surface. On a general Riemannian manifold $M$, this is the composition of minus the divergence with the gradient $\Delta_M=-\mathrm{div}\circ\mathrm{grad}:C^\infty(M)\to C^\infty(M)$. If $M$ is geodesically complete (as all manifolds without boundary that we will consider are), then $\Delta_M$ is an essentially self-adjoint operator with domain $C^\infty_c(M)$: the space of compactly supported smooth functions. 

For the spectral theory on general Riemannian manifolds we refer the reader to the books by \textcite{Chavel,Borthwick_spectral_theory}. We will now give a brief introduction to the spectral theory of hyperbolic surfaces.

\subsubsection{The hyperbolic plane}
We start with the hyperbolic plane. A computation shows that in the upper half plane model,
\[
\Delta_{\mathbb{H}^2} = - y^2 \cdot \left(\frac{\partial^2}{\partial x^2} + \frac{\partial^2}{\partial y^2}\right)
\]
and in the disk model
\[
\Delta_{\mathbb{D}^2} = -\frac{(1-x^2-y^2)^2}{4}\cdot \left(\frac{\partial^2}{\partial x^2} + \frac{\partial^2}{\partial y^2}\right).
\]
The presence of the inverse of the conformal factor in this expression is a $2$-dimensional phenomenon.

The spectrum of $\Delta_{\mathbb{H}^2}$ is given by the following lemma:
\begin{lemm}\label{lem_spec_H2}
The spectrum of $\Delta_{\mathbb{H}^2}$ on $L^2(\mathbb{H}^2)$ is $[\frac{1}{4},\infty)$.
\end{lemm}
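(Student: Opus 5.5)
The proof splits into two inclusions: the spectrum is contained in $[\frac14,\infty)$, and every $\lambda\ge\frac14$ lies in it. Since $\Delta_{\mathbb{H}^2}$ is essentially self-adjoint on $C^\infty_c(\mathbb{H}^2)$ and nonnegative, its spectrum is a closed subset of $[0,\infty)$. Both halves are therefore quadratic-form or Weyl-sequence statements that can be checked on $C^\infty_c$ functions in the upper half plane model.

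For the lower bound I will prove the Poincaré-type inequality
\[
\langle \Delta f, f\rangle = \int_{\mathbb{H}^2} \lvert\nabla f\rvert^2\, d\mathrm{vol} \;\ge\; \frac14 \int_{\mathbb{H}^2}\lvert f\rvert^2\, d\mathrm{vol}, \qquad f\in C^\infty_c(\mathbb{H}^2).
\]
In the upper half plane we have $d\mathrm{vol} = y^{-2}\,dx\,dy$ and $\lvert\nabla f\rvert^2 d\mathrm{vol} = (f_x^2+f_y^2)\,dx\,dy$. It therefore suffices to show $\int f_y^2\,dy \ge \frac14\int f^2 y^{-2}\,dy$ for each fixed $x$. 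This is the classical one-dimensional Hardy inequality on $(0,\infty)$ for compactly supported functions. To prove it, write $f = y^{1/2}g$, expand $f_y^2$, and integrate the cross term by parts: the cross term $\int y^{1/2}g\cdot g_y\, y^{-1/2}\,dy = \frac12\int (g^2)_y\,dy$ vanishes by compact support. What remains is $\int (y g_y^2 + \frac14 y^{-1} g^2)\,dy \ge \frac14\int f^2y^{-2}\,dy$. By closure of the form, this inequality gives $\Delta \ge \frac14$ as a self-adjoint operator, so the spectrum is contained in $[\frac14,\infty)$.

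For the reverse inclusion I will use Weyl's criterion. The functions $y^{s}$ with $s=\frac12+ir$, $r\in\mathbb{R}$, satisfy $\Delta y^s = s(1-s)y^s = (\frac14+r^2)y^s$, and $\frac14+r^2$ covers all of $[\frac14,\infty)$. They are not in $L^2$, since $\int \lvert y^s\rvert^2 y^{-2}dy = \int y^{-1}dy$ diverges logarithmically at both ends. I will truncate them as $f_n(x,y) = \chi_n(x)\,\psi_n(\log y)\, y^{s}$. Here $\psi_n$ is a bump function equal to $1$ on $[-n,n]$ with transitions spread over length $n$ in the variable $t=\log y$, and $\chi_n$ is a similar wide bump in $x$. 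The alternative is to pass to geodesic polar coordinates and cut off radial spherical functions, but the half-plane version is the most elementary.

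The main obstacle is checking that $\lVert(\Delta-\lambda)f_n\rVert/\lVert f_n\rVert\to 0$. The $x$-cutoff is the delicate one: $\Delta$ contains $-y^2\partial_x^2$, which is large where $y$ is large. So the $x$-width must grow faster than $e^{n}$, for instance $\chi_n(x)=\chi(x e^{-2n})$. In the variable $t=\log y$, with $dx\,dy/y^2 = e^{-t}dx\,dt$ and $\lvert y^s\rvert^2 = e^t$, the norm is essentially $\lVert\chi_n\rVert^2\lVert\psi_n\rVert^2$ in flat measure, which is of order $e^{2n}\cdot n$. The commutator terms involve $\psi_n'$ and $\psi_n''$, of size $O(1/n)$, and $y^2\chi_n''$, of size $O(e^{2n}e^{-4n})$ on the support. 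Both are small relative to the norm, so $\lambda=\frac14+r^2$ lies in the spectrum and the spectrum is exactly $[\frac14,\infty)$.
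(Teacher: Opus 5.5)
Your proposal is correct and follows the paper's proof. The lower bound is the one-dimensional Hardy inequality $\int f_y^2\,dy\ge\frac14\int f^2y^{-2}\,dy$ in $y$ for each fixed $x$; you prove it via the substitution $f=y^{1/2}g$, the paper via integration by parts and Cauchy--Schwarz. The reverse inclusion is the Weyl-sequence argument with cut-offs of $y^{1/2+ir}$ that the paper cites from Borthwick.

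There is one slip in your bookkeeping. Your $\psi_n$ lives on $t\le 2n$, so $y$ reaches $e^{2n}$, and there $y^2\chi_n''$ is $O(1)$ rather than $O(e^{-2n})$. So the $x$-width must keep pace with $e^{2n}$, not just outgrow $e^{n}$. Your actual choice $\chi_n(x)=\chi(xe^{-2n})$ does this: the squared $L^2$ norm of this term is at most $e^{-6n}\lVert\chi''\rVert^2\int_{-2n}^{2n}e^{4t}\,dt\lesssim e^{2n}$. That is $O(1/n)$ relative to $\lVert f_n\rVert^2\asymp ne^{2n}$, so the conclusion stands.
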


\begin{proof}[Proof sketch]
First we prove that the spectrum of $\Delta_{\mathbb{H}^2}$ is contained in $[\frac{1}{4},\infty)$. For this part we use an argument that can for instance be found in the article by \textcite[page 359]{McKean}. Take any $f\in C_c^\infty(\mathbb{H}^2)$. For $x\in\mathbb{R}$ fixed, integration by parts gives
\begin{align*}
\frac{1}{4}\left(\int_0^\infty \frac{f^2}{y^2}\;dy\right)^2 & = \frac{1}{4} \left(\int_0^\infty \frac{\partial(f^2)}{\partial y}\cdot \frac{dy}{y}\right)^2\\
 & = \left( \int_0^\infty \frac{f\cdot f'}{y} \; dy \right)^2 \leq \int_0^\infty \frac{f^2}{y^2}\;dy \cdot \int_0^\infty (f')^2\;dy.
\end{align*}
This implies that
\[
\frac{1}{4}\int_0^\infty \frac{f^2}{y^2}\; dy \leq \int_0^\infty (f')^2\; dy = -\int_0^\infty \frac{f}{y^2} \cdot y^2 \frac{\partial^2f}{\partial y^2}\; dy,
\]
integrating by parts again in the last equality. Now we integrate with respect to $x$ and obtain
\[
\frac{1}{4}\int_{\mathbb{H}^2} \frac{f^2}{y^2}\; dx\;dy \leq - \int_{\mathbb{H}^2}\frac{f}{y^2}\cdot y^2 \frac{\partial^2f}{\partial y^2}\;dxdy \leq - \int_{\mathbb{H}^2}\frac{f}{y^2}\cdot  \Delta_{\mathbb{H}^2}f\;dxdy, 
\]
where we have used the fact that for $y$ fixed $-\int_{-\infty}^\infty f \cdot \frac{\partial^2 f}{\partial x^2}\; dx \geq 0$ in the last inequality. Using the variational characterization of the spectrum, this proves that $\Delta_{\mathbb{H}^2} \subset [\frac{1}{4},\infty)$.

The eigenfunctions of $\Delta_{\mathbb{H}^2}$ are not in $L^2(\mathbb{H}^2)$, so one way to prove that the interval $[\frac{1}{4},\infty)$ is contained in the spectrum of $\Delta_{\mathbb{H}^2}$, is to construct, for all $\lambda\in [\frac{1}{4},\infty)$, sequences of functions $\phi_n^{(\lambda)}\in L^2(\mathbb{H}^2)$ such that $\lVert (\Delta_{\mathbb{H}^2}-\lambda)\phi_n^{(\lambda)} \rVert_2 \to 0$ as $n\to\infty$. McKean uses approximations of conical functions. Another standard choice is to approximate the function $y\mapsto y^s$, where $\lambda = s(1-s)$, by multiplying it with a sequence of appropriate cut-off functions, see for instance the proof of Proposition 7.2 in the book by \linebreak \textcite{Borthwick_hyperbolic}. 
\end{proof}

There is more to say, but we will move on to the hyperbolic surfaces that this story is about, namely surfaces of finite area, starting with closed surfaces.

\subsubsection{Closed surfaces}

We start with the spectral theorem in the closed setting. In this text, the definition of a \textbf{closed} manifold will include the requirement that the manifold is connected, on top of being compact and not having boundary.
\begin{theo}[Spectral theorem] Let $X$ be a closed hyperbolic surface. Then the spectrum of $\Delta_X$ on $L^2(X)$ is discrete and consists entirely of eigenvalues
\[
0=\lambda_0(X) \quad < \quad \lambda_1(X) \quad \leq \quad  \lambda_2(X) \quad \leq  \quad \ldots .
\]
The corresponding eigenfunctions are smooth and an orthornomal basis of $L^2(X)$ can be extracted from the set of eigenfunctions.
\end{theo}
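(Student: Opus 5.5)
The plan is the standard compact-resolvent argument. Since $X$ is closed, it is complete, so $\Delta_X$ is essentially self-adjoint on $C^\infty(X)$ with a unique self-adjoint extension whose domain is the Sobolev space $H^2(X)$. The quadratic form
\[
q(f) = \int_X \lVert \mathrm{grad}\, f\rVert^2 \, d\mathrm{vol}
\]
is nonnegative. Hence $\Delta_X + 1$ is invertible, and $(\Delta_X+1)^{-1}$ maps $L^2(X)$ boundedly into $H^1(X)$, indeed into $H^2(X)$ by elliptic regularity.

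The key step is compactness of the inclusion $H^1(X)\hookrightarrow L^2(X)$ (Rellich--Kondrachov).

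\begin{itemize}
\item Cover $X$ by finitely many embedded hyperbolic disks; this is possible because $X$ is compact and the injectivity radius is bounded below.
\item Take a subordinate partition of unity. In local coordinates on each disk, the hyperbolic metric is uniformly comparable to the Euclidean one, for instance in the disk model $\mathbb{D}^2$.
\item This reduces the claim to the classical Euclidean Rellich theorem on a bounded domain.
\end{itemize}

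I expect this to be the only genuinely analytic input, and therefore the main obstacle. I would quote it rather than reprove it.

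Granting it, $(\Delta_X+1)^{-1}$ is a compact, self-adjoint, positive operator on $L^2(X)$. By the spectral theorem for compact self-adjoint operators, $L^2(X)$ has an orthonormal basis of eigenfunctions of this operator, with eigenvalues $\mu_k\to 0$ and $\mu_k>0$. The same functions are eigenfunctions of $\Delta_X$ with eigenvalues $\lambda_k=\mu_k^{-1}-1$. These form a discrete set tending to $\infty$ with finite multiplicities, and the spectrum consists only of these eigenvalues. Moreover $\lambda_k\ge 0$ because $\langle \Delta_X f,f\rangle = q(f)\ge 0$. Ordering them with multiplicity gives the list $\lambda_0\le\lambda_1\le\dots$.

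Smoothness follows from elliptic regularity. If $\Delta_X f=\lambda f$ with $f\in L^2$, then $f\in H^2$ locally, and bootstrapping gives $f\in H^{k}$ for all $k$. Hence $f\in C^\infty$ by Sobolev embedding, applied locally in the same charts as above.

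It remains to show that $\lambda_0=0$ and that it is simple, which gives the strict inequality $0<\lambda_1$.

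\begin{itemize}
\item Constants are in $L^2(X)$ since $X$ has finite area, and they satisfy $\Delta_X 1=0$. So $0$ is an eigenvalue, and it is the smallest one because all eigenvalues are nonnegative.
\item Conversely, suppose $\Delta_X f=0$. Integrating by parts on the closed manifold $X$ (no boundary terms) gives $\int_X\lVert\mathrm{grad}\, f\rVert^2=\langle\Delta_X f,f\rangle=0$, so $\mathrm{grad}\, f\equiv 0$.
\item Since $X$ is connected by our convention on closed manifolds, $f$ is constant.
\end{itemize}

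Thus the $0$-eigenspace is one-dimensional, and $\lambda_1(X)>0$.
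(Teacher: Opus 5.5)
Your proposal is correct: it is the standard compact-resolvent argument, namely Rellich compactness, the spectral theorem for the compact self-adjoint operator $(\Delta_X+1)^{-1}$, elliptic regularity, and integration by parts plus connectedness to show that $\lambda_0=0$ is simple. The paper gives no proof of its own; it only remarks that the statement holds for all closed Riemannian manifolds and points to the books of Chavel and Borthwick, and your argument is exactly that general one, with the hyperbolic metric entering only through the choice of local charts.
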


In the above, the zeroth eigenvalue corresponds to constant functions. Because our surface is assumed to be connected, the next eigenvalue $\lambda_1(X)$ is strictly positive. In fact, this theorem has nothing to do with hyperbolic surfaces and holds for closed Riemannian manifolds in general. Likewise, on all closed Riemannian manifolds, the number of eigenvalues below a given threshold grows according to \textbf{Weyl's law}, which on a closed hyperbolic surface states that
\[
N(\lambda) = \#\left\{k;\; \lambda_k(X) \leq \lambda\right\} \stackrel{\lambda\to\infty}\sim \frac{1}{4\pi}\cdot\mathrm{area}(X)\cdot \lambda.
\]
The Gau\ss--Bonnet theorem implies that the area of a closed and orientable hyperbolic surface of genus $g$ satisfies $\mathrm{area}(X)=4\pi\cdot (g-1)$.

The eigenvalues $\lambda_i(X)$ that lie in the interval $(0,\frac{1}{4})$ are called the \textbf{small eigenvalues} of $X$. \textcite{OtalRosas} proved that there are at most $2g-3$ small eigenvalues on a hyperbolic surface of genus $g$, confirming a conjecture by \textcite{Buser_small_ev}, who also proved that there are surfaces that realize this number of small eigenvalues.

\subsubsection{Non-compact surfaces of finite area}
The spectral theory of non-compact hyperbolic surfaces is more subtle. We refer the reader to the textbook by \textcite{Iwaniec} for a proper introduction. Here we will just describe the spectral decomposition of $L^2(X)$, without proof.

First, we briefly need to mention Eisenstein series. In the proof sketch of Lemma \ref{lem_spec_H2}, we have already noted that $x+iy \in \mathbb{H}^2\mapsto y^s$ is an eigenfunction of $\Delta_{\mathbb{H}^2}$ that does not lie in $L^2(\mathbb{H}^2)$. If $X=\Gamma\backslash\mathbb{H}^2$ is a non-compact hyperbolic surface of finite area, then we can use this function to build an eigenfunction for $\Delta_X$ as well. To do this, we note that cusps correspond one-to-one to conjugacy classes of maximal parabolic subgroups of $\Gamma$. For each of the cusps $\mathbf{c}$ of $\Gamma$, thought of as a representative in $\partial_\infty\mathbb{H}^2 = \mathbb{R}\cup\{\infty\}$ for a $\Gamma$-orbit of parabolic fixed points, we fix some element $h_{\mathbf{c}}\in\mathrm{PSL}(2,\mathbb{R})$ such that $h_{\mathbf{c}}(\mathbf{c}) = \infty$ and $h_{\mathbf{c}}$ conjugates the maximal parabolic subgroup corresponding to $\mathbf{c}$ to 
\[
\left\langle \left[\begin{array}{cc} 1 & 1 \\ 0 & 1 \end{array}\right]\right\rangle.
\] We will write $\Gamma_{\mathbf{c}}<\Gamma$ for the stabilizer of $\mathbf{c}$ in $\Gamma$. 

If we now fix a complex number $s$ with $\mathrm{Re}(s)>1$, then the \textbf{Eisenstein series}
\[
z\in\mathbb{H}^2\mapsto E_{\mathbf{c}}(z;s) = \sum_{[\gamma] \in \Gamma_{\mathbf{c}}\backslash\Gamma} \mathrm{Im}(h_{\mathbf{c}}\cdot \gamma \cdot z)^s
\]
is a $\Gamma$-invariant eigenfunction of $\Delta_{\mathbb{H}^2}$ of eigenvalue $s(1-s)$. Because of the invariance, it can be thought of as a function on $X$ as well. However, it's not an element of $L^2(X)$. To turn this function into an $L^2(X)$-function (but lose the fact that it's an eigenfunction), we take $\psi\in C_c(0,\infty)$ and define the \textbf{incomplete Eisenstein series}
\[
E_{\mathbf{c}}(z;\psi) = \sum_{[\gamma] \in \Gamma_{\mathbf{c}}\backslash\Gamma} \psi(\mathrm{Im}(h_{\mathbf{c}}\cdot \gamma \cdot z)).
\]
We will write $\mathcal{E}(X)\subset L^2(X)$ for the closed linear subspace spanned by all incomplete Eisenstein series.

In general, $\mathcal{E}(X)$ is a proper subspace of $L^2(X)$. A computation shows that its orthogonal complement is spanned by bounded functions $f\in C^\infty(X)$ whose zeroth Fourier coefficient in every cusp vanishes. Indeed, since $f(h_{\mathbf{c}}^{-1} (z+1)) = f(h_{\mathbf{c}}^{-1} z)$ for all $z\in\mathbb{H}^2$, we can write a Fourier series expansion for $f(h_{\mathbf{c}}^{-1}\;\cdot\;)$ with respect to the horizontal coordinate. The zeroth coefficient in this expansion is
\[
\widehat{f}_{\mathbf{c}}(y) = \int_0^1 f(h_{\mathbf{c}}^{-1}\cdot (x+iy))\; dx.
\]
We will write $\mathcal{C}(X)$ for the $L^2(X)$-closure of the set of bounded smooth functions with $\widehat{f}_{\mathbf{c}}= 0$ for all cusps $\mathbf{c}$ of $X$. We thus have the following orthogonal decomposition
\[
L^2(X) = \mathcal{C}(X) \stackrel{\perp}{\oplus} \mathcal{E}(X).
\]
The Laplacian preserves this decomposition and it turns out that the spectrum on $\mathcal{C}(X)$ is discrete. The corresponding eigenfunctions are called \textbf{cusp forms}.

\textcite{Selberg_Fourier_coeff} proved that Eisenstein series can be meromorphically continued to the whole $s$-plane. The resulting extensions do not have poles on the line $\{\mathrm{Re}(s)=\frac{1}{2}\}$ and the resulting function with parameter $\frac{1}{2}+it$ ($t\in\mathbb{R}$) is a generalized eigenfunction (so not a funtion in $L^2(X)$) with eigenvalue $\frac{1}{4}+t^2$. As a result, the spectrum of $\Delta_X$ on $\mathcal{E}(X)$ includes the ray $[\frac{1}{4},\infty)$ and this is continuous spectrum, i.e. for $\lambda \in [\frac{1}{4},\infty)$, $\left(\Delta_X-\lambda\right)\rvert_{\mathcal{E}(X)}$ is injective and has dense range, but is not surjective.

The final source of spectrum is given by residues of Eisenstein series. In the half plane $\{\mathrm{Re}(s)>\frac{1}{2}\}$, the meromorphically continued Eisenstein series have finitely many poles that all lie in $(\frac{1}{2},1]$. The residues at these poles in $(\frac{1}{2},1]$ are Laplacian eigenfunctions in $\mathcal{E}(X)$. The corresponding spectrum is called \textbf{residual spectrum}. This includes constant functions, that appear as residues at $s=1$. 
 
We summarize the above in a theorem:
\begin{theo}
Let $X$ be a non-compact hyperbolic surface of finite area. Then $L^2(X) = \mathcal{C}(X) \stackrel{\perp}{\oplus}\mathcal{E}(X)$ and this decomposition is preserved by $\Delta_X$. Moreover, 
\begin{itemize}
\item the spectrum of $\Delta_X$ on $\mathcal{C}(X)$ is discrete (and thus consists entirely of eigenvalues)
\item the spectrum of $\Delta_X$ on $\mathcal{E}(X)$ contains the ray $[\frac{1}{4},\infty)$, which is the continuous spectrum of $\Delta_X$, and finitely many eigenvalues in $[0,\frac{1}{4})$.
\end{itemize}
\end{theo}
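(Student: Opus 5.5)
The plan is to follow the classical Selberg--Roelcke route, as in the book by \textcite{Iwaniec}, in four steps: invariance of the decomposition, discreteness on $\mathcal{C}(X)$, spectral description of $\mathcal{E}(X)$, and finiteness of the residual spectrum.

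\emph{Step 1: invariance of the decomposition.} I will show that $\Delta_X$ preserves $\mathcal{C}(X)\stackrel{\perp}{\oplus}\mathcal{E}(X)$. Since $\Delta_X$ is essentially self-adjoint, it suffices to check that $\mathcal{E}(X)$ is invariant on a core. Unfolding shows $\Delta_X E_{\mathbf{c}}(\cdot;\psi) = E_{\mathbf{c}}(\cdot;L\psi)$ with $L\psi(y)=-y^2\psi''(y)$, and $L\psi$ again lies in $C_c(0,\infty)$ (taking $\psi$ smooth). Orthogonality to $\mathcal{C}(X)$ comes from unfolding as well:
\[
\langle f, E_{\mathbf{c}}(\cdot;\psi)\rangle = \int_0^\infty \widehat{f}_{\mathbf{c}}(y)\,\overline{\psi(y)}\,\frac{dy}{y^2},
\]
so the orthogonal complement of $\mathcal{E}(X)$ is exactly the space of functions whose zeroth Fourier coefficients vanish in every cusp. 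This identifies that complement with $\mathcal{C}(X)$.

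\emph{Step 2: discreteness on $\mathcal{C}(X)$.} I will show that the resolvent of $\Delta_X$ restricted to $\mathcal{C}(X)$ is compact; I expect this to be the main obstacle. Cut $X$ into a compact core $K$ and cusp neighbourhoods $C_t$. For $f\in\mathcal{C}(X)$ with vanishing zeroth coefficient, Parseval on each horocycle gives, in a cusp,
\[
\int_0^1 \Big|\frac{\partial f}{\partial x}\Big|^2 dx \geq 4\pi^2 \int_0^1 |f|^2\,dx ,
\]
so that
\[
\int_{C_T} |\nabla f|^2 \,dA \;\geq\; 4\pi^2 T^2 \int_{C_T}|f|^2 \,dA .
\]
Hence functions in the unit ball of the form domain, i.e. with $\lVert f\rVert_2^2+\langle \Delta_X f,f\rangle\le 1$, have uniformly small $L^2$ mass in $C_T$ for $T$ large. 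Combining this with Rellich compactness on the compact part $K\cup(\text{truncated cusps})$ yields compactness of the embedding, and therefore discrete spectrum on $\mathcal{C}(X)$. An alternative I would fall back on is to show that the truncated automorphic kernel of a smooth point-pair invariant is Hilbert--Schmidt on $\mathcal{C}(X)$.

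\emph{Step 3: spectrum on $\mathcal{E}(X)$.} Here I will use the meromorphic continuation of Eisenstein series due to \textcite{Selberg_Fourier_coeff}, which I take as given. Write $E_{\mathbf{c}}(\cdot;\psi)$ via Mellin inversion as $\frac{1}{2\pi i}\int_{\mathrm{Re}\,s=\sigma}\check\psi(s)E_{\mathbf{c}}(\cdot;s)\,ds$ with $\sigma>1$. Shifting the contour to $\mathrm{Re}(s)=\tfrac12$ picks up residues at the finitely many poles in $(\tfrac12,1]$. This decomposes $\mathcal{E}(X)$ into a finite-dimensional span of residues, which are eigenfunctions with eigenvalues $s(1-s)\in[0,\tfrac14)$, plus an integral of the $E_{\mathbf{c}}(\cdot;\tfrac12+it)$. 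Computing inner products via the Maass--Selberg relations shows that this integral part is unitarily equivalent to multiplication by $\tfrac14+t^2$ on $L^2(\mathbb{R}_{\ge0})^{\#\text{cusps}}$. That gives absolutely continuous spectrum exactly $[\tfrac14,\infty)$.

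\emph{Step 4: finiteness of the residual spectrum.} This follows from the finiteness of the poles in $(\tfrac12,1]$, which is part of the continuation theory (the scattering matrix is meromorphic and the poles there are real and isolated in a compact interval). The delicate analytic inputs, namely the continuation and the Maass--Selberg relations, are assumed rather than reproved.
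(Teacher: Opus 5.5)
Your outline is the classical route the paper defers to. The paper gives no argument of its own, only a pointer to Iwaniec's Theorems 4.7 and 7.3, and your Steps 3--4 are essentially Iwaniec's treatment of $\mathcal{E}(X)$. Step 2 takes a different, more elementary route. Iwaniec gets compactness on $\mathcal{C}(X)$ by subtracting the constant-term part of an automorphic kernel and showing the remainder is Hilbert--Schmidt; that is your fallback. You instead use the horocyclic Wirtinger inequality (the Roelcke argument of Lemma~\ref{lem_spec_thrice_punc}) together with Rellich on the truncated surface, in the style of Lax--Phillips. That argument is correct. However, it presupposes that the part of $\Delta_X$ in $\mathcal{C}(X)$ is a self-adjoint operator with form domain $H^1(X)\cap\mathcal{C}(X)$, so everything rests on Step 1.

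Step 1, as written, does not establish this, and that is the genuine gap. Invariance of a dense subspace of $\mathcal{E}(X)$ under $\Delta_X$ does not make $\mathcal{E}(X)$ a reducing subspace, even when $\Delta_X$ is essentially self-adjoint on $C_c^\infty(X)$. For example, for $-d^2/dx^2$ on $L^2(\mathbb{R})$, the space $C_c^\infty(0,\infty)$ is invariant and dense in $L^2(0,\infty)$, yet multiplication by $\mathds{1}_{(0,\infty)}$ does not preserve $H^2(\mathbb{R})$. Your unfolding gives $\Delta_X(\mathcal{C}(X)\cap\mathrm{Dom})\subset\mathcal{C}(X)$, but it does not show that the projection onto $\mathcal{E}(X)$ preserves the domain.

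You can close the gap with your own tools. Fix $\lambda=s(1-s)$ with $s>1$, and let $D$ be the span of the $E_{\mathbf{c}}(\cdot;\psi)$ with $\psi\in C_c^\infty(0,\infty)$. Suppose $u\in\mathcal{E}(X)$ is orthogonal to $(\Delta_X-\lambda)D$.
\begin{itemize}
\item Unfolding gives $-y^2\widehat{u}_{\mathbf{c}}''=\lambda\widehat{u}_{\mathbf{c}}$ in the sense of distributions on $(0,\infty)$, so $\widehat{u}_{\mathbf{c}}=a\,y^{s}+b\,y^{1-s}$.
\item Square-integrability high in the cusp forces $a=0$.
\item Unfolding $\lvert u\rvert^2$ gives $\int_\epsilon^1\lvert\widehat{u}_{\mathbf{c}}\rvert^2y^{-2}\,dy\le N_\epsilon\lVert u\rVert^2$. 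Here $N_\epsilon=O(1/\epsilon)$ bounds, uniformly in $z$, the number of cosets $\gamma\in\Gamma_{\mathbf{c}}\backslash\Gamma$ with $\mathrm{Im}(h_{\mathbf{c}}\gamma z)\geq\epsilon$.
\item Since $s>1$, the integral of $y^{2-2s}y^{-2}$ over $[\epsilon,1]$ grows like $\epsilon^{1-2s}$, which is faster than $\epsilon^{-1}$. Hence $b=0$.
\end{itemize}
So $u\perp\mathcal{E}(X)$, hence $u=0$, and $(\Delta_X-\lambda)D$ is dense in $\mathcal{E}(X)$. The bounded operator $(\Delta_X-\lambda)^{-1}$ maps this dense set onto $D$, so it preserves $\mathcal{E}(X)$. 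Being self-adjoint, it also preserves $\mathcal{C}(X)$, which is exactly the reduction you need. Alternatively, write the resolvent for $s>1$ as the integral operator with kernel $\sum_{\gamma\in\Gamma}r(s;z,\gamma w)$ and check directly that it maps incomplete Eisenstein series into $\mathcal{E}(X)$.

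A smaller point concerns Step 3. The Maass--Selberg relations give an isometry into $L^2(\mathbb{R}_{\geq 0})^{\#\mathrm{cusps}}$, but not obviously onto. To see that the whole ray $[\frac14,\infty)$ lies in the spectrum, it is simplest to note that $\chi_n(y)\,y^{\frac12+it}$, cut off high in a cusp, is itself an incomplete Eisenstein series. These give Weyl sequences inside $\mathcal{E}(X)$, as in Lemma~\ref{lem_spec_H2}.
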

For proofs, we refer the reader to the book by \textcite[Theorem 4.7 and Theorem 7.3]{Iwaniec}.

Selberg proved, using his trace formula (see the next section), that for congruence groups $\Gamma$ (groups that contain $\Gamma(N)$ for some $N$) the cuspidal spectrum, i.e. the spectrum of $\Delta_{\Gamma\backslash\mathbb{H}^2}$ on $\mathcal{C}(\Gamma\backslash\mathbb{H}^2)$, satisfies a Weyl law. It is however expected that this is specific to arithmetic surfaces and that in most moduli spaces of non-compact surfaces of finite area, a generic surface supports only finitely many cusp forms, see the survey by \textcite[Conjecture 1]{Sarnak_Spectra}.

Again, there is a lot more to say both in the closed case and the finite area case, we for instance haven't discussed the connection with the decomposition of $L^2(\Gamma\backslash G)$ into irreducible representations yet (see the book by \textcite{GelfandGraevPyatetskiiShapiro} for this), but the space here is too limited to go through everything.

\subsubsection{The Selberg trace formula}\label{sec_STF}

One of the ways of accessing the spectrum of a hyperbolic surface of finite area is through the Selberg trace formula. This formula provides an explicit connection between the geometry and the spectrum of a hyperbolic surface and can be thought of as a hyperbolic analogue of the Poisson summation formula (that relates the spectrum and the length spectrum of a flat torus). Because we will only use it for closed surfaces in this text, we will restrict to that case:
\begin{theo}[Selberg trace formula] Let $X$ be a closed hyperbolic surface, $f\in C_c^\infty(\mathbb{R})$ be an even function and $\widehat{f}(\xi) = \int_{-\infty}^\infty e^{-i\cdot \xi\cdot x} f(x)\, dx$ its Fourier transform. Then
\begin{align*}
\sum_{\lambda \in \mathrm{spec}(\Delta_X)} \widehat{f}\left(\sqrt{\lambda - \frac{1}{4}}\right) &=  \frac{\mathrm{area}(X)}{4\pi} \int_{-\infty}^\infty y \widehat{f}(y) \tanh(\pi y)\,dy \\ \notag
  &\quad + \sum_{\substack{\gamma \text{ primitive closed} \\ \text{geodesic on }X }} \ell(\gamma)\; \sum_{n\geq 1} \frac{1}{2\sinh(n\ell(\gamma)/2)} f(n\ell(\gamma)),
  \end{align*}
where $\mathrm{spec}(\Delta_X)$ denotes the spectrum of $\Delta_X$ on $L^2(X)$ and $\ell(\gamma)$ denotes the length of the geodesic $\gamma$.
\end{theo}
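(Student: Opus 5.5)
The plan is to compute the trace of an integral operator on $L^2(X)$ in two ways: once spectrally, once geometrically. Write $X=\Gamma\backslash\mathbb{H}^2$ with $\Gamma<\mathrm{PSL}(2,\mathbb{R})$ cocompact and torsion free.

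\textbf{Step 1: the kernel and its action on eigenfunctions.} Given the even test function $f\in C_c^\infty(\mathbb{R})$, I build a radial point-pair invariant $k(z,w)=K(d(z,w))$ on $\mathbb{H}^2$ through the Selberg/Harish-Chandra transform. Set $g=f$ and $Q(e^u+e^{-u}-2)=g(u)$, so that $K(\cosh d -1)$ is obtained from $Q$ by the Abel inversion $K(t)=-\frac{1}{\pi}\int_t^\infty \frac{dQ(s)}{\sqrt{s-t}}$. Since $f$ has compact support, $k$ is smooth and compactly supported in $d$. The key fact is that any radial kernel acts on every eigenfunction of $\Delta_{\mathbb{H}^2}$ by a scalar. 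This holds because averaging an eigenfunction over rotations about a point gives a radial eigenfunction, which is unique up to scaling. For eigenvalue $\lambda=\frac14+r^2$, I compute the scalar by testing on $y^{1/2+ir}$: it equals $\int_{\mathbb{H}^2}k(i,w)\,\mathrm{Im}(w)^{1/2+ir}\,dA(w)$. Changing variables and unwinding the Abel transform, this scalar is exactly $\widehat{f}(r)$. This is the step that fixes the normalisation $\widehat{f}(\sqrt{\lambda-1/4})$, including for small eigenvalues, where $r$ is imaginary.

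\textbf{Step 2: the spectral side.} Define the automorphic kernel $K_\Gamma(z,w)=\sum_{\gamma\in\Gamma}k(z,\gamma w)$. Because $k$ has compact support and $\Gamma$ is discrete, this sum is finite locally, so $K_\Gamma$ is smooth on $X\times X$. The operator $L$ with kernel $K_\Gamma$ satisfies $L\phi_j=\widehat{f}(r_j)\phi_j$ for each eigenfunction $\phi_j$ given by the Spectral theorem. Hence $K_\Gamma(z,w)=\sum_j \widehat{f}(r_j)\phi_j(z)\overline{\phi_j(w)}$. To justify integrating this on the diagonal, I note that $\widehat{f}$ decays faster than any polynomial, because $f$ is smooth. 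Combined with Weyl's law, this gives absolute and uniform convergence. Integrating over the diagonal then yields the left-hand side.

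\textbf{Step 3: the geometric side.} I compute $\int_F K_\Gamma(z,z)\,dA$ over a fundamental domain $F$ by grouping $\gamma$ into conjugacy classes.
\begin{itemize}
\item The identity contributes $\mathrm{area}(X)\,k(0)$. Writing $k(0)$ via spherical Fourier inversion with Plancherel density $\frac{1}{4\pi}r\tanh(\pi r)$ gives the first term on the right-hand side.
\item Every nontrivial element is hyperbolic, since $\Gamma$ is cocompact and torsion free. Such an element has the form $\gamma=\gamma_0^n$, with $\gamma_0$ primitive and centraliser $\langle\gamma_0\rangle$. Unfolding, the contribution of its conjugacy class is $\int_{\langle\gamma_0\rangle\backslash\mathbb{H}^2}k(z,\gamma z)\,dA$.
\item To evaluate this, I conjugate $\gamma$ to $z\mapsto e^{n\ell}z$ and integrate over the strip $1\le y\le e^{\ell}$. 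This gives $\ell(\gamma_0)\int_{-\infty}^{\infty}k\bigl(i,e^{n\ell}(x+i)\bigr)dx$. Recognising the Abel transform evaluates this as $\ell(\gamma_0)\,g(n\ell)/(2\sinh(n\ell/2))$.
\item Primitive conjugacy classes correspond to primitive closed geodesics, which gives the second term.
\end{itemize}

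\textbf{Expected obstacle.} The conceptual steps are standard. The main difficulty I expect is the bookkeeping of constants: the $2\pi$ factors in the Fourier convention $\widehat{f}(\xi)=\int e^{-i\xi x}f$, and the Plancherel density $\frac{1}{4\pi}r\tanh(\pi r)$ in the identity term. I would fix them by checking against a Gaussian, or by calibrating with Weyl's law. Everything else is justified by the compact support of $k$, which makes all sums locally finite.
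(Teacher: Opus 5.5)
The paper gives no proof of this theorem; it refers to the books of Iwaniec and Hejhal. Your outline is exactly the standard argument found there: a point-pair invariant built from the Selberg/Harish-Chandra transform, the spectral expansion of the automorphic kernel, and the split of $\Gamma$ into the identity and hyperbolic conjugacy classes. The strategy is sound. However, three details as written would give wrong constants or leave a step unjustified, and one identification needs care.

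\textbf{Normalisation of the kernel.} With $Q(e^u+e^{-u}-2)=g(u)$ and your Abel inversion, the kernel has to be $k(z,w)=K\bigl(|z-w|^2/(\mathrm{Im}(z)\,\mathrm{Im}(w))\bigr)=K\bigl(2\cosh d(z,w)-2\bigr)$, not $K(\cosh d-1)$. Only with this choice does Step 1 give $\int_{\mathbb{H}^2}k(i,w)\,\mathrm{Im}(w)^{1/2+ir}\,dA(w)=\int_{-\infty}^{\infty}e^{iru}Q(e^u+e^{-u}-2)\,du=\widehat{f}(r)$; substitute $x=\sqrt{y}\,\xi$, then $y=e^u$.

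With this normalisation no stray $2\pi$ appears. The identity term comes out as $\frac{1}{4\pi}\int r\widehat{f}(r)\tanh(\pi r)\,dr$ from $K(0)=-\frac{1}{2\pi}\int_0^\infty \frac{f'(u)}{\sinh(u/2)}\,du$ and $\int_0^\infty\frac{\sin(ru)}{\sinh(u/2)}\,du=\pi\tanh(\pi r)$. No calibration is needed.

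\textbf{The hyperbolic orbital integral.} Substitute $x=yt$ on the strip $1\le y\le e^{\ell}$. The $y$-integral produces $\ell(\gamma_0)$, and the remaining integrand is $k\bigl(t+i,e^{n\ell}(t+i)\bigr)$, not $k\bigl(i,e^{n\ell}(t+i)\bigr)$. This is not cosmetic. With $N=e^{n\ell}$ one has $2\cosh d\bigl(t+i,N(t+i)\bigr)-2=4\sinh^2(n\ell/2)(1+t^2)$. The substitution $s=4\sinh^2(n\ell/2)(1+t^2)$ then yields $Q\bigl(4\sinh^2(n\ell/2)\bigr)/\bigl(2\sinh(n\ell/2)\bigr)=f(n\ell)/\bigl(2\sinh(n\ell/2)\bigr)$, as required. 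Your integrand instead has $2\cosh d-2=Nt^2+4\sinh^2(n\ell/2)$, which gives $e^{-n\ell/2}f(n\ell)$.

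\textbf{Convergence on the diagonal.} Weyl's law and the rapid decay of $\widehat{f}$ control $\sum_j\lvert\widehat{f}(r_j)\rvert$. Uniform convergence of $\sum_j\widehat{f}(r_j)\phi_j(z)\overline{\phi_j(w)}$ also needs a polynomial bound on $\lVert\phi_j\rVert_\infty$. For instance, elliptic regularity and the Sobolev embedding $H^2(X)\hookrightarrow C^0(X)$ give $\lVert\phi_j\rVert_\infty\le C(1+\lambda_j)$. Then $\sum_j\lvert\widehat{f}(r_j)\rvert\,\lVert\phi_j\rVert_\infty^2<\infty$, so the series is continuous. It agrees with $K_\Gamma$ in $L^2(X\times X)$, hence everywhere, and can be integrated over the diagonal.

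\textbf{Orientation.} Primitive conjugacy classes of $\Gamma$ correspond to \emph{oriented} primitive closed geodesics. In a torsion-free $\Gamma$, $\gamma_0$ and $\gamma_0^{-1}$ are never conjugate, since a conjugating element would swap the endpoints of the axis and hence be an elliptic of order $2$. So each unoriented geodesic contributes twice. The factor $\frac{1}{2\sinh(n\ell/2)}$ in the statement is therefore the oriented convention, consistent with the density $\frac{4}{\ell}\sinh^2(\ell/2)$ used later in the paper. For unoriented geodesics the factor would be $\frac{1}{\sinh(n\ell/2)}$.
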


The left hand side, involving the spectrum of the Laplacian is usually called the \textbf{spectral side} of the formula and the right hand side is usually called the \textbf{geometric side}. We already see from the spectral side that small eigenvalues play a special role in the theory: they get mapped to the imaginary axis by $\lambda\mapsto \sqrt{\lambda-\frac{1}{4}}$.

For the case of surfaces of finite area, we refer to the books by \textcite{Iwaniec,Hejhal_vol1,Hejhal_vol2}.

\section{The bass note}

We now turn to the spectral invariant that this text is about: the bass note of a hyperbolic surface. If our hyperbolic surface $X$ is closed, then its bass note is simply $\lambda_1(X)$: the smallest non-zero eigenvalue of its Laplacian. If $X$ is non-compact but of finite area, then there is a choice to be made. We could consider the smallest non-zero eigenvalue (if indeed there is a non-zero eigenvalue) or the spectral gap -- the infimum of the positive spectrum of $\Delta_X$ -- instead. We will choose the latter in this text. In particular, if $X$ is of finite area but non-compact, we will abuse notation slightly and write
\[
\lambda_1(X) = \inf\Big(\mathrm{spec}(\Delta_X)-\{0\}\Big).
\]
This choice also implies that (since we're always assuming that $X$ is connected):
\begin{equation}\label{eq_var_char}
\lambda_1(X) = \inf\left\{ \frac{\int_X \lVert \nabla f \rVert^2}{\int_X f^2};\; f\in C^\infty_c(X) \text{ such that }\int_X f =0\right\}.
\end{equation}

\subsection{A measure of connectivity}

This a priori analytical invariant contains a lot of geometric information about the surface. Indeed, it can be seen as a measure of connectivity of the surface $X$. It's for instance related to the \textbf{Cheeger constant} of $X$:
\[
h(X) = \inf\left\{ \frac{\ell(\partial Y)}{\mathrm{area}(Y)};\; \begin{array}{c}
Y\subset X \text{ a subsurface with smooth} \\
\text{boundary and }\mathrm{area}(Y) \leq \mathrm{area}(X)/2
\end{array} \right\}.
\]
This number measures how hard it is to cut the surface in two. Figure \ref{pic_cheeger} shows an example of what a surface with a small Cheeger constant might look like: it contains a big subsurface $Y$ that is connected to the rest of the surface by a small interface $\partial Y$. 

\begin{figure}[ht]
\begin{center}
\begin{overpic}{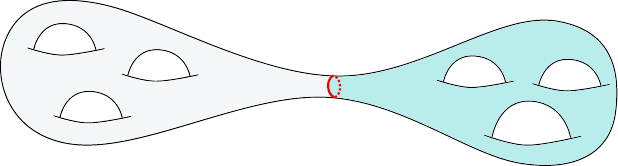}
\put(52,16){$\partial Y$}
\put(73,7.5){$Y$}
\end{overpic}
\caption{A surface with small Cheeger constant}\label{pic_cheeger}
\end{center}
\end{figure}

Its connection to the bass note is given by the following theorem due to \textcite{Cheeger,Buser_note}, which in the case of hyperbolic surfaces reads:
\begin{theo}[Cheeger--Buser inequality]\label{thm_CheegerBuser}
Let $X$ be a hyperbolic surface of finite area. Then
\[
\frac{h(X)^2}{4} \leq \lambda_1(X) \leq 2\;h(X) + 10\;h(X)^2.
\]
\end{theo}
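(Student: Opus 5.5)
The statement has two halves of quite different nature, so I would prove them separately: the lower bound (Cheeger) by a coarea/co-area-type argument, the upper bound (Buser) by building an explicit test function and plugging it into the variational characterization \eqref{eq_var_char}.

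\emph{Lower bound.} The starting point is an eigenfunction $f$ for $\lambda_1(X)$, or, if $\lambda_1(X)$ is bottom of continuous spectrum, a near-minimizer $f\in C_c^\infty(X)$ with $\int_X f=0$ from \eqref{eq_var_char}.
\begin{itemize}
\item Choose a median $m$ of $f$, so that $\{f>m\}$ and $\{f<m\}$ both have area at most $\mathrm{area}(X)/2$. Write $f_+=\max(f-m,0)$ and $f_-=\max(m-f,0)$.
\item Since $\int f=0$, we have $\int (f-m)^2\ge\int f^2$. Also $\int\lVert\nabla f\rVert^2=\int\lVert\nabla f_+\rVert^2+\int\lVert\nabla f_-\rVert^2$.
\item It therefore suffices to prove $\int\lVert\nabla g\rVert^2\ge \frac{h^2}{4}\int g^2$ for $g=f_\pm$, which are supported on sets of area at most half.
\item Apply the coarea formula to $g^2$: the superlevel sets $\{g^2>t\}$ (smooth for a.e.\ $t$ by Sard, after slight smoothing) have area at most $\mathrm{area}(X)/2$. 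Their boundary length is therefore at least $h$ times their area, giving
\[
\int\lVert\nabla g^2\rVert=\int_0^\infty \ell(\{g^2=t\})\,dt\ge h\int g^2 .
\]
\item Cauchy--Schwarz gives $\int\lVert\nabla g^2\rVert\le 2\lVert g\rVert_2\lVert\nabla g\rVert_2$, which yields the claim.
\end{itemize}
The only care needed is approximating $g$ by smooth functions with regular level sets. For non-compact $X$, compact support of the near-minimizers makes everything finite.

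\emph{Upper bound.} This is Buser's argument, and it uses curvature $-1$ (a lower Ricci bound). The plan:
\begin{itemize}
\item Take a subsurface $Y$ nearly realizing $h(X)$, with $\mathrm{area}(Y)\le\mathrm{area}(X)/2$, and let $r>0$ be a parameter.
\item Define the test function as a function of the signed distance to $\partial Y$: equal to $1$ on $Y$ outside an $r$-neighbourhood of $\partial Y$, equal to $-c$ on the complement outside the $r$-neighbourhood, and linear in the distance across the collar. Choose $c$ so that $\int f=0$; in the non-compact case, also truncate high in the cusps.
\item The gradient is of size about $1/r$, supported on the tubular neighbourhood $N_r(\partial Y)$.
\item Hence $\int\lVert\nabla f\rVert^2\lesssim r^{-2}\,\mathrm{area}(N_r(\partial Y))$, while $\int f^2\gtrsim \mathrm{area}(Y)-\mathrm{area}(N_r(\partial Y))$.
\end{itemize}

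The main obstacle is bounding $\mathrm{area}(N_r(\partial Y))$ by a constant times $\ell(\partial Y)$. The naive comparison in curvature $-1$ gives $\ell(\partial Y)\sinh(r)$-type growth, which is fine for bounded $r$. But an arbitrary near-minimizer $Y$ may have a wiggly boundary, for which the tube area is not controlled.

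To handle this, I would first regularize $Y$, as Buser does. The regularized set should have boundary consisting of finitely many curves of controlled geodesic curvature, for instance replacing $Y$ by a union of balls or by a set whose boundary has curvature at most $1$. This replacement should change the ratio $\ell(\partial Y)/\mathrm{area}(Y)$ only by a bounded factor. The tool is a local isoperimetric inequality in hyperbolic discs, which says that within a disc of radius $1$, cutting costs length comparable to area.

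With the tube bound $\mathrm{area}(N_r)\le C(r)\,\ell(\partial Y)$ in hand, take $r$ of order $1$ when $h$ is large and optimize $r$ when $h$ is small. This gives $\lambda_1\le C_1h+C_2h^2$. Tracking constants, which I would not attempt to optimize on a first pass, is what produces the specific values $2$ and $10$.
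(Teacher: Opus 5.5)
The paper gives no proof of this theorem; it only quotes it from Cheeger and Buser, so your proposal has to stand on its own. Your lower bound does. It is Cheeger's argument (median, splitting into $f_\pm$, coarea for $g^2$, Cauchy--Schwarz), and it is correct. One small correction to your remark on the non-compact case. When $m\neq 0$, one of $f_\pm$ equals the constant $\lvert m\rvert$ near the cusps, so the relevant superlevel sets are non-compact. What makes the argument work is that these sets still have compact smooth boundary and that $\mathrm{area}(X)<\infty$; compact support plays no role.

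The upper bound, however, is not proved. Everything rests on modifying a near-minimizing $Y$ so that $\mathrm{area}(N_r(\partial Y))\le C(r)\,\ell(\partial Y)$, while $\ell(\partial Y)/\mathrm{area}(Y)$ worsens by at most a bounded factor. You only say this \emph{should} be possible ``as Buser does''. This step is precisely the substance of Buser's theorem: his local isoperimetric inequality in balls, plus a covering argument controlling the area of the zone near $\partial Y$. Moreover, even granting this step with an unspecified loss factor, the scheme only yields $\lambda_1\le C_1h+C_2h^2$. No bookkeeping recovers the explicit constants $2$ and $10$ of the statement, which come from Buser's explicit estimates.

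Two points of the sketch are also off. First, curvature of $\partial Y$ is not what breaks the tube bound. Covering a boundary component of length $L\geq r$ by $\lceil L/r\rceil$ balls of radius $3r/2$ gives an area bound $C(r)L$, however wiggly the component is. What breaks the bound are components much shorter than $r$. Tiny holes scattered through $Y$ at mutual distance about $2r$ change $\ell(\partial Y)/\mathrm{area}(Y)$ arbitrarily little, yet their $r$-tubes fill a fixed proportion of $Y$. These must be removed without violating $\mathrm{area}(Y)\le\mathrm{area}(X)/2$. Second, your choice of $r$ is backwards. The test function is only useful when $C(r)\,h$ is small, so for large $h$ you need $r\lesssim 1/h$, and this is what produces the $h^2$ term. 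For small $h$, $r$ of order $1$ is the right choice, and it produces the linear term.
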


This inequality also implies that, whenever $(g,n)\notin\{(0,3),(1,1)\}$, then
\[
\inf_{X\in\mathcal{M}_{g,n}} \lambda_1(X) = 0. 
\]
Indeed, under our assumption on $(g,n)$, an orientable surface of genus $g$ with $n$ punctures contains a separating simple closed curve $\alpha$ that is not homotopic to a point or a puncture. So, using the construction from Section \ref{sec_pants_dec}, we can build a family of hyperbolic metrics $X_t$ on that surface such that $\ell_{X_t}(\alpha)=t$. It however follows from the Gau\ss--Bonnet theorem that the areas of the subsurfaces bounded by $\alpha$ do not depend on the choice of metric and so, letting $t$ tend to $0$, we obtain a family of metrics whose bass note tends to $0$. This fact can also be proven directly by using the variational characterization \eqref{eq_var_char} of $\lambda_1(X)$. We refer the reader to the paper by \textcite{SchoenWolpertYau} for more on the relation between short separating curves and small eigenvalues.

It's also known that, when $X$ is closed, a lower bound on $\lambda_1(X)$ and the \textbf{systole} of $X$ (the length of the shortest closed geodesic) implies an upper bound on the diameter $\mathrm{diam}(X)$ of $X$. The sharpest version of this bound, due to \textcite{Magee_letter}, states that for all $\delta\geq 0$ and $\varepsilon>0$, there exists a constant $C_{\varepsilon,\delta}>0$ such that
\[
\mathrm{diam}(X) \leq \frac{2}{1-\delta}\log(\mathrm{area}(X)) + \frac{4}{1-\delta}\log\log(\mathrm{area}(X)) + C_{\varepsilon,\delta}
\]
for all closed hyperbolic surfaces $X$ with $\lambda_1(X)\geq \frac{1-\delta^2}{4}$ and $\mathrm{systole}(X) \geq \varepsilon$. A similar bound can be derived for the diameter of the thick part of a non-compact hyperbolic surface of finite area. This is (up to a constant factor) the smallest diameter one could hope for: due to the exponential area growth, the diameter of a surface is also at least logarithmic; see the articles by \textcite{Bavard,BudzinskiCurienPetri_diameter,Mathien_diameter} for more details. Magee proved his result using an effective version of a theorem of \textcite{Ratner}, due to \textcite{Matheus} that implies that the exponential rate of mixing of the geodesic flow goes up as $\lambda_1$ goes up, which in itself is an indication that $\lambda_1$ is a measure of connectivity.

\subsection{The highest possible bass note} 

Since we now know that $\lambda_1(X)$ can be arbitrarily small in most moduli spaces, the next natural question is how large $\lambda_1(X)$ can be. In the non-compact case, our definition implies that it's at most $\frac{1}{4}$. In the closed case, this is not necessary. For example, there are very sharp numerical estimates available for the bass note of the Bolza surface and its bass note is well above $\frac{1}{4}$. Indeed,  \textcite{StrohmaierUski} proved that
\[
\lambda_1(X_{\mathrm{Bolza}}) = 3.838887 \pm 10^{-6}.
\]
It is expected that this is the closed hyperbolic surface with the highest possible bass note:
\begin{conj}
\begin{itemize}
\item[(a)] The Bolza surface uniquely maximizes the bass note among closed and orientable hyperbolic surfaces.
\item[(b)] The Klein quartic uniquely maximizes the bass note among closed and orientable hyperbolic surfaces of genus $3$.
\end{itemize}
\end{conj}
Using a method called the \textbf{conformal bootstrap}, that we shall not get into in this text, \textcite{KravchukMazacPal,Bonifacio} prove global upper bounds on the bass notes of surfaces of a fixed genus. These bounds are very close to the numerical values of the bass notes of the Bolza surface and the Klein quartic in genus $2$ and $3$ respectively, whence the conjecture.

There are multiple ways to obtain a global upper bound on $\lambda_1(X)$ for all $X\in\mathcal{M}_g$. \textcite{Huber_eigenwert} was the first to prove that
\begin{equation}\label{eq_Huber_bound}
\lim_{g\to\infty} \sup\left\{\lambda_1(X);\; X\in\mathcal{M}_g\right\} \leq \frac{1}{4}.
\end{equation}
This result was generalized by \textcite{Cheng} with a slightly different argument that also leads to the more explicit bound
\[
\lambda_1(X) \leq \frac{1}{4} + \left(\frac{4\pi}{\mathrm{diam}(X)}\right)^2 \leq \frac{1}{4}+\left(\frac{4\pi}{\log(g-1)}\right)^2
\]
Using bounds on eigenvalues of hyperbolic disks, due to \textcite{Gage}, the error term can be improved by a factor of $4$. An alternative argument, using the Selberg trace formula, due to \textcite[Theorem 8.3]{FortierBourquePetri_LPbounds}, asymptotically improves the error term by a further factor of $4$. Of course, $\frac{1}{4}$ shows up in all of these bounds because it's the bottom of the spectrum of the Laplacian on the hyperbolic plane.

The above leads to the question of whether there are sequences of closed hyperbolic surfaces whose first eigenvalue tends to the bottom of the spectrum of the hyperbolic plane. Indeed, \textcite{Buser_bipartition} conjectured that such sequences exist (correcting an earlier conjecture, also by \textcite{Buser_cubicgraphs}).

We have already mentioned in the introduction that various models of random surfaces produce sequences of closed hyperbolic surfaces whose first eigenvalue tends to $\frac{1}{4}$, thus confirming Buser's conjecture, and we will spend a large part of this text discussing these results. Random constructions are currently the only known way of building such sequences. 

Now that we know that we can approximate $\frac{1}{4}$ it's also time to become greedier and ask whether we can construct surfaces whose spectral gap is consistently at least $\frac{1}{4}$. It might be possible to do this with random surfaces by getting better control on the distribution of $\lambda_1$ around $\frac{1}{4}$, in a similar way to how this was recently done for regular graphs by \textcite{HuangMcKenzieYau} (see Sections \ref{sec_graphs} and \ref{sec_why_poly}).

\subsection{Selberg's conjecture}

Congruence surfaces are another well known source of surfaces with a uniform spectral gap. We'll briefly discuss the non-compact case $\Gamma(N)\backslash\mathbb{H}^2$. Selberg observed that these surfaces do not have non-trivial residual spectrum (see for instance \linebreak Theorem 11.3 in the book by \textcite{Iwaniec} for a proof), so understanding the spectral gap of these surfaces comes down to understanding the eigenvalues associated to cusp forms. Selberg's conjecture states that these have an optimal spectral gap.

\begin{conj}[Selberg's eigenvalue conjecture]
If we write $\lambda_1^{\mathrm{cusp}}(X)$ for the minimal eigenvalue of $\Delta_X$ on $\mathcal{C}(X)$, then we have
\[
\lambda_1^{\mathrm{cusp}}\Big(\Gamma(N) \backslash \mathbb{H}^2\Big) \geq \frac{1}{4} \quad \text{ for all }N.
\]
\end{conj}
The current best general bound, due to \textcite{KimSarnak}, is that 
\[
\lambda_1^{\mathrm{cusp}}(\Gamma(N)\backslash\mathbb{H}^2) \geq \frac{975}{4096}=0.238\ldots.
\]
The conjecture has also been verified for small $N$, by \textcite[Section 6]{Huxley} for $N \leq 18$ and by \textcite{BookerStrombergsson}
for all square free $N < 857$ and by \textcite{BookerLeeStrombergsson} for all $N\leq 226$. For the relation between Selberg's conjecture and number theoretic questions, we refer the reader to the survey by \textcite{Sarnak_Spectra}.

Huxley's proof for $N\leq 18$ is based on an idea originally due to \textcite{Roelcke}. We will give a proof using this argument for the surface we discussed in Section \ref{sec_thrice_punc}: the thrice-punctured sphere $\Gamma(2)\backslash\mathbb{H}^2$.
\begin{lemm}\label{lem_spec_thrice_punc}
We have
\[
\lambda_1^{\mathrm{cusp}}(\Gamma(2)\backslash\mathbb{H}^2) \geq \frac{2+\sqrt{2}}{4}=0.85355\ldots .
\]
\end{lemm}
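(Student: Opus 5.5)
The plan is to use Roelcke's argument. It combines the variational characterization \eqref{eq_var_char}, restricted to $\mathcal{C}(X)$, with the fact that a cusp form has vanishing zeroth Fourier coefficient in every cusp. Let $X=\Gamma(2)\backslash\mathbb{H}^2$. By density it suffices to take a bounded smooth $f$ with $\widehat f_{\mathbf c}=0$ for all three cusps $\mathbf c\in\{\infty,0,1\}$ and prove
\[
\int_X \lVert\nabla f\rVert^2 \;\geq\; \frac{2+\sqrt2}{4}\int_X f^2 .
\]
In the upper half plane, $\lVert\nabla f\rVert^2\,d\mu = (f_x^2+f_y^2)\,dx\,dy$ and $d\mu = dx\,dy/y^2$. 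So the Dirichlet energy is Euclidean, while the $L^2$ mass carries the weight $y^{-2}$.

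\textbf{Step 1: decompose $X$ into cusp regions.} Use the fundamental domain from Section \ref{sec_thrice_punc}, the ideal quadrilateral with vertices $-1,0,1,\infty$. Cut it into three pieces $P_\infty,P_0,P_1$, each consisting of the points ``closest'' to the corresponding cusp, for instance via the Ford/Dirichlet-type partition determined by equal-height horocycles. Conjugating by $h_{\mathbf c}$, each piece becomes a region $\{|x|\le 1,\ y\geq \phi_{\mathbf c}(x)\}$ under the cusp stabilizer $\langle z\mapsto z+2\rangle$ (the cusps of $\Gamma(2)$ have width $2$). The symmetry of the thrice punctured sphere permuting the cusps lets me treat all three pieces identically. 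The key quantities are the lowest height $y_0$ reached by a piece in its own cusp coordinates, and the shape of its lower boundary.

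\textbf{Step 2: Fourier estimate in each piece.} In cusp coordinates $f(x+2,y)=f(x,y)$ and the zeroth coefficient vanishes, so the frequencies are $\pi k$ with $k\neq0$. Parseval then gives, on each full horizontal period,
\[
\int_{-1}^1 f_x^2\,dx\geq \pi^2\int_{-1}^1 f^2\,dx .
\]
Where the piece contains complete horizontal segments, this yields
\[
\int f_x^2\,dx\,dy\;\geq\;\pi^2\int y^2 f^2\,\frac{dx\,dy}{y^2}\;\geq\; \pi^2 y_0^2 \int f^2\,d\mu .
\]
For the $f_y$ term I would add the one-dimensional Hardy-type inequality from the proof of Lemma \ref{lem_spec_H2}, applied on vertical rays; it contributes an extra amount of the type $\frac14\int f^2\,d\mu$. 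Summing over the three pieces gives a lower bound of the shape $\min_{\mathbf c}\bigl(\pi^2 y_0^2+\text{(Hardy term)}\bigr)$.

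\textbf{Step 3: the obstacle.} The hard part is Step 1 together with the bookkeeping of Step 2. The pieces do not consist of full horizontal periods near their lower boundary, so I would apply the Fourier bound only on full strips and handle the leftover curved parts either by slightly overlapping, unfolded strips (accepting a bounded overlap multiplicity) or by choosing the cut heights optimally. I would then optimize over the cutting height and the split between the $f_x$ and $f_y$ contributions. I expect this optimization, with boundary height $y_0^2$ involving $\tfrac12$ and the diagonal geometry of the half-circles $|z\mp\tfrac12|=\tfrac12$, to produce exactly the constant $\frac{2+\sqrt2}{4}$. Getting the constant right rather than a weaker one is the main difficulty. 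If the first attempt gives a smaller bound, I would refine the partition before changing the method.
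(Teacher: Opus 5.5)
\textbf{There is a genuine gap: the decisive step is left open, and the partition you set up cannot be completed.} Below height $1$ (in the coordinates of its own cusp), a piece of a \emph{partition} of the fundamental domain has only incomplete horizontal periods. On an incomplete period, the vanishing of $\widehat f_{\mathbf c}$ gives no Wirtinger inequality at all. No choice of cut heights repairs this, because full cusp neighbourhoods with disjoint interiors never cover $X$.

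The missing idea, which is the paper's argument, is to replace the partition by a \emph{covering}. Take three embedded horodisks $C_{\mathbf c}$ of equal area, each a full cusp neighbourhood $\{y\ge t,\ \lvert x\rvert\le 1\}$ in its own coordinates, such that almost every point of $X$ lies in at most two of them. Your Step 2 then applies on each $C_{\mathbf c}$ and gives
\[
2\int_X\lVert\nabla f\rVert^2\,d\mu\;\geq\;\sum_{\mathbf c}\int_{C_{\mathbf c}}\lVert\nabla f\rVert^2\,d\mu\;\geq\;\pi^2t^2\sum_{\mathbf c}\int_{C_{\mathbf c}}f^2\,d\mu\;\geq\;\pi^2t^2\int_X f^2\,d\mu .
\]
The first inequality uses the multiplicity bound and the last uses the covering property.

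A proof must therefore fix $t$ and verify these two properties. In the coordinates of $\infty$, the other two horodisks are the Euclidean disks of diameter $\frac1t$ tangent to $\mathbb{R}$ at $0$ and at $\pm1$. The three horocycles pass through the centre $\frac12+\frac{\sqrt3}{2}i$ of the ideal triangle $(0,1,\infty)$ exactly when $t=\frac{\sqrt3}{2}$. For this $t$ the horodisks are embedded (since $t\ge\frac12$), they cover $X$, and they meet three at a time only at the two triangle centres. The chain then gives $\lambda_1^{\mathrm{cusp}}\ge\frac{3\pi^2}{8}$.

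So there is nothing to optimise, and no need to land exactly on $\frac{2+\sqrt2}{4}$, since the lemma is only a lower bound. In the paper's sketch this number is simply $\frac12\bigl(1+\frac1{\sqrt2}\bigr)$: the height it uses for $C$, divided by the multiplicity $2$, after a cruder Fourier estimate than your (correct) one with frequencies $\pi k$. Note that an area count, $3\cdot\frac2t\ge2\pi$, forces $t\le\frac3\pi$ for any such covering, so the height has to be computed, not guessed.

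\textbf{Second, drop the Hardy term.} The inequality $\int(f')^2\,dy\ge\frac14\int f^2\,\frac{dy}{y^2}$ in the proof of Lemma \ref{lem_spec_H2} needs $f$ to have compact support in $y\in(0,\infty)$, so that the boundary terms in the integrations by parts vanish. On a cusp region $\{y\ge t\}$ there is no boundary condition at $y=t$. For example, $\cos(\pi x)$ on $\{y\ge t\}$ is $2$-periodic, has zero mean in $x$, lies in $L^2(d\mu)$, and has vanishing $y$-derivative. So only the $f_x$ estimate is available, and it is all you need.
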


\begin{proof}The geometric input we need is a convenient covering of the thrice punctured sphere by horodisks. We'll draw lifts of these horodisks in the hyperbolic plane, like in Figure \ref{pic_horodisks}.

\begin{figure}[ht]
\begin{center}
\begin{overpic}{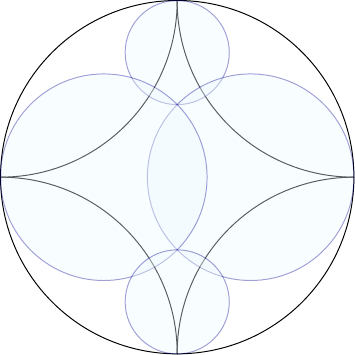}
\put(48,101.5){$1$}
\put(-4.5,48){$0$}
\put(44,-6){$-1$}
\put(101,48){$\infty$}
\end{overpic}
\caption{Four horodisks covering a fundamental domain for the thrice punctured sphere}\label{pic_horodisks}
\end{center}
\end{figure}

We'll take three horodisks of the same area that together cover the thrice punctured sphere and are minimal with respect to this requirement. The reason that the disks incident to $1$ and $-1$ in Figure \ref{pic_horodisks} are smaller is that these two points lie in the same parabolic orbit of $\Gamma(2)$. The horocycles are completely determined by the intersection pattern in the picture and the fact that the lengths of the two shorter horocycles are half the lengths of the longer horocycles. 

A computation yields that the horodisk incident to $\infty$ lifts to 
\[
C = \left\{ z \in \mathbb{H}^2;\; \mathrm{Im}(z) \geq 1+\frac{1}{\sqrt{2}},\; -1 \leq \mathrm{Re}(z) \leq 1 \right\}.
\] 

Now let $f:\mathbb{H}^2\to \mathbb{C}$ be a cusp form, thought of as a $\Gamma(2)$-invariant function with 
\[
\int_{\mathcal{F}} f^2\; \frac{dx\;dy}{y^2} = 1 \quad \text{and} \quad \int_{\mathcal{F}} \lVert \nabla f \rVert^2 \;  \frac{dx\;dy}{y^2} = \lambda_1^{\mathrm{cusp}}(\Gamma(2)\backslash\mathbb{H}^2),
\]
where $\mathcal{F}$ is our standard fundamental domain: the convex hull in $\mathbb{H}^2$ of $\{-1,0,1,\infty\}$. We choose some $g_1,g_2\in\mathrm{PSL}(2,\mathbb{R})$ such that $C\cup g_1 C\cup g_2 C$ projects to our horodisk covering. We observe from Figure \ref{pic_horodisks} that almost every point in $\mathcal{F}$ is covered by at most two translates of $C$, so
\begin{align*}
2 \cdot \lambda_1^{\mathrm{cusp}}(\Gamma(2)\backslash\mathbb{H}^2) & \geq  \int_{C} \lVert \nabla f\rVert^2 \frac{dx\;dy}{y^2} +   \int_{C} \lVert \nabla f\circ g_1^{-1} \rVert^2 \frac{dx\;dy}{y^2}+   \int_{C} \lVert \nabla f\circ g_2^{-1} \rVert^2 \frac{dx\;dy}{y^2}.
\end{align*}
Now let $\varphi$ be one of $f$, $f\circ g_1^{-1}$ and $f\circ g_2^{-1}$. We have $\varphi(z)=\varphi(z+2)$ for all $z\in\mathbb{H}^2$, so we can write a Fourier series
\[
\varphi(x+iy) = \sum_{n\neq 0}a_n(y) e^{inx},
\]
where we have used that $f$ is a cusp form to exclude the constant term. This means that
\begin{multline*}
\int_C \lVert \nabla \varphi \rVert^2 \; \frac{dx\;dy}{y^2}  \geq \int_C \left\lvert\frac{\partial \varphi}{\partial x}\right\rvert^2 \frac{dx\; dy}{y} = \int_C \left\lvert\sum_{n\neq 0} in\cdot a_n(y)e^{inx}\right\rvert^2 \frac{dx\; dy}{y} \\
= \sum_{n\neq 0}\int_{1+\frac{1}{\sqrt{2}}}^\infty n^2 \lvert a_n(y)\rvert^2 \frac{dy}{y} \geq (1+\frac{1}{\sqrt{2}}) \sum_{n\neq 0}\int_{1+\frac{1}{\sqrt{2}}}^\infty  \lvert a_n(y)\rvert^2 \frac{dy}{y^2}  \geq   (1+\frac{1}{\sqrt{2}}) \int_{\mathcal{F}} \lvert \varphi \rvert^2 \; \frac{dx\; dy}{y^2}.
\end{multline*}
Putting everything together yields the bound $
2\cdot \lambda_1^{\text{cusp}}(\Gamma(2)\backslash\mathbb{H}^2) \geq (1+\frac{1}{\sqrt{2}}).
$
\end{proof}

At this point, one might remember the Cheeger--Buser inequality (Theorem \ref{thm_CheegerBuser}) and hope that we could use that to prove Selberg's conjecture. This means that we would need to prove that $h(\Gamma(N)\backslash\mathbb{H}^2)\geq 1$. The Cheeger constant of the hyperbolic plane in fact satisfies $h(\mathbb{H}^2)=1$, so we'd hope to prove that the Cheeger constant of a congruence surface always exceeds that of the hyperbolic plane. \textcite{BrooksZuk} proved that this is too much to hope for and that indeed, $h(\Gamma(N)\backslash\mathbb{H}^2) \leq 0.4402\ldots $, for all large enough $N$. \textcite{BudzinskiCurienPetri_cheeger} showed that a similar thing happens for all closed surfaces, that is, $h(X) \leq \frac{2}{\pi} + o_{g\to\infty}(1)$ for all $X\in\mathcal{M}_g$. Both of these results use random decompositions of the surface.

\section{The bulk of the spectrum}\label{sec_BS_convergence}

In this text we'll be looking at surfaces with a near optimal spectral gap, i.e. surfaces whose bass note approximates the bottom of the spectrum of the hyperbolic plane. In fact, these will be sequences of surfaces whose entire Laplacian spectrum approximates the spectrum of the hyperbolic plane.

Before we get to near optimal spectral gaps, we mention a weaker notion of convergence to the hyperbolic plane, namely Benjamini--Schramm convergence. This is a geometric notion of convergence that was adapted from graph theory (see the article by \textcite{BenjaminiSchramm}) by \textcite{7samurai} and \textcite{NamaziPankkaSouto}. We say that a sequence $(X_n)_{n\geq 0}$ of hyperbolic surfaces of finite area \textbf{Benjamini--Schramm converges} to $\mathbb{H}^2$, if for all $R>0$,
\[
\lim_{n\to\infty}
\frac{
\mathrm{area}\Big\{ x\in X_n;\;\mathrm{inj}_x(X_n) \geq R\Big\} 
}
{
\mathrm{area}(X_n)
}
=1
\]
In the above, $\mathrm{inj}_x(X_n)$ denotes the injectivity radius of $X_n$ at $x$. For instance, any sequence of closed surfaces whose systole tends to infinity converges to $\mathbb{H}^2$.

The sequence $(X_n)_n$ is said to be \textbf{uniformly discrete} if the minimial injectivity radius of $X_n$ does not tend to $0$ as $n\to\infty$ (in particular, the surfaces $X_n$ eventually need to be closed).

\begin{figure}[ht]
\begin{center}
\includegraphics[scale=1]{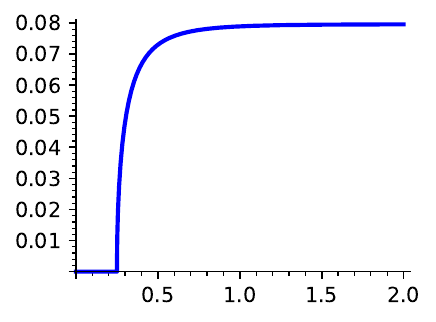}
\caption{The density of the Plancherel measure.}\label{plot_plancherel}
\end{center}
\end{figure}

It follows from the work of \textcite{7samurai} that if $(X_n)$ is a uniformly discrete sequence of hyperbolic surfaces, then for all $[a,b]\subset \mathbb{R}_{\geq 0}$:
\[
\frac{\#\{k;\lambda_k(X_n)\in[a,b]\}}{\mathrm{area}(X_n)} \quad \stackrel{n\to\infty}{\longrightarrow} \quad \frac{1}{4\pi} \int_a^b \mathds{1}_{[\frac{1}{4},\infty)}(y) \tanh\left(\pi\sqrt{y -\frac{1}{4}}\right)\; dy.
\] 

The measure $\frac{1}{4\pi} \mathds{1}_{[\frac{1}{4},\infty)}(y) \tanh\left(\pi\sqrt{y -\frac{1}{4}}\right)dy$ is the spectral density of the Laplacian on the hyperbolic plane and is called the \textbf{Plancherel measure} (see Figure \ref{plot_plancherel}). \textcite{GavelliKamp} proved that without uniform discreteness, the spectral convergence above need not hold.

Since the support of the Plancerel measure is exactly the spectrum of the Laplacian on the hyperbolic plane, we have that the number of small eigenvalues of any uniformly discrete sequence of hyperbolic surfaces $(X_n)_n$ that Benjamini--Schramm converges to the hyperbolic plane is $o(\mathrm{area}(X_n))$ as $n\to\infty$. This is of course much weaker than a near optimal spectral gap. In fact, one can construct sequences of surfaces that Benjamini--Schramm converge to $\mathbb{H}^2$ that don't have a uniform spectral gap at all, for instance using degree two covers of surfaces with large systoles (see for instance Section 5 in the article by \textcite{BuserSarnak}).
 
\section{Regular graphs}\label{sec_graphs}

Before we get to random surfaces, we discuss the case of graphs. Indeed, there is an analogous problem in graph theory to that of (optimal) spectral gaps for hyperbolic surfaces. This is the theory of (optimal) expander graphs, which is a well-studied subject in graph theory. For a general overview on expander graphs, we refer to the survey by \textcite{HooryLinialWigderson} and the book by \textcite{Kowalski}. We will discuss some recent highlights that have inspired the work on surfaces. 

There are also multiple ways to use a sequence of expander graphs to build expander surfaces (sequences of surfaces with growing area and a uniform spectral gap), for instance through covers or triangulations (see the works by \textcite{Buser_cubicgraphs,Brooks_transfer,Burger} and the notes by \textcite[Appendix]{Breuillard} for an introduction), but these estimates can't yield sharp bounds, so this is not the route we will take below. 

\subsection{Definitions}
First of all, given a locally finite graph $G$ with vertex set $V_G$ and edge set $E_G$, $A_G$ will denote its \textbf{adjacency matrix}, the matrix on $V_G\times V_G$ given by
\[
(A_G)_{vw} = \left\{ \begin{array}{ll}
\#\{\text{edges between }v \text{ and }w\} & \text{if } v\neq w \in V_G \\
2\cdot \#\{\text{loops based at }v\} & \text{if } v=w \in V_G.
\end{array}\right.
\]
In particular, our graphs are allowed to have loops and multiple edges. Since some of the graphs we will talk about have an infinite vertex set, we will think of this matrix as an operator acting on $\ell^2(V_G)$.

We will restrict to \textbf{$d$-regular graphs}: graphs such that every vertex is incident to $d$ edges (in which every loop is counted as $2$ edges). In this case the \textbf{graph Laplacian} is the operator given by
\[
L_G = d\cdot\mathrm{Id}-A_G.
\]
In many ways, this is a combinatorial analogue of the Laplacian on a Riemannian manifold. Of course, in the setting of regular graphs, any information on the $L_G$ can be extracted from $A_G$ and vice versa. As is common in graph theory, we will phrase things in terms of $A_G$ in what follows.

The adjacency matrix of a graph $G$ is a bounded and self-adjoint operator, so its spectrum is real. If $G$ is a connected $d$-regular graph on $n<\infty$ vertices, then $A_G$ is a Perron--Frobenius matrix with eigenvalues
\[
d = \lambda_1(G) > \lambda_2(G) \geq \ldots \geq \lambda_n(G).
\]
The top eigenvalue $d$ corresponds to constant vectors. $G$ is called \textbf{bipartite} if we can bipartition the vertex set $V_G = X \sqcup Y$ such that $(A_G)_{vw}=0$ whenever both $v\in X$ and $w\in X$ or both $v\in Y$ and $w\in Y$. If $G$ is finite, this happens if and only if $\lambda_1(G) = - \lambda_n(G)$, which in turn is equivalent to the entire spectrum being symmetric: $\lambda_k(G) = -\lambda_{n-k}(G)$ for $k=0,1,\ldots,n/2$.

\subsection{Expansion}

In terms of the adjacency matrix, the analogue of the bass note of a hyperbolic surface is the spectral gap $\lambda_1(G)-\lambda_2(G)=d-\lambda_2(G)$. So if we want to know how high the bass note can be, we need to know how \textbf{small} $\lambda_2(G)$ can be. Since the spectrum in this setting is finite, we could also ask for a bound on the moduli of all the eigenvalues. That is, we instead ask how small $\lambda(G) = \max\{\lambda_2(G),\lvert \lambda_n(G)\rvert \}$ can be. For example, it's the latter quantity that appears in the equidistribution rate of a random walk on a finite graph (see for instance Theorem 3.2 in the survey by \textcite{HooryLinialWigderson}). However, the former relates to the Cheeger constant of the graph in a similar way to the Cheeger--Buser inequality (Theorem \ref{thm_CheegerBuser}). A sequence of $d$-regular graphs $(G_n)_n$ is called an \textbf{expander sequence} if $\#V_{G_n} \stackrel{n\to\infty}{\to} \infty$ and there exists $\varepsilon>0$ such that $\lambda_2(G_n) \leq d-\varepsilon$ for all $n$.

In the case of $d$-regular graphs, the number $\frac{1}{4}$ is replaced by $2\sqrt{d-1}$. Indeed, the latter is the spectral radius of the adjacency matrix of the infinite $d$-regular tree -- the universal cover of every $d$-regular graph. The analogue to the Huber bound \eqref{eq_Huber_bound} is called the Alon--Boppana bound in graph theory and states
\[
\lim_{n\to\infty}\min\left\{ \lambda_2(G);\; \begin{array}{c} G \text{ a }d\text{-regular graph} \\ \text{on } n \text{ vertices} \end{array}\right\} \geq 2\sqrt{d-1}.
\]
Similar error terms to the hyperbolic case are known in this setting as well (see the articles by \textcite{Nilli,Friedman_geometric_aspects}). 

A $d$-regular graph such $G$ such that $\lambda(G) \leq 2\sqrt{d-1}$ is called a \textbf{Ramanujan graph}. Such graphs cannot be bipartite. A bipartite $d$-regular graph $G$ such that $\lambda_2(G)\leq 2\sqrt{d-1}$ is called a \textbf{bipartite Ramanujan graph}. Infinite sequences of Ramanujan graphs and bipartite Ramanujan graphs of fixed degree were first discovered by \textcite{LubotzkyPhillipsSarnak,Margulis}. These graphs are of degree $q+1$, where $q$ is a prime power (orginially, $q$ was a prime, but \textcite{Morgenstern} extended it to prime powers). It took a long time before the remaining degrees were achieved as well. \textcite{MarcusSpielmanSrivastava}, partially proving a conjecture by \textcite{BiluLinial}, iterated covers of degree $2$ to construct the first infinite sequences of bipartite Ramanujan graphs in the remaining degrees. More recently, using random regular graphs, \textcite{HuangMcKenzieYau} proved that there exist infinite sequences of Ramanujan graphs in every degree.

\subsection{Friedman's theorem and strong convergence}\label{sec_alon_conj}

This last result fits in a long line of results on the spectral gaps of random regular graphs. \textcite{Alon} conjectured that, if $G_{n,d}$ denotes a random $d$-regular graph on $n$ ($n$ even if $d$ is odd) vertices, then for every $\varepsilon>0$,
\[
\mathbb{P}\left( \lambda(G_n) \leq 2\sqrt{d-1}+\varepsilon \right) \stackrel{n\to\infty}{\longrightarrow} 1.
\]
This conjecture was first proved by \textcite{Friedman_Alon_conjecture}. By now, several proofs of Friedman's theorem exist. Indeed, also \textcite{HuangMcKenzieYau} prove a version of this theorem, with sufficient control over the distribution of $\lambda(G_n)-2\sqrt{d-1}$ in order to be able prove that $\mathbb{P}(\lambda(G_n)>2\sqrt{d-1})$ does not tend to zero as $n\to\infty$, thus proving the existence of arbitrarily large Ramanujan graphs in every degree.

We will not go into the proof by Huang, McKenzie and Yau. Instead, we will briefly discuss Friedman's original proof and more recent work by \textcite{Bordenave,BordenaveCollins} that generalizes the theorem and proves a strong convergence result.

Before we do any of this, we need to say what we mean by a random $d$-regular graph. There are multiple contiguous\footnote{Two models of finite random graphs $G_n^1$ and $G_n^2$ are called \textbf{contiguous} if and only if for any set $A$ of isomorphism classes of finite graphs, $\lim_{n\to\infty}\mathbb{P}(G_n^1\in A)=1$ if and only if $\lim_{n\to\infty}\mathbb{P}(G_n^2\in A)=1$.} models for this. We will restrict to the case of even degree and use the \textbf{permutation model} in what follows. We will write $\mathfrak{S}_n$ for the symmetric group on $[n]=\{1,\ldots,n\}$ and define the random $2d$-regular graph $G_{2d,n}$ as follows. We draw $\pi_1,\ldots,\pi_d\in\mathfrak{S}_n$ independently uniformly at random. The vertex set of $G_{2d,n}$ is $[n]$ and the number of edges between $i,j\in [n]$ equals
\[
\Big(A_{G_{2d,n}}\Big)_{ij} = \#\{k;\; \pi_k(i) =j\} + \#\{k;\; \pi_k(j) = i\}.
\] 
See Figure \ref{pic_permutation_graph} for an example. We observe that $G_{2d,n}$ is not necessarily connected. It however follows from work by \textcite{Dixon} that the probability that it is connected tends to $1$ as $n\to\infty$.

\begin{figure}[ht]
\begin{center}
\begin{overpic}{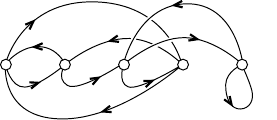}
\put(5,16){$1$}
\put(27,16){$2$}
\put(48.5,16){$3$}
\put(69.5,16){$4$}
\put(91,16){$5$}
\end{overpic}
\caption{The $4$-regular graph on $[5]$ corresponding to the permutations $(1\;2\;3\;4)(5)$ and $(1\;4\;2)(3\;5)$ from $\mathfrak{S}_5$. We've drawn arrows to indicate what the permutations do, but the graph is thought of as undirected.}\label{pic_permutation_graph}
\end{center}
\end{figure}
Other well known models are the uniform model (picking a uniformly random isomorphism class of $d$-regular graphs on $n$ vertices) and the configuration model (taking $n$ vertices with $d$ half-edges sticking out and randomly pairing the half-edges). When the degree is even, all three of these models are contiguous (see for instance the survey by \textcite[Section 4]{Wormald} and references therein). In particular, if Alon's conjecture holds for one of them, it holds for all of them.

\subsubsection{Friedman's proof}\label{sec_Friedman}

Friedman's proof uses the \textbf{trace method}, pioneered by \textcite{BroderShamir}. The idea is that the eigenvalues of $A_{G_{2d,n}}$ can be accessed by taking the trace of a high power:
\[
\mathrm{Tr}\left(A_{G_{2d,n}}^k\right) = d^k+\sum_{i=2}^n \lambda_i(G_{2d,n})^k.
\]
On the other hand, we can write
\[
\mathrm{Tr}\left(A_{G_{2d,n}}^k\right)  = \sum_{v=1}^n \#\Big\{\text{closed walks in }G_{2d,n} \text{ of length }k\text{ based at the vertex }v\Big\}.
\]
As such, one needs good estimates on the number of closed walks in a random graph. It follows from work by \textcite{Nica} that in the permutation model, the primitive length spectrum -- the multi-set of all lengths of primitive closed non-backtracking cycles, with multiplicity -- converges to a Poisson point process. This in particular implies that these random graphs Benjamini--Schramm converge to the $2d$-regular tree and that their normalized spectral measures converge to that of the tree, which is given by the Kesten--McKay law (see the articles by \textcite{Kesten,McKay}), the graph theoretic analogue of the Plancherel measure mentioned in Section \ref{sec_BS_convergence}. In particular, the number of eigenvalues outside of the interval $[-2\sqrt{2d-1},2\sqrt{2d-1}]$ is of size $o(n)$.

In order to prove that there are in fact none at all in the interval \linebreak $[-2\sqrt{2d-1}-\varepsilon,2\sqrt{2d-1}+\varepsilon]$ , Friedman does not directly work with closed walks but rather with non-backtracking walks, the graph theoretic analogue of closed geodesics. So we write
\[
\mathrm{Tr}\Big(A_{G_{2d,n}}^k\Big) = \sum_{l=0}^\infty f_k(l)\cdot \mathrm{Tr}\Big(P_{G_{2d,n}}^{(l)}\Big),
\]
where $P_{G_{2d,n}}^{(l)}$ is the $n\times n$ matrix in which $\Big(P_{G_{2d,n}}^{(l)}\Big)_{vw}$ equals the number of \linebreak non-backtracking walks of length $l$ from $v$ to $w$. This matrix is given by the recurrence relation
\[
P_{G_{2d,n}}^{(0)} = \mathrm{Id}, \quad P_{G_{2d,n}}^{(1)} = A_{G_{2d,n}},\quad P_{G_{2d,n}}^{(2)} =  A_{G_{2d,n}}^2 - d\cdot \mathrm{Id} 
\]
\[
\text{and } \quad P_{G_{2d,n}}^{(l+1)} = A_{G_{2d,n}} \cdot P_{G_{2d,n}}^{(l)}-(d-1)\cdot P_{G_{2d,n}}^{(l-1)} \text{ for }l\geq 1.
\]
In particular, $P^{(l)}_{G_{2d,n}} = p_l\Big(A_{G_{2d,n}}\Big)$, where $p_l$ is a polynomial of degree $l$ that can be expressed in terms of Chebycheff polynomials. The function $f_k:\mathbb{N}\to\mathbb{R}$ can be made explicit as well, we refer the reader to the article by \textcite{Friedman_Alon_conjecture} for details; we will just note that $f_k(l)\geq 0$, with equality when $l>k$, for all $l\in\mathbb{N}$. Friedman moreover proves that 
\[
f_k(l) \leq \sqrt{\frac{2d-1}{2d}} \cdot \frac{\Big(2\sqrt{2d-1}\Big)^k}{(d-1)^{l/2}} \quad \text{and} \quad f_k(0) \leq \Big(2\sqrt{2d-1}\Big)^k.
\]

We next analyze the expected traces of the non-backtracking matrices. We have
\begin{lemm}\label{lem_trace_expansion}
There exist functions $g_0,g_1,\ldots :\mathbb{N}\to \mathbb{R}$ such that for any fixed $d,r\in\mathbb{N}$ there exists a constant $c>0$ such that
\[
\left\lvert \mathbb{E}\Big(\mathrm{Tr}\Big(P^{(l)}_{G_{2d,n}}\Big)\Big) - g_0(l) - \frac{g_1(l)}{n} - \ldots - \frac{g_{r-1}(l)}{n^{r-1}} \right\rvert \leq c\cdot \frac{l^{4r+2}\cdot (2d-1)^l}{n^r} \quad \text{for all }n\geq 1.
\]
\end{lemm}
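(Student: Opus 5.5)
The plan is to expand the expected trace as a sum over combinatorial types of closed non-backtracking words, and to show that each type contributes a rational function of $n$ with a Laurent expansion in $1/n$.

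\textbf{Step 1: reduce to words.} A closed non-backtracking walk of length $l$ based at $v$ is the same as a cyclically reduced word $w=x_1\cdots x_l$ in the free group $F_d=\langle a_1,\ldots,a_d\rangle$, with $x_j\in\{a_i^{\pm1}\}$, such that $v$ is a fixed point of $w(\pi_1,\ldots,\pi_d)$. Here the letter $a_i$ acts by $\pi_i$ and $a_i^{-1}$ by $\pi_i^{-1}$. (Loops and the factor $2$ in the diagonal entries are handled by this same convention.) So
\[
\mathbb{E}\,\mathrm{Tr}\,P^{(l)} = \sum_{w} \mathbb{E}\big[\#\mathrm{Fix}\,w(\pi_1,\ldots,\pi_d)\big],
\]
where the sum runs over the at most $2d(2d-1)^{l-1}$ cyclically reduced words of length $l$.

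\textbf{Step 2: expand each expected fixed-point count.} For a fixed word, I would enumerate the trajectories $v=v_0,v_1,\ldots,v_l=v$ in $[n]$ by the graph $\Gamma$ they trace out. The vertices of $\Gamma$ are the distinct $v_j$, and its directed edges are labelled by $i\in[d]$. By non-backtracking, $\Gamma$ is a quotient of the $l$-cycle in which each vertex has at most one outgoing and at most one incoming $i$-edge for each $i$, and it is folded so that the walk stays non-backtracking. Given $\Gamma$ with $V$ vertices and $e_i$ edges of colour $i$, the number of injective labellings of its vertices is $(n)_V=n(n-1)\cdots(n-V+1)$. The probability that a uniform $\pi_i$ extends $e_i$ prescribed values is $1/(n)_{e_i}$. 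Hence
\[
\mathbb{E}\,\#\mathrm{Fix}\,w = \sum_{\Gamma} \frac{(n)_{V(\Gamma)}}{\prod_i (n)_{e_i(\Gamma)}}.
\]
Each summand is $n^{\chi(\Gamma)}$ times a rational function of $n$ that is analytic in $1/n$ and has coefficients polynomial in $V,e_i\le l$. Since $\Gamma$ is connected and contains a cycle, $\chi(\Gamma)=V-E\le 0$. Expanding each summand in powers of $1/n$ and collecting terms of the same order defines $g_j(l)$.

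\textbf{Step 3: estimate the remainder.} This is the main obstacle: the error must be bounded uniformly, with the factor $(2d-1)^l$ appearing only once. Graphs with $\chi(\Gamma)=-m$ number at most $l^{O(m)}$ per word. Indeed, such a $\Gamma$ is determined by the at most $2(m+1)$ branch positions at which the walk opens a new edge or rejoins old ones. The Taylor remainder of $(n)_V/\prod_i(n)_{e_i}$ after $r-m$ terms is $O\big((l^2/n)^{r-m}\big)$, for example by writing $(n)_k=n^k\prod_j(1-j/n)$ and taking logarithms. This bound holds when $l^2\le n$. When $l^2>n$ the claimed bound is trivial: crudely, $\mathrm{Tr}\,P^{(l)}\le n(2d-1)^l$, and the partial sum of the $g_j$ terms is of the same size. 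The total error is therefore $\sum_{m\le r}l^{O(m)}(l^2/n)^{r-m}n^{-m}$ times the number of words, which is at most $(2d-1)^l$ up to constants.

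Tracking the exponents carefully should give $l^{4r+2}$: each unit of $-\chi$ costs about $l^4$, from choosing branch points and lengths, plus $l^2$ overall. The delicate part is making this counting of branch data sharp. It is the combinatorics of Broder--Shamir and Friedman, often organised via ``tangles''.
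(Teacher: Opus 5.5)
The survey states this lemma without proof; it is quoted from Friedman. Your strategy is the standard one behind his result and it can be made to work: expand over words, then over the graphs that walks trace out, using $\mathbb{E}\,\#\mathrm{Fix}\,w=\sum_\Gamma (n)_V/\prod_i(n)_{e_i}$. Two things must be fixed first.

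\textbf{Step 1 is false as written.} $\mathrm{Tr}(P^{(l)})$ counts closed non-backtracking walks, and such a walk may end by traversing its first edge in reverse. It therefore corresponds to a \emph{reduced} word of length $l$ that need not be cyclically reduced. For example, $w=a_1a_2a_1^{-1}$ contributes $\#\mathrm{Fix}(\pi_2)$, which has expectation $1$. The correct identity is $\mathbb{E}\,\mathrm{Tr}(P^{(l)})=\sum_w\mathbb{E}\,\#\mathrm{Fix}\,w(\pi)$ over all $2d(2d-1)^{l-1}$ reduced words of length $l$. Your sum over cyclically reduced words is instead the trace of the $l$-th power of the non-backtracking edge operator. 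Nothing else changes: a non-backtracking path of positive length from $v_0$ back to $v_0$ cannot lie in a tree, so still $\chi(\Gamma)\le 0$.

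\textbf{Step 3 is only heuristic, and your ``$l^4$ per unit of $-\chi$'' would break the case split.} With that count, the graphs with $m\ge r$ are summable only when $l^4\ll n$. But for $l^2\le n\le l^4$ the trivial bound $n(2d-1)^l$ is not $\le l^{4r+2}(2d-1)^ln^{-r}$. Use the count you almost gave instead. A trajectory class is determined by the $m+1$ times at which a new edge lands on an already visited vertex, together with the earlier time of that vertex; every other step is forced by the word or creates a new vertex. So at most $l^{2(m+1)}$ graphs have $\chi=-m$.

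Now write each summand as $n^{-m}F_\Gamma(1/n)$ with $F_\Gamma(t)=\prod_{j<V}(1-jt)\big/\prod_i\prod_{j<e_i}(1-jt)$. Coefficientwise $F_\Gamma(t)\preceq(1-lt)^{-(d+1)l}$. Hence the $t^k$-coefficient is $O_k(l^{2k})$, and for $l^2\le n/2$ the Taylor tail after $K$ terms is $O_K((l^2/n)^K)$.

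All graphs with $m\ge r$ must go entirely into the error; your sum stops at $m\le r$. For $l^2\le n/2$ one has $F_\Gamma(1/n)\le e^d$, so they contribute $O\big(\sum_{m\ge r}l^{2m+2}n^{-m}\big)=O(l^{2r+2}n^{-r})$. This is where you need $l^2\le n/2$ rather than $l^2\le n$.

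For $n<2l^2$, the partial sums are not ``of the same size'' as $n(2d-1)^l$. What you need is the coefficient bound $|g_j(l)|\le C_j\,l^{2j+2}(2d-1)^l$. It gives $|g_j(l)|n^{-j}\le C\,l^{2r+2}(2d-1)^ln^{-r}$, and also $n\le 2^{r+1}l^{2r+2}n^{-r}$.

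Together this proves the lemma, in fact with $l^{2r+2}$ in place of $l^{4r+2}$. Your direct expansion does not give an explicit description of the $g_j$. That description is what the next step, deciding whether the $g_j$ are Ramanujan functions, actually needs. Friedman's grouping of walks into finitely many ``types'' (traced graphs with degree-two vertices suppressed, plus edge lengths) is designed to provide it.
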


The next step in the strategy is to analyze the functions $g_j$. Indeed, it would suffice to prove that these functions are what Friedman calls \textbf{Ramanujan functions} with principal term $2d(2d-1)^{l-1}$ for $g_0$ and $0$ for all the others. Here, a Ramanujan function is a function $g:\mathbb{N}\to\mathbb{R}$ such that there exists a polynomial $p:\mathbb{N}\to\mathbb{R}$ and a constant $c>0$ such that
\[
\lvert g(l) - (2d-1)^l \cdot p(l) \rvert \leq c\cdot l^c \cdot (2d-1)^{l/2}.
\]
The function $l\mapsto (2d-1)^l \cdot p(l)$ is called the \textbf{principal term} of $g$. We now have:

\begin{lemm}
Let $g_0,g_1,\ldots, g_{r-1}$ be as in Lemma \ref{lem_trace_expansion}. If these are Ramanujan functions and the principal term of $g_m$ equals $2d(2d-1)^{l-1}$ when $m=0$ and $0$ otherwise, then,
\[
\mathbb{E}\Big( \sum_{i=2}^n\lambda_i(G_{2d,n})^k \Big) \leq \Big(2\sqrt{2d-1}\Big)^k \cdot \left(\frac{d}{2\sqrt{2d-1}}\right)^{\frac{k}{r+1}}\cdot \left(1+ \frac{c \cdot \log\log(n)}{\log(n)}\right)^k
\] 
for all $k\leq \frac{(r+1)\log(n)}{\log(2d/(2\sqrt{2d-1})}$.
\end{lemm}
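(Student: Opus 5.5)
We bound the expected sum through the trace. By the expansion of $\mathrm{Tr}(A^k)$ in non-backtracking traces,
\[
\mathbb{E}\Big(\sum_{i=2}^n\lambda_i^k\Big) = \sum_{l=0}^{k} f_k(l)\,\mathbb{E}\,\mathrm{Tr}\big(P^{(l)}\big) - d^k.
\]
Here $k$ is taken even, so the left hand side is a sum of non-negative terms. We subtract from this the contribution that the top eigenvalue makes to the non-backtracking expansion. Since $P^{(l)} = p_l(A)$ and the constant vector is an eigenvector of $A$ with eigenvalue $d$, the identity $d^k = \sum_l f_k(l)\,p_l(d)$ holds. Here $p_l(d)$ is the number of non-backtracking walks of length $l$ from a fixed vertex, i.e.\ $2d(2d-1)^{l-1}$ for $l\geq1$; note that $A$ is $2d$-regular, so "$d$" above should be read as the degree.

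This gives an exact identity
\[
\mathbb{E}\Big(\sum_{i\geq2}\lambda_i^k\Big) = \sum_{l\leq k} f_k(l)\Big(\mathbb{E}\,\mathrm{Tr}\,P^{(l)} - 2d(2d-1)^{l-1}\Big).
\]
We now insert Lemma \ref{lem_trace_expansion}. For each $l$, the bracket equals
\[
\big(g_0(l)-2d(2d-1)^{l-1}\big) + \sum_{m=1}^{r-1} \frac{g_m(l)}{n^m} + O\!\left(\frac{l^{4r+2}(2d-1)^l}{n^r}\right).
\]
Under the Ramanujan hypothesis with the stated principal terms, every $g_m(l)$ minus its principal term is $O(l^c(2d-1)^{l/2})$. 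Thus every term except the last is bounded by $C\,l^c(2d-1)^{l/2}$.

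The two kinds of term are then summed against $f_k(l)$ using Friedman's bound $f_k(l)\leq \big(2\sqrt{2d-1}\big)^k (2d-1)^{-l/2}$, up to constants.

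For the Ramanujan part, the bound gives
\[
\sum_{l\leq k}\big(2\sqrt{2d-1}\big)^k (2d-1)^{-l/2}\, l^c (2d-1)^{l/2} \leq \big(2\sqrt{2d-1}\big)^k\, k^{c+1}.
\]
For the remainder part, it gives
\[
\big(2\sqrt{2d-1}\big)^k \sum_{l\leq k} l^{4r+2}\,\frac{(2d-1)^{l/2}}{n^r} \lesssim \big(2\sqrt{2d-1}\big)^k\, k^{4r+3}\,\frac{(2d-1)^{k/2}}{n^r}.
\]
Since $(2d-1)^{1/2}$ is comparable to $d/\sqrt{2d-1}$ up to a constant factor, this is at most $(2\sqrt{2d-1})^k\,\big(d/(2\sqrt{2d-1})\big)^k\, k^{C}/n^r$, after adjusting constants inside the ratio.

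The main step, and the expected obstacle, is to interpolate between these two regimes. The trivial bound is $\sum\lambda_i^k\leq n\,d^k$. Combined with the remainder estimate, we want the factor $\rho^{k}/n^r$ to turn into $\rho^{k/(r+1)}$, where $\rho = d/(2\sqrt{2d-1})$. The exponent $\frac{k}{r+1}$ appears exactly because we restrict to $k\leq (r+1)\log n/\log\rho$: then $\rho^{k}\leq n^{r+1}$, so
\[
\frac{\rho^k}{n^r} = \rho^{k}\, n^{-r} \leq \rho^{k}\,\rho^{-kr/(r+1)} = \rho^{k/(r+1)}.
\]
The polynomial factors $k^{C}$, with $k\asymp \log n$, are then absorbed as $(1+c\log\log n/\log n)^k$. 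Indeed, $k^{C} = e^{C\log k}$ and $\log k\approx \log\log n$, so $k^{C}\leq (1+c\log\log n/\log n)^k$ for $k$ of order $\log n$.

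For small $k$, the required inequality follows more crudely, because the right hand side is then at least $(2\sqrt{2d-1})^k$ times a constant, and $c$ may be enlarged. The care needed is in tracking the constants, namely the precise base $2d/(2\sqrt{2d-1})$ versus $\sqrt{2d-1}$. If necessary, I would use the sharper form of Friedman's estimate on $f_k$, rather than crude bounds, so that the geometric sum over $l$ produces exactly the stated base.
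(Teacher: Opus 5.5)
Your route matches the paper's: restrict to the orthogonal complement of the constants, expand in non-backtracking traces, cancel $p_l(2d)=2d(2d-1)^{l-1}$ against the principal term of $g_0$, and bound what is left using $f_k$. However, the remainder step fails as written.

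With the pointwise bound $f_k(l)\lesssim(2\sqrt{2d-1})^k(2d-1)^{-l/2}$ you get $(2\sqrt{2d-1})^k k^{4r+3}(2d-1)^{k/2}/n^r$. Here $(2d-1)^{k/2}=\rho^k\big(\frac{2d-1}{d}\big)^k$ with $\rho=\frac{2d}{2\sqrt{2d-1}}$. A constant discrepancy in the base is a factor exponential in $k$, hence a power of $n$ when $k\asymp\log n$. It cannot be absorbed into $(1+c\log\log n/\log n)^k$. ``Adjusting constants inside the ratio'' just proves a different, weaker inequality: at the top of the range you get $\rho^{k/(r+1)}\big(\frac{2d-1}{d}\big)^k$ rather than $\rho^{k/(r+1)}$. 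The pointwise bound is lossy near $l=k$, where it gives about $2^k$ while $f_k(k)=1$.

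The missing idea is already in your first step. Since $f_k(l)\geq 0$, $\sum_l f_k(l)p_l(2d)=(2d)^k$ and $p_l(2d)\geq(2d-1)^l$, you have
\[
\sum_{l\leq k} f_k(l)\,l^{4r+2}(2d-1)^l\leq k^{4r+2}(2d)^k=k^{4r+2}\big(2\sqrt{2d-1}\big)^k\rho^k .
\]
So the remainder is at most $c\,k^{4r+2}(2\sqrt{2d-1})^k\rho^k/n^r$, with the correct base. Your inequality $\rho^k/n^r\leq\rho^{k/(r+1)}$ then finishes that part. The base has to be $2d/(2\sqrt{2d-1})$ in both places of the statement; the $d/(2\sqrt{2d-1})$ in the displayed bound is a slip, and it is even less than $1$ for $d=2$.

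Second, Lemma~\ref{lem_trace_expansion} cannot hold at $l=0$, because $\mathrm{Tr}(P^{(0)})=n$. That term must be treated separately and contributes $f_k(0)(n-1)\leq n(2\sqrt{2d-1})^k$. This is below the right-hand side only when $\rho^{k/(r+1)}$ is essentially $n$, that is, for $k$ essentially at the upper limit of the allowed range, which is how the bound is used in the Markov argument.

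Your claim that small $k$ follows ``more crudely'' by enlarging $c$ is false. For $k=2$ the left-hand side is at least $2dn-4d^2$, while the right-hand side stays bounded in $n$, and $c\log\log n/\log n\to 0$ whatever $c$ is. So the inequality should be read, and proved, only for $k$ at the top of the range. There the three contributions combine to about $(2\sqrt{2d-1})^k\,n\,k^{4r+2}$: the $l=0$ term, the Ramanujan part $(2\sqrt{2d-1})^k k^{c+1}$, and the corrected remainder. The polynomial factor is then absorbed as you describe.
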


\begin{proof}[Proof sketch]
Being a polynomial in $A$, $P_{G_{2d,n}}^{(l)}$ has the same eigenspaces as $A$. As such, writing $\mathds{1}$ for the vector all of whose entries equal $1$,
\begin{multline*}
\mathbb{E}\Big( \sum_{i=2}^n\lambda_i(G_{2d,n})^k \Big) = \mathbb{E}\left( \mathrm{Tr}\left(\Big(A_{G_{2d,n}}\rvert_{\mathds{1}^{\perp}} \Big)^k \right)\right) \\
 = \sum_{l=0}^k f_k(l)\cdot \mathbb{E} \left( \mathrm{Tr}\left(P_{G_{2d,n}}^{(l)}\rvert_{\mathds{1}^{\perp}} \right)\right) = \sum_{l=0}^k f_k(l)\cdot \left(\mathbb{E} \left( \mathrm{Tr}\left(P_{G_{2d,n}}^{(l)}\right) \right) - p_l(2d)\right)
\end{multline*}
now we apply Lemma \ref{lem_trace_expansion} and observe that terms of the form $p_l(2d)=2d(2d-1)^{l-1}$ cancel against the principal term of $g_0(l)$. Combining the remainder with the estimate we have on $f_k(l)$ then implies the claim.
\end{proof}

If we could prove the hypothesis of the lemma above for all $r$, we would get Alon's conjecture by applying Markov's inequality to $\lambda(G_{2d,n})^k-(2\sqrt{2d-1}-\varepsilon)^k$, which, when $n$ is large enough, is a non-negative random variable because of the Alon--Boppana theorem. However, as Friedman proves, there exists a universal constant $C>0$ such that when $r>C\sqrt{d}\log(d)$, the hypothesis is violated. That is, the functions $g_i$ are not Ramanujan in this range. 

What prevents them from being Ramanujan is the presence of \textbf{tangles}; these are small connected subgraphs of negative Euler characteristic that cause the spectral gap to be small. These tangles appear with a relatively small probability (that in particular tends to $0$ as $n$ tends to infinity). However, these probabilities are large enough to ruin the expectations we are calculating. A large part of the work in Friedman's article goes into dealing with these tangles. In particular, the goal is to take the expectation over a subset of graphs that are still of asymptotic density $1$ but do not contain these problematic subgraphs. We will not go into details here, but just note that a similar problem appears in the work by Anantharaman and Monk (see Section \ref{sec_AM}).

\subsubsection{Representations, covers and strong convergence}\label{sec_bordenavecollins}

In this section, we will describe the generalization by \textcite{BordenaveCollins} of Friedman's theorem (which was in part proved using the strategy \textcite{Bordenave} developed for his new proof of Friedman's theorem). We will not sketch this proof, but we will describe the theorem statement, because it's used in the work by Hide--Magee that we present in Section \ref{sec_HM}. A new approach,  that we won't discuss in detail but plays an important role in the work by Hide--Macera--Thomas (see Section \ref{sec_HMT}), to the result by Bordenave--Collins has recently been found by \textcite{ChenGarzaVargasTroppvanHandel}.

We start with a group theoretic interpretation of the permutation model. We let $F_d$ denote a non-abelian free group of rank $d$. If $(x_1,\ldots,x_d)$ is a generating set for $F_d$  (which because of its cardinality is necessarily a free basis), then the map
\[
\varphi\in\mathrm{Hom}(F_d,\mathfrak{S}_n) \quad \longmapsto \quad (\varphi(x_1),\ldots,\varphi(x_d)) \in \Big(\mathfrak{S}_n\Big)^d
\]
is a bijection. In particular the choice of $d$ random permutations we used for our random graph also yields a random homomorphism $\varphi_n:F_d\to \mathfrak{S}_n$. Moreover, our graph $G_{2d,n}$ is nothing else than the \textbf{Schreier graph} for the action of $F_d$ on $[n]$ through $\varphi_n$. 

We can also understand the adjacency matrix in this language. If $\mathrm{U}(n)$ is the unitary group of $\mathbb{C}^n$ with its standard hermitian product and $\mathrm{perm}_n:\mathfrak{S}_n \to \mathrm{U}(n)$ the permutation action of $\mathfrak{S}_n$ on $\mathbb{C}^n$, then
\[
A_{G_{2d,n}} = \sum_{i=1}^d \mathrm{perm}\circ\varphi_n(x_i)+\mathrm{perm}\circ\varphi_n(x_i^{-1}).
\]
The representation $\mathrm{perm}_n$ splits into two irreducible representations
\[
\mathrm{perm}_n \simeq \mathrm{triv} \oplus \mathrm{std}_n,
\] 
where $\mathrm{triv}$ denotes the trivial representation and is obtained as the restriction of $\mathrm{perm}_n$ to $\mathbb{C}\cdot \mathds{1}\subset \mathbb{C}^n$, where $\mathds{1}$ denotes the vector with only $1$'s, and $\mathrm{std}_n:\mathfrak{S}_n \to \mathrm{U}(\mathbb{C}^n_0)$ is the restriction of $\mathrm{perm}_n$ to
\[
\mathbb{C}^n_0 = \left\{ v\in\mathbb{C}^n;\; \sum_{i=1}^n v_i =0 \right\}
\]
and is often called the \textbf{standard representation}.

This decomposition into irreducible representations also corresponds exactly to the decomposition of $\mathbb{C}^n$ into the trivial eigenspace of $A_{G_{2d,n}}$ and its orthogonal complement. Of course, this is not an accident. Indeed, the graph $G_{2d,n}$ can be thought of as a \linebreak (non-regular) cover of the wedge of $d$ circles $G_{2d,1}$. If we write $G_{2d,1} = F_d \backslash \mathcal{T}_{2d}$, where $\mathcal{T}_{2d}$ denotes the infinite $2d$-regular tree, then
\[
G_{2d,n} \simeq F_d\backslash\Big(\mathcal{T}_{2d}\times [n]\Big),
\]
where $F_d$ acts on the second factor through the map $\varphi_n:F_d\to\mathfrak{S}_n$. In particular, the homomorphism $\varphi_n$ is the monodromy representation of the cover $G_{2d,n}\to G_{2d,1}$. Thinking of vectors in $\mathbb{C}^n$ (and $\mathbb{C}^1$) as functions on the vertex set $V_{G_{2d,n}}$ (and $V_{G_{2d,1}}$ respectively), the eigenfunctions of the $1\times 1$ matrix $A_{G_{2d,1}} = (2d)$ all lift to eigenfunctions of $A_{G_{2d,1}}$. These lifts form the $1$-dimensional subspace of constant functions. The orthogonal complement are thus exactly the ``new'' functions on $G_{2d,n}$. We note this principle has nothing to do with graphs and that a similar decomposition of function spaces works for covers of topological spaces in general. We will see another instance of this when discussing the work of Hide--Magee (Section \ref{sec_HM}).

The above discussion implies that if we want to prove a spectral gap, we need to control the spectral radius
\[
\lVert A_{G_{2d,n}}^{\mathrm{new}} \rVert_\infty := \left\lVert \mathrm{std}_n \circ \varphi_n \left( \sum_{i=1}^d x_i+x_i^{-1} \right) \right\rVert_\infty
\]
of the self-adjoint matrix $A_{G_{2d,n}}^{\mathrm{new}}$, where we have linearly extended $\mathrm{std}_n\circ\varphi_n$ to the group ring $\mathbb{C}[F_d]$ -- the ring of formal finite linear combinations of elements of $F_d$. We need the following definition, in which $\mathrm{U}(\mathcal{H})$ denotes the group of unitary operators on a Hilbert space $\mathcal{H}$:
\begin{defi}
Let $\Gamma$ be a countable group and let $\Big(\rho_n:\Gamma \to \mathrm{U}(\mathcal{H}_n)\Big)_{n\geq 1}$ be a sequence of representations on a sequence of Hilbert spaces $\mathcal{H}_n$. We say that $\rho_n$ \textbf{converges strongly} to a representation $\rho:\Gamma\to U(\mathcal{H})$ on a Hilbert space $\mathcal{H}$ if, for all $x\in\mathbb{C}[\Gamma]$,
\[
\lim_{n\to\infty} \lVert \rho_n(x) \rVert_\infty = \lVert \rho(x) \rVert_\infty.
\]
If the representations $\rho_n$ are random we say the sequence \textbf{converges strongly in probability} to $\rho:\Gamma\to \mathrm{U}(\mathcal{H})$ if, for all $x\in\mathbb{C}[\Gamma]$ and all $\varepsilon>0$,
\[
\lim_{n\to\infty} \mathbb{P}\Big(\;\big\lvert\;\lVert \rho_n(x) \rVert_\infty - \lVert \rho(x) \rVert_\infty \;\big\rvert < \varepsilon\Big) = 1.
\]
\end{defi}

The case that is of interest to us is when the limit is the \textbf{left regular representation} of $\Gamma$. This is the representation $\rho_{\mathrm{reg}.}:\Gamma \to \mathrm{U}(\ell^2(\Gamma))$ given by
\[
\Big(\rho_{\mathrm{reg}.}(\gamma)\cdot f\Big)(\eta) = f(\gamma^{-1}\cdot\eta),\quad \gamma,\eta\in \Gamma, f\in\ell^2(\Gamma).
\]

\textcite{BordenaveCollins} proved the following:

\begin{theo}[Bordenave--Collins]\label{thm_BordenaveCollins}
Let $d\geq 2$ be fixed and let $\varphi_n\in\mathrm{Hom}(F_d,\mathfrak{S}_n)$ be uniformly random. Then $\mathrm{std}_n\circ\varphi_n$ converges strongly in probability to the left regular representation $\rho_{\mathrm{reg.}}:F_d \to \mathrm{U}(\ell^2(F_d))$ as $n\to\infty$.
\end{theo}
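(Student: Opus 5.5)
Two inequalities are needed for each fixed $x\in\mathbb{C}[F_d]$. The lower bound $\lVert\mathrm{std}_n\circ\varphi_n(x)\rVert_\infty\geq\lVert\rho_{\mathrm{reg.}}(x)\rVert_\infty-\varepsilon$ with high probability is the soft half. The upper bound is the real content.

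\emph{Lower bound.} I would pass to $x^*x$, so it suffices to treat self-adjoint $x$. By the Nica-type Poisson convergence of short cycles, the graphs $G_{2d,n}$ Benjamini--Schramm converge to $\mathcal{T}_{2d}$. Since there are only $O(1)$ short cycles, for a fixed word $w\neq e$ one gets $\frac1n\mathbb{E}\,\mathrm{Tr}\,\mathrm{perm}_n\circ\varphi_n(w)\to0$, while $\frac1n\mathrm{Tr}\,\mathrm{perm}_n\circ\varphi_n(e)=1$, and the variances are $O(1/n^2)$. Hence the normalized moments $\frac1n\mathrm{Tr}\big((x^*x)^k\big)$ converge in probability to $\tau\big((x^*x)^k\big)$, where $\tau$ is the canonical trace on the group von Neumann algebra. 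The trivial summand contributes only $O(1/n)$. Because $\tau$ is faithful, $\lim_k\tau\big((x^*x)^k\big)^{1/2k}=\lVert\rho_{\mathrm{reg.}}(x)\rVert_\infty$. Comparing the operator norm with high moments, $\lVert\cdot\rVert^{2k}\geq\frac1n\mathrm{Tr}(\cdot)^k$, then gives the lower bound.

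\emph{Upper bound.} First I would linearize. By the Haagerup--Thorbjørnsen/Pisier linearization trick, it suffices to prove strong convergence for all matrix-coefficient linear elements $a_0\otimes 1+\sum_i a_i\otimes\rho(x_i)+a_i^*\otimes\rho(x_i^{-1})$ with $a_i\in M_r(\mathbb{C})$.

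Next I would pass to the non-backtracking operator, following Bordenave. For such linear operators, an Ihara--Bass type identity (in the Anantharaman / Bordenave--Collins form) relates the spectrum of the operator on $\mathbb{C}^r\otimes\mathbb{C}^n$ to that of a weighted non-backtracking operator $B$ on $\mathbb{C}^r\otimes\mathbb{C}^{2dn}$. Its spectral radius on $\ell^2(F_d)$ is known explicitly. The task then becomes showing that, on the space orthogonal to the lifted trivial part, the spectral radius of $B$ is at most $\rho_\infty+\varepsilon$ with high probability.

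I would bound this by the trace method applied to $\lVert B^\ell\rVert^{2m}\leq\mathrm{Tr}\big((B^\ell(B^\ell)^*)^m\big)$ with $\ell\asymp\log n$ and $m\asymp\log n/\log\log n$. Expanding the trace gives a sum over closed paths made of $2m$ non-backtracking segments, and its expectation is computed through the joint cycle statistics of random permutations, as in Lemma \ref{lem_trace_expansion}.

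The main obstacle is again tangles, exactly as in Friedman's proof. Paths whose support has large negative Euler characteristic, or that visit tangled neighbourhoods, spoil the moment bound. I would handle this as follows.
\begin{itemize}
\item Show that with probability $1-o(1)$ the graph is tangle-free at scale $\ell$, meaning every $\ell$-ball contains at most one cycle.
\item Restrict to tangle-free paths, so that only paths with bounded excess need to be counted.
\item Decompose $B^\ell$ à la Bordenave into a centred part $\underline{B}^{(\ell)}$, obtained by replacing each permutation matrix by its centring $\mathrm{perm}-\frac1n J$, plus remainder terms $R_k^{(\ell)}$.
\item Estimate the centred part via path counting: centring forces every edge to be traversed at least twice, which yields the $\rho_\infty^\ell$ growth.
\item Show the remainder terms are small on $\mathds{1}^\perp$, because they factor through the lift of the trivial representation, and $\mathrm{std}_n$ removes exactly that.
\end{itemize}

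Combining these bounds gives $\lVert B^\ell|_{\mathrm{new}}\rVert\leq(\log n)^C\rho_\infty^\ell$ with high probability. Taking $\ell$-th roots yields the upper bound.
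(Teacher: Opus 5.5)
The paper does not prove this theorem. It states it for use in Section \ref{sec_HM} and only remarks that Bordenave--Collins build in part on the strategy of Bordenave's new proof of Friedman's theorem, mentioning the polynomial method of Chen--Garza-Vargas--Tropp--van Handel as an alternative. Your outline is the Bordenave--Collins route, and two parts of it are fine as written:
\begin{itemize}
\item your lower bound (convergence in probability of $\frac1n\mathrm{Tr}\,\rho_n((x^*x)^k)$ to $\tau((x^*x)^k)$, faithfulness of $\tau$, and $\lVert M\rVert_\infty^k\geq\frac{1}{n-1}\mathrm{Tr}(M^k)$ for $M\geq 0$) is complete;
\item the reduction to linear elements by linearization is standard.
\end{itemize}
In the upper bound, however, two steps would not go through as stated.

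\textbf{The Ihara--Bass reduction.} Write $A$ for your linear element evaluated at $\rho_n$ and $A_\star$ for the same element evaluated at $\rho_{\mathrm{reg.}}$. With matrix coefficients and a constant term $a_0$, there is no single non-backtracking operator whose spectral radius you can compare with an explicitly known $\rho_\infty$ and then convert into a bound on $\lVert A\rVert_\infty$. What is needed is the following.
\begin{itemize}
\item A family $B_\lambda$, indexed by real $\lambda\notin\sigma(A_\star)$, with weights built from the resolvent of $A_\star$ at $\lambda$, such that $\lambda\in\sigma(A\rvert_{\mathrm{new}})$ forces $1\in\sigma(B_\lambda\rvert_{\mathrm{new}})$.
\item A free-side input replacing your ``known explicitly'': the limiting operators satisfy $\rho(B_{\lambda,\star})<1$ for every real $\lambda\notin\sigma(A_\star)$.
\item A random-side estimate $\rho(B_\lambda\rvert_{\mathrm{new}})\leq\rho(B_{\lambda,\star})+\varepsilon$ holding with high probability, uniformly for $\lambda$ in compact subsets of $\mathbb{R}\setminus\sigma(A_\star)$.
\end{itemize}
The picture of one operator with spectral radius $\sqrt{2d-1}$ and an explicit quadratic relation between the two spectra is the scalar unweighted case. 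That case yields Friedman's theorem, not strong convergence.

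\textbf{The treatment of remainders.} In Bordenave's decomposition $B^{(\ell)}=\underline{B}^{(\ell)}+\frac1n\sum_{k}\underline{B}^{(k-1)}KB^{(\ell-k)}-\frac1n\sum_{k}R_k^{(\ell)}$, only the middle terms vanish on the new subspace. On the tangle-free event, $B^{(\ell-k)}=B^{\ell-k}$ preserves that subspace, and $K$, which only sees sums over fibers, kills it. The terms $R_k^{(\ell)}$ correct for concatenations of two tangle-free paths that are themselves tangled. They do not factor through the trivial representation and require their own high-moment bound.

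Two smaller points:
\begin{itemize}
\item Tangle-freeness bounds the excess of each of the $2m$ segments, not of the closed paths in $\mathrm{Tr}((\underline{B}^{(\ell)}\underline{B}^{(\ell)*})^m)$, whose excess $\chi$ can be of order $m$. What it buys is a count of $(\ell m)^{O(m\chi)}$ path shapes against a gain $n^{-\chi}$, and this is what forces $m\asymp\log n/\log\log n$.
\item With matrix weights the centred estimate is not a path count. You must bound sums of products of the $a_i$ and their adjoints along paths, and this is where the limiting spectral radius genuinely enters.
\end{itemize}

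\textbf{Comparison with the polynomial method.} This route, whose surface analogue is sketched in Section \ref{sec_HMT}, sidesteps all of this. It proceeds as follows:
\begin{itemize}
\item it uses that $\mathbb{E}\,\frac1n\mathrm{Tr}\,p(\rho_n(x))$ is, for each polynomial $p$, a rational function of $1/n$ of controlled degree;
\item it extends the first-order expansion to smooth test functions by the Markov brothers' inequality;
\item it shows that the first-order term is supported in $[-\lVert\rho_{\mathrm{reg.}}(x)\rVert_\infty,\lVert\rho_{\mathrm{reg.}}(x)\rVert_\infty]$ for self-adjoint $x$;
\item it concludes by Markov's inequality.
\end{itemize}
It needs no non-backtracking operator, no Ihara--Bass identity and no tangle analysis, and it gives quantitative bounds. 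Your route, in exchange, gives direct control of the non-backtracking spectrum.
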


We finish this section by explaining how the theorem above implies Friedman's theorem. First we recall what a \textbf{Cayley graph} is. Given a group $\Gamma$ and a generating set $S$ for $\Gamma$, the Cayley graph $G(\Gamma,S)$ is the graph with vertex set $V_{G(\Gamma,S)}=\Gamma$ and edge set
\[
E_{G(\Gamma,S)} = \left\{ \{\gamma,\gamma \cdot x);\; x \text{ or }x^{-1} \in S\right\}.
\]
The Cayley graph of $F_d$ with respect to the generating set $(x_1,\ldots,x_d)$ is isomorphic to $\mathcal{T}_{2d}$. In particular, 
\[
\rho_{\mathrm{reg.}}\left(\sum_{i=1}^d x_i + x_i^{-1}\right) = A_{\mathcal{T}_{2d}}.
\]
So if we apply Theorem \ref{thm_BordenaveCollins} to $\sum_{i=1}^d x_i + x_i^{-1} \in \mathbb{C}[F_d]$, we obtain Friedman's theorem. We however emphasize that the theorem above is much stronger than what we need for a near optimal spectral gap for random graphs. When we apply it to random surfaces below, we will need the full strength of the statement.

Bordenave and Collins's result is also part of a much longer story on strong convergence for unitary representations. Strikingly, even though we now know, through Theorem \ref{thm_BordenaveCollins}, that the regular representation of $F_d$ can be strongly approximated by finite dimensional unitary representations that factor through standard representations of symmetric groups, we currently don't have any explicit examples of this. In fact, even if we don't ask for the representations to factor through the symmetric group, no explicit sequences are known. We refer to the surveys by \textcite{Magee_survey,vanHandel_survey} for more on this.

\section{Random surfaces} 

Random surfaces have been around in the mathematical physics, combinatorics and probability literature for a long time. One particular model that has attracted a lot of attention is that of \textbf{random planar maps} (random cell decompositions of the $2$-sphere). Some of the highlights in this field are the description of the local geometry, by \textcite{AngelSchramm}, and the global geometry, by \textcite{LeGall,Miermont}, of random planar maps with faces with a fixed number of sides.

In this section we will present a brief survey of the history of random hyperbolic surfaces. We will introduce the three most common models in the order in which they have historically been introduced, which is the opposite order to that in which the near optimal spectral gaps for them were proved. Towards the end of this section, we discuss some alternative models.

\subsection{Random Bely\u{\i} surfaces}\label{sec_BM_model}

\subsubsection{The model} The first model of random hyperbolic surfaces was defined by \textcite{BrooksMakover}. In this model, the first step is to randomly glue together $2n$ ideal hyperbolic triangles without shear (this can be defined by requiring that the tangency points of the maximal inscribed disks match up, like in Section \ref{sec_thrice_punc}) into a surface $S_n$ with cusps. Figure \ref{pic_brooks_makover} shows a sample for $n=3$. The gluings themselves are determined by the shear condition, so the only randomness comes from the combinatorics of the triangulation. The model Brooks and Makover use for this is often called the \textbf{configuration model}: the sides of the triangles are labeled with the numbers $1,\ldots,6n$ and the combinatorics of the gluing is determined by a uniformly random perfect matching (a set partition into pairs) on $[6n]$.

\begin{figure}[ht]
\begin{center}
\includegraphics{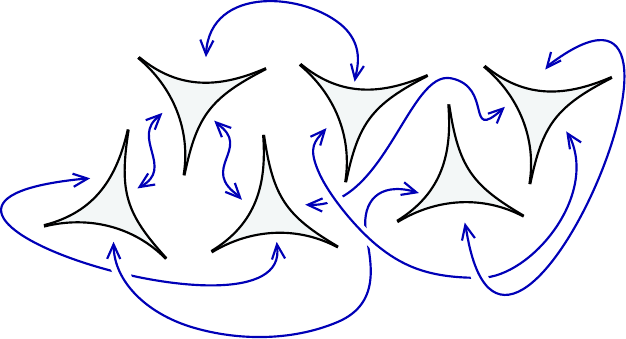}
\caption{Gluing ideals triangles into a surface without boundary.}\label{pic_brooks_makover}
\end{center}
\end{figure}

Afterwards, we compactify $S_n$ so as to obtain a surface $\overline{S_n}$. To do this, we observe that the cusps admit neighborhoods that are conformally equivalent to punctured disks in the complex plane and one can simply add the punctures in. This yields a closed Riemann surface and hence, using the uniformization theorem, a constant curvature metric.

As observed by \textcite{BrooksMakover_Belyi}, this model can also be described as a random cover model. To see this, we recall that as abstract groups 
\[\mathrm{PSL}(2,\mathbb{Z}) \simeq \left(\mathbb{Z}/2\mathbb{Z}\right) \ast \left(\mathbb{Z}/3\mathbb{Z}\right)=\langle \tau,\sigma|\; \tau^2=\sigma^3=1\rangle\]
where ``$\ast$'' denotes a free product. As such, we can build a random cover of the modular curve $\mathrm{PSL}(2,\mathbb{Z})\backslash\mathbb{H}^2$ using a random homomorphism to a symmetric group, in the same way we did for graphs. In order to have these covers be surfaces (as opposed to orbifolds), we build a random homomorphism $\varphi_n:\mathrm{PSL}(2,\mathbb{Z})\to \mathfrak{S}_{6n}$ by sending $\tau$ to a uniformly random fixed point free involution (so a product of $3n$ distinct $2$-cycles) and $\sigma$ to a uniformly random permutation of order three without fixed point (so a product of $2n$ distinct $3$-cycles). This guarantees that the torsion doesn't lift to the surface
\[
S_n = \mathrm{PSL}(2,\mathbb{Z}) \backslash \Big(\mathbb{H}^2\times [6n]\Big),
\]
where $\mathrm{PSL}(2,\mathbb{Z})$ acts on $[6n]$ through $\varphi_n$. We're using the same notation because distribution of this surface $S_n$ is the same as that of the cusped surface $S_n$ we defined before.

When we compactify $S_n$ and $\mathrm{PSL}(2,\mathbb{Z})\backslash\mathbb{H}^2$, the cover $S_n\to\mathrm{PSL}(2,\mathbb{Z})\backslash\mathbb{H}^2$ turns into a branched cover, branched over at most three points (coming from the two cone points and the cusp). By \textcite{Belyi}, the set of surfaces that admit such a cover are exactly the curves that can be defined over $\overline{\mathbb{Q}}$, a dense set among all Riemann surfaces (see the survey by \textcite{JonesSingerman} for more on this). This means that the most interesting surfaces are the compactifications $\overline{S_n}$. Indeed, the set of all non-compact surfaces that the model generates intersect every moduli space only in a finite number of points. 

In order to understand the geometry of the surfaces $\overline{S_n}$, one needs to be able to compare the geometry of the non-compact surfaces $S_n$, whose geometry is entirely encoded by the combinatorics of the triangulation, and the geometry of their compactifications $\overline{S_n}$. \textcite{Brooks_platonic} proved effective comparison theorems for the geometry away from the cusps and their images before and after compactification that can be used to do this.

\subsubsection{Results}
Now we first need to understand the topology of $\overline{S_n}$. It follows from classical results on the configuration model by \textcite{Bollobas,Wormald_connected} that the surface $\overline{S_n}$ is connected with high probability as $n\to\infty$\footnote{We will say that a sequence of events $(A_n)_n$ hold with high probability as $n\to\infty$ if $\mathbb{P}(A_n)\to 1$ as $n\to\infty$.}. \textcite{BrooksMakover} proved that with high probability the genus of $S_n$ is close to its maximal value $\frac{n+1}{2}$. \textcite{Gamburd} refined this result and obtained very precise estimates on the distribution of the genus. He did so by applying a famous lemma by \textcite{DiaconisShahshahani} to the distribution of $\varphi_n(\sigma\tau)$, which describes how the cusp of $\mathrm{PSL}(2,\mathbb{Z})\backslash\mathbb{H}^2$ lifts to $S_n$.  All this in particular implies that $\overline{S_n}$ is typically hyperbolic.

The first results on the geometry are by \textcite{BrooksMakover}, who proved that their random surfaces are expanders. They proved that there exist constants $C_1,C_2,C_3,C_4>0$ such that
\[
\mathrm{diam}(\overline{S_n}) \leq C_1\cdot \log(n), \quad \lambda_1(\overline{S_n}) \geq C_2, \quad h(\overline{S_n}) > C_3 \quad \text{and} \quad \mathrm{systole}(\overline{S_n})>C_4, 
\]
with high probability as $n\to\infty$. All of these bounds have since been sharpened. \textcite{BudzinskiCurienPetri_diameter_Belyi} proved that the diameter is asymptotic to $2\log(n)$ in probability as $n\to\infty$, \textcite{Petri,PetriThaele} proved that the primitive length spectrum -- the multi-set of lengths of primitive closed geodesics on $\overline{S_n}$ -- converges to a Poisson point process. This also implies uniform discreteness and Benjamini--Schramm convergence of these surfaces, see the survey by \textcite{Raimbault}. \textcite{ShenWu1,ShenWu2} proved that the Cheeger constant is at most $\frac{3}{2\pi}+o(1)$ and the spectral gap tends to $\frac{1}{4}$, as $n\to\infty$, using methods close to those of \textcite{HideMagee} for the latter. Finally, \textcite{Naud} proved a general criterion for the convergence of the normalized $\zeta$-regularized determinant of the Laplacian to a universal value, that in particular applies to this model of random surfaces.

\subsection{Weil--Petersson random surfaces}\label{sec_WP}

\subsubsection{The model} The second model we will treat is that of surfaces sampled using the Weil--Petersson volume form on $\mathcal{M}_g$ for $g\geq 2$. \textcite{Wolpert} proved that, for any pants decomposition $\mathcal{P}$ of a closed surface of genus $g$, the symplectic form 
\[
\sum_{\alpha\in\mathcal{P}} d\ell_\alpha \wedge d \tau_\alpha
\]
coincides with the symplectic form coming from the \textbf{Weil--Petersson} Kähler form on Teichmüller space, the definition of which we will not discuss in this text. Wolpert's theorem implies that the volume form $\wedge_{\alpha \in\mathcal{P}} (d\ell_\alpha\wedge d\tau_\alpha)$ defines a mapping class group invariant volume form on Teichmüller space and thus a volume on any moduli space of closed surfaces. We will denote the associated volume measure by $\mathrm{vol}_{\mathrm{WP}}$. The total Weil--Petersson volume of moduli space is finite, which allows us to define a probability measure, by normalizing the Weil--Petersson volume measure. We will write $X_g\in\mathcal{M}_g$ for a random surface distributed according to this measure. Similar volume forms can be defined on moduli spaces of hyperbolic surfaces with cusps and/or boundary. These forms coincide with the symplectic form defined by \textcite{Goldman}. For more background on the Weil--Petersson metric, we refer the reader to the books by \textcite{ImayoshiTaniguchi} and \textcite{Wolpert_notes}.

What sets this model apart among the three models of random surfaces we describe in this text is that the support of the corresponding probability measure is \textbf{all} hyperbolic surfaces of a given topology. We note that there are other natural such models that are currently poorly understood, like for instance the model coming from the Weil--Petersson measure multiplied by the Mirzakhani function (see for instance the paper by \textcite{AranaHerreraAthreya}).

\subsubsection{Results}
The geometry of Weil--Petersson random surfaces was first investigated by \textcite{GuthParlierYoung,Mirzakhani_random}. Guth, Parlier and Young used them to prove an existence result, namely that, for every $\varepsilon>0$ and every $g$ large enough, there exist hyperbolic surfaces of genus $g$ that do not admit a pants decomposition of total length $\leq g^{7/6-\varepsilon}$. This is still the best known lower bound on the maximal minimal total pants length of a surface of genus $g$. The best known upper bound follows from work by  \textcite{Parlier_shorter_note} on the constant in \textcite{Bers}. The resulting bound states that every closed hyperbolic surface admits a pants decomposition of total length at most $12\pi(g-1)^2$.

Mirzkhani used the integration techniques she developed as a part of her thesis (see the article by \textcite{Mirzakhani_WP_volumes} for the beginning of this story) to study the random surfaces $X_g$. A priori, one might want to simply integrate functions over some fundamental domain $\mathcal{F}_g\subset\mathcal{T}_g$ for the corresponding mapping class groups. However, no sequence of fundamental domains is known that is also easy to describe.

Mirzakhani overcame this difficulty by proving the \textbf{Mirzakhani integration formula}. In order to state it, suppose that $\Gamma = (\gamma_1,\ldots,\gamma_k)$ is a collection of disjoint, pairwise non-homotopic, essential (see footnote \ref{fn_essential_curve}) simple closed curves on $\Sigma_{g,n}(L)$ and that $F:[0,\infty)^k \to \mathbb{R}$ is a function. Then we define $F^\Gamma:\mathcal{M}_{g,n}(L) \to \mathbb{R}$ by
\[
F^\Gamma(X) = \sum_{(\alpha_1,\ldots,\alpha_k) \in \mathrm{MCG}(\Sigma_{g,n}(L))\cdot \Gamma} F(\ell_{\alpha_1}(X),\ldots,\ell_{\alpha_k}(X)),\quad X \in \mathcal{M}_{g,n}(L).
\]
For example, if $\gamma\subset \Sigma_g$ is a non-separating simple closed curve and $F=\mathds{1}_{[0,L]}$ is the indicator function of the interval $[0,L]$, then $F^\gamma:\mathcal{M}_g\to \mathbb{N}$ is the function that assigns the number of non-separating simple closed geodesics of length at most $L$ on $X$ to $X\in\mathcal{M}_g$. 

Using a covering argument, Mirzakhani proved an integration formula for functions of the form above. In this theorem, $\mathcal{M}_{\Sigma_{g,n}(L),\Gamma}(L_1,\ldots,L_n,x_1,x_1,\ldots, x_k,x_k)$ denotes the moduli space of hyperbolic metrics on $\Sigma_{g,n}(L)-\Gamma$, in which the two boundary components coming from $\gamma_i$ both have length $x_i$ for $i=1,\ldots ,k$. If $\Sigma_{g,n}(L)-\Gamma$ has multiple connected components, then $\mathcal{M}_{\Sigma_{g,n}(L),\Gamma}(L,x,x)$ is a product over these components. Mirzakhani proved:

\begin{theo}[Mirzakhani Integration Formula]\label{thm_Mirzakhani_integration_formula}
Let $F:[0,\infty)^k\to\mathbb{R}$, let $L\in [0,\infty)^n$ and let $\Gamma=(\gamma_1,\ldots,\gamma_k)$ be a collection of disjoint pairwise non-homotopic, essential simple closed curves on $\Sigma_{g,n}(L)$. Then
\begin{multline*}
\int_{\mathcal{M}_{g,n}(L)} F^\Gamma(X)\; d\mathrm{vol}_{\mathrm{WP}}(X) \\
=C_\Gamma \int\limits_{[0,\infty)^k} F(x) \cdot \mathrm{vol}_{\mathrm{WP}}\Big(\mathcal{M}_{\Sigma_{g,n}(L),\Gamma}(L_1,\ldots,L_n,x_1,x_1,\ldots, x_k,x_k) \Big) \; x_1\cdots x_k\; dx_1 \cdots dx_k.
\end{multline*}
Here $C_\Gamma$ is a constant that depends on $\Gamma$ alone\footnote{For a precise description of this contant, we refer to the survey by \textcite{Wright_tour}.}.
\end{theo}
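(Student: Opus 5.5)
The plan is an unfolding argument: rewrite the left-hand side as an integral over an intermediate cover of $\mathcal{M}_{g,n}(L)$ on which $F^\Gamma$ becomes a single term. Let $\mathrm{Stab}(\Gamma)<\mathrm{MCG}(\Sigma_{g,n}(L))$ be the stabilizer of the (ordered) multicurve $\Gamma$, and consider
\[
\mathcal{M}_{g,n}(L)^\Gamma = \mathrm{Stab}(\Gamma)\backslash \mathcal{T}_{g,n}(L).
\]
Points of this space are surfaces together with a marked multicurve in the orbit of $\Gamma$. The function $X\mapsto F(\ell_{\gamma_1}(X),\ldots,\ell_{\gamma_k}(X))$ is well defined on it. Summing over the fibres of the covering map $\mathcal{M}_{g,n}(L)^\Gamma\to\mathcal{M}_{g,n}(L)$ recovers exactly $F^\Gamma$, since the fibres are in bijection with $\mathrm{MCG}\cdot\Gamma$. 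Because the Weil--Petersson volume form is mapping class group invariant, it descends to both spaces, and we get
\[
\int_{\mathcal{M}_{g,n}(L)} F^\Gamma \,d\mathrm{vol}_{\mathrm{WP}} = \int_{\mathcal{M}_{g,n}(L)^\Gamma} F(\ell_{\gamma_1},\ldots,\ell_{\gamma_k})\, d\mathrm{vol}_{\mathrm{WP}}.
\]
Orbifold points and generic stabilizers (for instance the hyperelliptic involution in genus $2$) need care. They only affect the result by a factor $2^{-M}$, which we absorb into $C_\Gamma$.

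Next, compute the integral upstairs in Fenchel--Nielsen coordinates adapted to $\Gamma$. Complete $\Gamma$ to a pants decomposition $\mathcal{P}\supset\Gamma$. By Wolpert's theorem, the volume form is $\bigwedge_{\alpha\in\mathcal{P}} d\ell_\alpha\wedge d\tau_\alpha$, so it splits as $\prod_{i=1}^k d\ell_{\gamma_i}\,d\tau_{\gamma_i}$ times the Weil--Petersson form of the cut surface $\Sigma_{g,n}(L)-\Gamma$ with boundary lengths $(L,x,x)$.

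Then identify the structure of $\mathrm{Stab}(\Gamma)$. Up to finite index, coming from permutations and orientation reversals of the components of $\Gamma$ that preserve the ordering, it is generated by the Dehn twists $T_{\gamma_i}$ together with the mapping class group of the cut surface. There is an exact sequence
\[
1\to \mathbb{Z}^k=\langle T_{\gamma_1},\ldots,T_{\gamma_k}\rangle \to \mathrm{Stab}_0(\Gamma)\to \mathrm{MCG}(\Sigma_{g,n}(L)-\Gamma)\to 1.
\]
This exhibits $\mathcal{M}_{g,n}(L)^\Gamma$, up to that finite index, as a fibration. Its base is the set of $x\in[0,\infty)^k$, and over each $x$ it has
\begin{itemize}
\item fibres $\prod_i (\mathbb{R}/x_i\mathbb{Z})$ coming from the twist parameters, since $T_{\gamma_i}$ shifts $\tau_{\gamma_i}$ by $\ell_{\gamma_i}=x_i$;
\item the moduli space $\mathcal{M}_{\Sigma_{g,n}(L),\Gamma}(L,x,x)$ of the cut surface.
\end{itemize}

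Finally, apply Fubini. Integrating the twists gives the factor $x_1\cdots x_k$, and integrating over the cut moduli space gives its volume, which yields the stated formula.

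The main obstacle is making the third step precise. One has to check that the twist parameters genuinely decouple, i.e. that the twist coordinates along $\Gamma$ are well defined on the quotient by the cut-surface mapping class group. One also has to track the finite-index and orbifold corrections that make up $C_\Gamma$. I would handle these by working on the finite cover given by the pure stabilizer, checking invariance of the product form there, and dividing by the index at the end.
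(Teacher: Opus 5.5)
Your proposal is correct and is essentially the covering (unfolding) argument that the paper attributes to Mirzakhani. You unfold to $\mathrm{Stab}(\Gamma)\backslash\mathcal{T}_{g,n}(L)$, apply Wolpert's formula in Fenchel--Nielsen coordinates adapted to $\Gamma$, integrate out the twist tori to produce the factor $x_1\cdots x_k$, and absorb all finite-index and orbifold corrections into $C_\Gamma$.

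Two small refinements, neither of which affects the argument since $C_\Gamma$ is left unspecified. First, the decoupling you flag holds for a different reason than the one you propose to check. The twist coordinates along $\Gamma$ are not invariant under lifts of $\mathrm{MCG}(\Sigma_{g,n}(L)-\Gamma)$. What saves the argument is that the twist flows commute with $\mathrm{Stab}(\Gamma)$, and shifting twist origins by functions of the other coordinates leaves $\bigwedge_\alpha d\ell_\alpha\wedge d\tau_\alpha$ unchanged. Second, the factor $2^{-M}$ you mention comes from one-holed torus components of the cut surface: their elliptic involution lifts to a square root of $T_{\gamma_i}$ and so halves the twist circle. It does not come from the genus-$2$ hyperelliptic involution, which fixes every curve and therefore cancels in the unfolding.
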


This theorem can be thought of as a hyperbolic version of the integration formula for the moduli space of lattices $\mathrm{SL}(n,\mathbb{Z})\backslash\mathrm{SL}(n,\mathbb{R})$ proved by \textcite{Siegel}. This is one of multiple surprising similarities to the moduli space of lattices, Indeed, the moduli space of hyperbolic surfaces is not a locally homogeneous space so there is no reason to expect it should behave similarly.

\textcite{Mirzakhani_WP_volumes,Mirzakhani_scg,Mirzakhani_intersection} combined this formula with her generalization of McShane's identity and multiple other beautiful ideas to derive recurrences for Weil--Petersson volumes, prove asymptotic counting results for the number of simple closed geodesics on a hyperbolic surface and re-prove Witten's conjecture on intersection numbers of tautological classes on the moduli space of curves. \textcite{MirzakhaniZograf} used Mirzakhani's recurrences to derive an asymptotic equivalent for $\mathrm{vol}_{\mathrm{WP}}(\mathcal{M}_{g,n})$ as $g\to\infty$ that plays an important role in nearly all the papers on Weil--Petersson random surfaces.

For the random surfaces $X_g$, \textcite{Mirzakhani_random} proved that with high probability as $g\to\infty$, for every $\varepsilon>0$,
\[
\mathrm{diam}(X_g) \leq 40\cdot\log(g),\quad h(X_g) \geq \frac{\log(2)}{2\pi + \log(2)}-\varepsilon
\]
and as a result of the latter (using the Cheeger inequality, Theorem \ref{thm_CheegerBuser}),
\[
\lambda_1(X_g) \geq \frac{\log^2(2)}{(4\pi+\log(4))^2}-\varepsilon=0.00246\ldots-\varepsilon.
\]
Mirzakhani's results also imply the Benjamini--Schramm convergence to $\mathbb{H}^2$ of these surfaces as $g\to\infty$ (see the survey by \textcite{Raimbault}). 

There has been a lot of work on these random surfaces since. We start with lengths of geodesics. \textcite{MirzakhaniPetri} proved that, like for random Bely\u{\i} surfaces, the primitive length spectrum converges to a Poisson point process as the genus tends to infinity, but with a different density (that also appears in a combinatorial setting, see the works of \textcite{JansonLouf,BarazerGiacchettoLiu}). Lengths of longer geodesics on these random surfaces were investigated by \textcite{NieWuXue,ParlierWuXue,MonkThomas,WuXue2,HeShenWuXue}.

The study of the bass note of these surfaces has also attracted a lot of attention. Mirzakhani's bound on the first Laplacian eigenvalue was first improved to $\frac{3}{16}-\varepsilon$, independently by \textcite{WuXue1,LipnowskiWright}, then to $\frac{2}{9}-\varepsilon$ by \textcite{AnantharamanMonk5}, afterwards to $\frac{1}{4}-\varepsilon$, also by \textcite{AnantharamanMonk4} (see Section \ref{sec_AM}) and most recently \textcite{HideMaceraThomas} proved that, for some fixed $c>0$, $ \lambda_1(X_g) \geq \frac{1}{4} - g^{-c}$ with high probability as $g\to\infty$ (see Section \ref{sec_HMT}). 

There are other spectral questions to ask as well. \textcite{Monk} made the Benjamini--Schramm convergence and the resulting spectral convergence of the surfaces $X_g$ more effective and \textcite{Gong,MonkRares} extended this to Laplacians twisted by a harmonic $1$-form and Dirac operators respectively. \textcite{LeMassonSahlsten} proved a quantum ergodicity theorem for these random surfaces and \textcite{GilmoreLeMassonSahlstenThomas,Thomas} studied delocalization for eigenfunctions. The latter group for instance proved that the $L^\infty$-norm of any $L^2$-normalized eigenfunction corresponding to an eigenvalue in a fixed window $[a,b]\subset (0,\infty)$ is at most $O(\log(g)^{-1/2})$.

The criterion for the convergence of the determinant of the Laplacian by \textcite{Naud} applies here too and \textcite{HeWu} also studied the expected determinant. \textcite{Rudnick,RudnickWigman1,MarklofMonk,RudnickWigman2} showed that, when first sending the genus and then the energy window to infinity, certain statistics for eigenvalues in the given window behave like those of large GOE and GUE matrices. 

Finally, we can add cusps to the picture. \textcite{Hide1,HeWuXue} studied the effect on the first eigenvalue of adding a limited number of cusps to the random surfaces. If instead of the genus, we let the number of cusps tend to infinity, the geometry and spectra of these random surfaces starts behaving very differently, see the works by \textcite{HideThomas2,HideThomas1,ShenWu3,BuddCurien}. These works prove that there are many small eigenvalues and the surfaces admit a scaling limit, which is a Brownian map, just like for random planar maps. We can also let the number of cusps and the genus tend to infinity at the same time. \textcite{BuddLions}, using work by \textcite{BuddZonneveld}, showed that when the number of cusps tends to infinity sufficiently quickly and only certain geodesics -- called tight geodesics -- are counted, their statistics on surfaces of large genus with many cusps are the same as on closed surfaces of large genus.

\subsection{Random finite degree covers}\label{sec_rand_covers}

\subsubsection{The model}
The third model we will discuss is that of random covers. This is the analogue of the permutation model for random regular graphs. In general, a random cover of a nice enough space $X$ with a finitely generated fundamental group $\Gamma = \pi_1(X,x)$ can be built by choosing a homomorphism $\varphi_n\in\mathrm{Hom}(\Gamma,\mathfrak{S}_n)$ uniformly at random and setting
\[
X_n = \Gamma \; \backslash \; (\widetilde{X} \times [n] ),
\]
where $\widetilde{X}$ denotes the universal cover of $X$ and where the action on $[n]$ is through $\varphi_n$. Indeed, because $\Gamma$ is finitely generated, $\mathrm{Hom}(\Gamma,\mathfrak{S}_n)$ is finite and thus we have a well-defined uniform probability measure on it. In general, saying anything about these random covers is very difficult. Even just estimating $\#\mathrm{Hom}(\Gamma,\mathfrak{S}_n)$ is a notoriously hard problem. We refer to the book by \textcite{LubotzkySegal} for more on this problem in general.

In our setting, the base will be an orientable hyperbolic surface of finite area. In particular, the fundamental group $\Gamma$ will either be a non-abelian free group or a surface group. In the former case, the probability space is exactly the same as in the permutation model (see Section \ref{sec_alon_conj}). That is, as sets we can make the identification $\mathrm{Hom}(F_r,\mathfrak{S}_n) = \mathfrak{S}_n^r$. Figure \ref{pic_permutation_surface} shows an example of a cover of the thrice punctured sphere (so $\Gamma\simeq F_2$) defined by a homomorphism $\varphi_5:\Gamma\to\mathfrak{S}_5$.

\begin{figure}
\begin{center}
\begin{overpic}{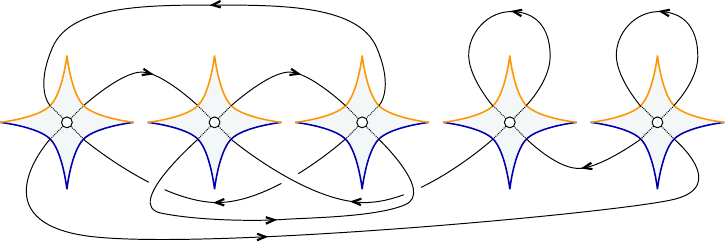}
\put(10.5,15.5){$1$}
\put(31,15.5){$2$}
\put(51.5,15.5){$3$}
\put(71.75,15.5){$4$}
\put(92.25,15.5){$5$}
\end{overpic}
\caption{Gluing ideal squares together according to the permutations $(1\;2\;3)(4)(5)$ and $(1\;5\;4\;2\;3)$ in $\mathfrak{S}_5$ yields a $5$-fold cover of the thrice punctured sphere.}\label{pic_permutation_surface}
\end{center}
\end{figure}

The closed case is significantly more complicated. If we write $\Gamma_g$ for the fundamental group of an orientable surface of genus $g$, then we have
\[
\mathrm{Hom}(\Gamma_g,\mathfrak{S}_n) = \Big\{(\alpha_1,\ldots,\alpha_g,\beta_1,\ldots,\beta_g)\in\mathfrak{S}_n^{2g} ;\; \prod_{i=1}^g [\alpha_i,\beta_i] = e \Big\}. 
\]
\textcite{Hurwitz} was the first to prove that this can be expressed in terms of the irreducible representations of the symmetric group:
\[
\#\mathrm{Hom}(\Gamma_g,\mathfrak{S}_n) = (n!)^{2g-1} \sum_{\substack{\rho:\mathfrak{S}_n\to\mathrm{GL}(V^\rho) \text{ irreducible} \\ \text{representation }}} \frac{1}{\dim(V^\rho)^{2g-2}}.
\]
\textcite{Lulov,MuellerPuchta1} analyzed the sum over the irreducible representations and proved that $\#\mathrm{Hom}(\Gamma_g,\mathfrak{S}_n)\sim 2\cdot (n!)^{2g-1}$ as $n\to\infty$ and \textcite{LiebeckShalev,MuellerPuchta2} proved a similar asymptotic equivalent for more general Fuchsian groups. Note that the $2$ in this estimate comes from the trivial representation and the sign representation of $\mathfrak{S}_n$. The underlying theorem is that asymptotically, the contribution of the higher-dimensional representations is negligible.

\subsubsection{Results}
The work by \textcite{Dixon,Nica} implies that if the base surface $X$ is not compact (but of finite area), then a random degree $n$ cover $X_n$ of $X$ is connected with high probability and its primitive length spectrum converges to a Poisson point process. The distribution of the number of cusps of $X_n$ (and thus that of the genus, through the Euler formula) can be understood in a similar way to the work of \textcite{Gamburd} on random Bely\u{\i} surfaces. If for instance the base surface has at least two cusps, then this setting is easier. Indeed, because the simple loops around the cusps induce primitive elements (elements that can be part of a free generating set) in the fundamental group of the base. This implies that the distributions of their images in $\mathfrak{S}_n$ are uniform, which in turn implies that in the large $n$ limit, the expected number of cusps is asymptotic to $\log(n)$ and their sizes are governed by a Poisson--Dirichlet distribution (see for instance the books by \textcite[Chapter 5]{ArratiaBarbourTavare} and \textcite[Chapter 3]{Pitman} for a definition and background).

If the base surface is closed, then the work by \textcite{MuellerPuchta1} implies that $X_n$ is connected with high probability. The length spectrum of these is also asymptotically Poisson distributed. In order to prove this, new techniques were developed by \textcite{MageePuder1,MageePuder2} and further developed by \textcite{PuderZimhoni,Maoz}.

We observe that the Laplacian spectrum of the base surface $X$ lifts to all the covers of finite degree. So the natural question to ask is what the new spectrum (see Section \ref{sec_function_spaces} for a rigorous defintion) is like. For closed base surfaces, \textcite{MageeNaudPuder} proved that, for every $\varepsilon>0$, with high probability as $n\to\infty$, the Laplacian on $X_n$ has no new spectrum below $\frac{3}{16}-\varepsilon$. So in particular, if the base surface has no spectrum below $\frac{3}{16}$ (like for instance the Bolza surface), then there is a spectral gap of at least $\frac{3}{16}-\varepsilon$. 

\textcite{HideMagee} proved that if the base surface $X$ is non-compact and of finite area, then with high probability, there is a similar relative spectral gap of $\frac{1}{4}-\varepsilon$. Since the thrice-punctured sphere has no small eigenvalues (see Lemma \ref{lem_spec_thrice_punc}), this yields a sequence of hyperbolic surfaces of finite area with a near optimal spectral gap. Using a compactification procedure that glues pairs of cusps together, due to \textcite{BuserBurgerDodziuk} (or alternatively the procedure described in Section \ref{sec_BM_model}, see the work by \textcite{BrooksMakover_ev} for details), they can be compactified so as to yield a sequence of closed surfaces with a near optimal spectral gap. \textcite{Hide2} made the $\varepsilon$ in the theorem effective, obtaining an error bound of size $O(\log\log(n)^2/\log(n))$. As we will sketch in Section \ref{sec_HM}, these results use the strong convergence result by Bordenave--Collins (Theorem \ref{thm_BordenaveCollins}) as vital input. There are by now several techniques to turn strong convergence results into near optimal spectral gaps. In the locally symmetric setting, \textcite{Magee_survey} gave a representation-theoretic proof that shows how one can turn a strong convergence statement for permutation representations of a cocompact lattice in semi-simple Lie groups into near optimal spectral gaps for finite covers of the corresponding compact orbifold.

The random cover model is also well adapted to constructing sequences of surfaces with special properties and with prescribed spectral gaps. Indeed, \textcite{LouderMagee} proved that every closed hyperbolic surface admits a sequence of random finite degree covers with a near optimal relative spectral gap as well, thus yielding arithmetic examples. \textcite{Magee_bass_notes,HidePetri} used random covers to prove that in fact all real numbers in $[0,\frac{1}{4}]$ can be approximated by bass notes of arithmetic surfaces.

The fact that uniform random finite-degree covers of a closed hyperbolic surface have a near optimal relative spectral gap was recently settled by  \textcite{MageePudervanHandel} and \textcite{HideMaceraThomas2} obtained polynomial error rates for the lower bound on spectral gap. 

\textcite{Naud2,Maoz2} proved that also for random covers, certain eigenvalue statistics follow those of large GOE and GUE matrices. \textcite{KimTao} proved a polynomial convergence rate for the density of eigenvalues in a fixed window to the density predicted by the Plancherel measure. Moreover, they proved a polynomial decay rate for the $L^p$ norms of eigenfunctions.

Finally we mention a more topological result. \textcite{KlukowskiMarkovic} proved that random covers have what they call the Putman--Wieland property with high probability as $n\to\infty$. This property was introduced by \textcite{PutmanWieland} in relation to the open question of whether mapping class groups of closed surfaces of genus at least three have virtual positive first Betti number.

\subsection{Other models}

Multiple other models for random surfaces have been studied for various purposes. \textcite{MageeNaud,CalderonMageeNaud} studied random finite covers of Schottky surfaces and proved a spectral gap for their resonances. \textcite{Mathien_diameter}, generalizing a model introduced by \textcite{BudzinskiCurienPetri_diameter}, studied surfaces obtained by randomly gluing pairs of pants together and then randomly choosing their twists and lengths. This can be thought of as a toy model for the Weil--Petersson measure normalized by the Mirzakhani function, see the works by \textcite{AranaHerrera,Liu}. \textcite{LiuPetri} defined a model of random hyperbolic surfaces with large systoles. \textcite{Moy1,HideMoyNaud,Moy2,BallmannMondalPolymerakis} determined uniform spectral gaps of random covers of surfaces of variable curvature. \textcite{MasurRafiRandecker,BowenRafiVallejos} studied the geometry of random translation surfaces of large genus. \textcite{Song,AnconaLabourieRoigSanchisToulisse} used random surfaces in spheres to show the existence of negatively curved minimal surfaces in high dimensional spheres. Finally, \textcite{Wright_covers} proved results on the geometry of random infinite degree covers of hyperbolic surfaces.

\section{A near optimal spectral gap through random covers (Hide--Magee)}\label{sec_HM}

The goal of the rest of this text is to discuss three results on near optimal spectral gaps, starting with the result by \textcite{HideMagee}. In what follows $X=\Gamma\backslash\mathbb{H}^2$ will be a fixed non-compact orientable hyperbolic surface of finite area and $X_n\to X$ will be a random $n$-sheeted cover of $X$, corresponding to a uniformly random homomorphism $\varphi_n\in\mathrm{Hom}(\Gamma,\mathfrak{S}_n)$, as described in Section \ref{sec_rand_covers}.

The spectrum of the Laplacian on $L^2(X)$ lifts to $L^2(X_n)$. Indeed, any $L^2$-function on $X$ defines an $L^2$-function on $X_n$ by composition with the covering map. Such functions are exactly the functions on $X_n$ that are constant on the fibers of the covering map. We will write $L^2_{\mathrm{new}}(X_n)$ for the orthogonal complement of these functions. So we have
\[
L^2(X_n) \simeq L^2(X) \stackrel{\perp}{\oplus} L^2_{\mathrm{new}}(X_n).
\]
The Laplacian operator preserves this decomposition, so its spectrum decomposes as
\[
\mathrm{spec}(\Delta_{X_n}) = \mathrm{spec}_{\mathrm{old}}(\Delta_{X_n}) \cup \mathrm{spec}_{\mathrm{new}}(\Delta_{X_n}),
\]
where $\mathrm{spec}_{\mathrm{old}}(\Delta_{X_n}) = \mathrm{spec}(\Delta_X)$ is the spectrum on lifts of functions on $X$ and $\mathrm{spec}_{\mathrm{new}}(\Delta_{X_n})$ is the spectrum on $L^2_{\mathrm{new}}(X_n)$. We think of all these spectra as multi-sets. That is, elements are allowed to have multiplicity and these multiplicities add when we take a union.

The main theorem of Hide and Magee's paper is:
\begin{theo}[Hide--Magee]
Let $X$ be an orientable non-compact hyperbolic surface of finite area and $X_n$ a random degree $n$ cover of $X$. Then for every $\varepsilon>0$, with high probability as $n\to\infty$, we have that
\[
\mathrm{spec}_{\mathrm{new}}(X_n) \subset \left(\frac{1}{4}-\varepsilon,\infty\right). 
\]
\end{theo}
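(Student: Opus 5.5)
The plan is to turn the strong convergence theorem of Bordenave--Collins (Theorem \ref{thm_BordenaveCollins}) into a spectral statement via a parametrix (approximate resolvent) construction, following the model of the Guillopé--Zworski resolvent construction for surfaces with cusps. Since $X$ is non-compact of finite area, $\Gamma\simeq F_r$ is free. The random cover therefore corresponds to a uniformly random $\varphi_n\in\mathrm{Hom}(F_r,\mathfrak{S}_n)$, and Bordenave--Collins applies directly. We identify $L^2_{\mathrm{new}}(X_n)$ with $L^2$-sections of the flat bundle $E_n=\Gamma\backslash(\mathbb{H}^2\times\mathbb{C}^n_0)$, with holonomy $\rho_n=\mathrm{std}_n\circ\varphi_n$. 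The goal becomes: for $s=\frac12+\sigma$ with $\sigma\in(\sqrt{\varepsilon'},\frac12]$, the resolvent $R_n(s)=(\Delta_{E_n}-s(1-s))^{-1}$ exists as a bounded operator with high probability. Since $s(1-s)$ then ranges over $[0,\frac14-\varepsilon']$, no new spectrum lies there.

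\textbf{Interior parametrix.} Fix a truncation: write $X=K\cup\bigcup_{\mathbf c}C_{\mathbf c}$ with $K$ compact and $C_{\mathbf c}$ cusp neighborhoods, and choose cut-offs $\chi_0,\chi_1$ adapted to this. In the compact part, use the resolvent $R_{\mathbb{H}^2}(s)$ of $\mathbb{H}^2$, whose kernel is explicit and decays like $e^{-s\,d(z,w)}$. I would replace it by a cut-off version $R^{(T)}_{\mathbb{H}^2}(s)$ whose kernel is supported in $\{d(z,w)\le T\}$, at the cost of an error of size about $e^{-(s-\frac12)T}$. Pushed to the bundle, the interior parametrix is
\[
\chi_0 R^{(T)}_{\mathbb{H}^2}(s)\chi_0=\sum_{\gamma\in\Gamma} a_\gamma(s)\otimes\rho_n(\gamma),
\]
a \emph{finite} sum, because $K$ is compact and the kernel is truncated. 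Its error term $(\Delta-s(1-s))\chi_0R^{(T)}\chi_0-\chi_0$ is also a finite sum of the form $\sum_\gamma b_\gamma\otimes\rho_n(\gamma)$, where the $b_\gamma$ are operators on $L^2(F)$, with $F$ a fundamental domain.

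\textbf{Cusp parametrix.} In each cusp, new functions have vanishing zeroth Fourier mode along the cusp. This holds because the boundary holonomy acts without fixed vectors on the relevant part, or, more precisely, because one separates the zero mode: the zero mode on each lifted cusp of $X_n$ corresponds to $\ker(\rho_n(p)-1)$ for the parabolic generator $p$. I would treat this with an explicit model resolvent on $C_t$ restricted to nonzero Fourier modes. That part of the spectrum starts at $\gtrsim t^2$, as in the proof of Lemma \ref{lem_spec_thrice_punc}, so its error is uniformly small once $t$ is large. The zero modes give the model cusp resolvent, which is bounded for $\mathrm{Re}(s)>\frac12$. Gluing the pieces yields $(\Delta-s(1-s))M_n(s)=1+\mathcal{L}_n(s)$. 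Here $\mathcal{L}_n$ consists of small cusp terms plus $\sum_\gamma b_\gamma(s)\otimes\rho_n(\gamma)$.

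\textbf{Key step and main obstacle.} It suffices to show $\|\mathcal{L}_n(s)\|<1$, or $\|\mathcal{L}_n(s)^2\|<\frac12$ (squaring helps handle the compact-to-compact error, which is not small in itself). The norm of the finite-sum part is an element of $M_N(\mathbb{C})\otimes\mathbb{C}[F_r]$ with operator coefficients. To apply strong convergence, I would first approximate the compact operators $b_\gamma$ in norm by finite-rank ones. Strong convergence for matrix-coefficient elements then follows from the scalar statement by standard arguments, such as linearization or the Pisier/Haagerup--Thorbjørnsen trick. This gives $\|\sum b_\gamma\otimes\rho_n(\gamma)\|\to\|\sum b_\gamma\otimes\rho_{\mathrm{reg}}(\gamma)\|$ in probability, uniformly over a finite net of $s$, with continuity in $s$ handling the rest. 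Under $\rho_{\mathrm{reg}}$, the same construction is exactly the parametrix on $\mathbb{H}^2$ itself, minus cusp regions. The limiting error operator therefore has norm controlled by the truncation error $e^{-(s-\frac12)T}$ together with the cusp errors, and is $<\frac12$ after choosing $T$ and $t$ large depending on $\varepsilon$. The hardest part is bookkeeping: making the parametrix uniform in $s$ down to $\mathrm{Re}(s)=\frac12+\sqrt{\varepsilon'}$, and handling the non-compact cusp where strong convergence gives nothing. This is why the cusp errors must be made small deterministically. Once $1+\mathcal{L}_n$ is invertible, $R_n(s)=M_n(s)(1+\mathcal{L}_n)^{-1}$ is a bounded right inverse; by self-adjointness, $s(1-s)\notin\mathrm{spec}_{\mathrm{new}}$, which proves the theorem.
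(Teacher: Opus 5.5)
The overall architecture matches the paper's proof: a truncated $\mathbb{H}^2$-resolvent in the thick part, a model resolvent in the cusps, the thick error written as a finite sum $\sum_\gamma b_\gamma\otimes\rho_n(\gamma)$ with compact coefficients, finite-rank approximation with matrix-valued strong convergence, comparison with the $\rho_{\mathrm{reg.}}$-operator on $\mathbb{H}^2$, a net in $s$, and a Neumann series. However, two steps fail as you wrote them.

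\textbf{The two-sided interior cut-off.} You have
\[
\Big(\Delta-s(1-s)\Big)\chi_0R^{(T)}_{\mathbb{H}^2}(s)\chi_0=\chi_0^2+\chi_0K^{(T)}_{\mathbb{H}^2}(s)\chi_0+[\Delta,\chi_0]\,R^{(T)}_{\mathbb{H}^2}(s)\chi_0 .
\]
Under $\rho_{\mathrm{reg.}}$ the last term becomes $[\Delta,\widetilde{\chi}_0]R^{(T)}_{\mathbb{H}^2}(s)\widetilde{\chi}_0$, where $\widetilde{\chi}_0$ is the lift to $\mathbb{H}^2$. For large $T$ this is close to the fixed non-zero operator $[\Delta,\widetilde{\chi}_0]R_{\mathbb{H}^2}(s)\widetilde{\chi}_0$, so it does not become small as $T$ grows. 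Your claim that the limiting error is controlled by $e^{-(s-\frac12)T}$ plus cusp errors is therefore false for this construction. Squaring does not help either: $\mathcal{L}_n(s)^2$ contains the square of this term, and nothing makes it small uniformly in $s$.

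The paper avoids this by putting no cut-off on the left. The kernel $r^{(T)}(s;x,y)\cdot\mathrm{Id}_{\mathbb{C}^n_0}$ depends only on $\mathrm{d}(x,y)$, so $R^{(T)}_{\mathbb{H}^2,n}(s)$ preserves $\rho_n$-equivariance and acts on all of $X_n$. Consequently $M^{\mathrm{int}}_{X_n}(s)=R^{(T)}_{\mathbb{H}^2,n}(s)\chi^{\mathrm{int}}_n$ yields exactly $\chi^{\mathrm{int}}_n+K^{(T)}_{\mathbb{H}^2,n}(s)\chi^{\mathrm{int}}_n$, with no commutator term. The $\gamma$-sum is still finite, because $\chi^{\mathrm{int}}$ has compact support on $X$, the kernel is truncated, and $\mathcal{F}$ is locally finite. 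The limit operator is $K^{(T)}_{\mathbb{H}^2}(s)\chi^{\mathrm{int}}$, whose norm is at most $\frac15$ for $T\ge T_0(s_0)$.

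\textbf{The cusp parametrix.} Here smallness must hold \emph{uniformly over all covers}, since strong convergence gives nothing in the cusps. Your bound $\gtrsim t^2$ for the non-zero modes is not uniform in $n$.
A cusp of $X_n$ of width $k$, corresponding to a $k$-cycle of $\varphi_n(p)$ with $p$ parabolic, carries modes $e^{2\pi i jx/k}$. On $\{y>t\}$ their Rayleigh quotient is only bounded below by $4\pi^2j^2t^2/k^2$. The widths are unbounded: if $p$ is primitive, $\varphi_n(p)$ is a uniform permutation and has a cycle of length $>n/2$ with probability tending to $\log 2$. Moreover, $\ker(\rho_n(p)-1)$ has dimension equal to the number of cycles minus one, so new functions do have zero modes. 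For those modes, boundedness of the model resolvent does not make the gluing error small.

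The paper uses no mode splitting. It applies the resolvent $R_{C_0}(s)$ of the parabolic cylinder to all modes at once. Every cusp of every cover is isometric to a piece of $C_0$, and $C_0$ has no spectrum below $\frac14$, so this resolvent has norm at most $(s-\frac12)^{-2}$ for every cover. The bound $\lVert E^{\mathrm{cusp}}_Y(s)\rVert\le\frac15$ for every degree $n$ cover $Y$ then comes from the choice of $\chi^\pm$ alone, depending only on $s_0$. One way to achieve this is to let $\chi^+$ vary slowly in $\log y$, so that $\lVert\nabla\chi^+\rVert_\infty$ and $\lVert\Delta\chi^+\rVert_\infty$ are both $\ll(s_0-\frac12)^2$. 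Since $\chi^-$ only multiplies on the right, its derivatives never enter.
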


In what follows we will sketch the proof of this theorem.

\subsection{The strategy}\label{sec_HM_strategy}

The main goal of the proof is to build, for every $s_0=s_0(\varepsilon)>\frac{1}{2}$ and every $s\in [s_0,1]$ a bounded \textbf{resolvent operator},
\[
R_{X_n}(s):L^2_{\mathrm{new}}(X_n) \to H^2_{\mathrm{new}}(X_n),
\]
where $H^2_{\mathrm{new}}(X_n)$ is a suitable Sobolev space, such that
\begin{equation}\label{eq_resolvent}
\Big(\Delta_{X_n}-s(1-s)\Big) \circ R_{X_n}(s) = \mathrm{Id}_{L^2_{\mathrm{new}}(X_n)},
\end{equation}
thus showing that $\mathrm{spec}_{\mathrm{new}}(X_n) \subset \left(s_0\cdot (1-s_0),\infty\right)$.

This resolvent is built through a \textbf{parametrix} construction. That is, instead of trying to find an exact solution to \eqref{eq_resolvent}, one looks for an operator $M_{X_n}(s):L^2_{\mathrm{new}}(X_n) \to H^2_{\mathrm{new}}(X_n)$ such that
\[
\Big(\Delta_{X_n}-s(1-s)\Big) \circ M_{X_n}(s) = \mathrm{Id}_{L^2_{\mathrm{new}}(X_n)} + E_{X_n}(s)
\]
and where $E_{X_n}(s):L^2_{\mathrm{new}}(X_n)\to L^2_{\mathrm{new}}(X_n)$ has operator norm $\lVert E_{X_n}(s) \rVert_\infty < 1$ so that  $\mathrm{Id}_{L^2_{\mathrm{new}}(X_n)} + E_{X_n}(s)$ is invertible and we can set
\[
R_{X_n}(s) = M_{X_n}(s) \circ \left(\mathrm{Id}_{L^2_{\mathrm{new}}(X_n)} + E_{X_n}(s)\right)^{-1}.
\]
The operator $M_{X_n}(s)$ will be built by ``patching together'' a parametrix in the thick part of $X_n$ and one in the thin part. For the thin part, coming from cusps, one uses a parametrix based on the resolvent on the quotient of $\mathbb{H}^2$ by a cyclic group generated by a parabolic transformation. In the thick part, the strong convergence theorem of Bordenave--Collins (Theorem \ref{thm_BordenaveCollins}) is applied. In order to make this work, the operator $E_{X_n}(s)$ will also need to be compact.

\subsection{Building the parametrix in the cusps}

As indicated above, we will build our approximate resolvent using two different procedures, depending on whether we're close to the cusps of $X_n$ or not, and patch the results together. Concretely, $M_{X_n}(s)$ will be of the form:
\[
M_{X_n}(s) = M_{X_n}^{\mathrm{int}}(s) + M_{X_n}^{\mathrm{cusp}}(s).
\]
We start with the cusp parametrix and recall some notation from Section \ref{sec_thrice_punc}. There, we defined the following cusp neighborhoods.
\[
C_t = \left\{ z\in \mathbb{H}^2;\; \mathrm{Im}(z)>t\right\} \; \Big/ \; \left\langle \left[\begin{array}{cc}
1 & 1 \\
0 & 1
\end{array}\right] \right\rangle
\]
We observe that $C_0$ is a complete hyperbolic surface of infinite area. It follows from a theorem by \textcite{Brooks_amenable} that the Laplacian on $C_0$ has no spectrum below $\frac{1}{4}$. \textcite[Lemma 4.2]{HideMagee} give a direct proof of the same fact. In particular, we have a holomorphic family of resolvents $s\mapsto R_{C_0}(s)$ for $\mathrm{Re}(s)>\frac{1}{2}$ such that
\[
\Big(\Delta_{C_0}-s(1-s)\Big) \circ R_{C_0}(s) = \mathrm{Id}_{L^2(C_0)}.
\]

The idea now is to use a cut-off function to turn these resolvents into resolvents near the cusps. The collar lemma (see the book by \textcite[Theorem 4.4.6]{Buser_book}) implies that around every cusp of the base surface $X$ we can find an isometrically embedded copy of $C_{\frac{1}{2}}$. Moreover, these cusp-neighborhoods are disjoint. We define
\[
\chi^- = \sum_{C \text{ cusp of }X}\chi^-_C \quad \text{ and }\chi^+ = \sum_{C \text{ cusp of }X}\chi^+_C 
\]
where $\chi^-_C$ and $\chi^+_C$ are smooth functions that are supported in the isometric copy of $C_1$ in $C$ and that are identically equal to $1$ on a neighborhood of $\infty$. Moreover, $\chi^-_C\cdot \chi^+_C = \chi^-_C$, i.e. the support of $\chi^-_C$ is contained in the set on which $\chi^+_C\equiv 1$. 

We now define $\chi^\pm_n = \chi^\pm \circ \pi_n$, where $\pi_n:X_n\to X$ is the covering map. This allows us to define
\[
M_{X_n}^{\mathrm{cusp}}(s): f \mapsto \chi^+_n \cdot R_{C_0}(s) \Big(\chi^-_n \cdot f\Big),
\]
There is a slight abuse of notation above: $\chi^-_C \cdot f$ can be thought of as a finite sum of functions defined on subsets of $C_0$. We may extend these functions by $0$ to functions on $C_0$ and then apply $R_{C_0}(s)$ to them. When we multiply with $\chi^+$ afterwards, we obtain functions that have a well defined extension by $0$ to $X_n$.

\textcite[(4.7) and Proposition 4.4]{HideMagee} prove that the functions $\chi_C^\pm$ can be chosen so that the following holds:
\begin{prop}\label{prp_cusp_parametrix}
Given any $s_0 > \frac{1}{2}$ there exist a choice of functions $\chi_C^\pm$ such that for any $n$ and any degree $n$ cover $\pi:Y\to X$ and any $s\in (s_0,1]$, 
\[
\Big(\Delta-s(1-s)\Big) \circ M_Y^{\mathrm{cusp}}(s) = \chi^-_Y + E^{\mathrm{cusp}}_Y(s)
\]
where $\chi_Y^- = \chi^-\circ \pi$ and $E_Y^{\mathrm{cusp}}(s)$ is a bounded operator on $L^2_{\mathrm{new}}(Y)$ whose operator norm satisfies
\[
\lVert E_Y^{\mathrm{cusp}}(s)\rVert_{L^2_{\mathrm{new}}(Y)} \leq \frac{1}{5}.
\]
\end{prop}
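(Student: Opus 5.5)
Since $(\Delta - s(1-s))R_{C_0}(s) = \mathrm{Id}$ on $L^2(C_0)$ and $\chi^+_n\chi^-_n = \chi^-_n$, I would compute with the product rule. For $f\in L^2_{\mathrm{new}}(Y)$,
\[
\Big(\Delta-s(1-s)\Big)\Big(\chi^+ R_{C_0}(s)(\chi^- f)\Big) = \chi^+\chi^- f + [\Delta,\chi^+]\,R_{C_0}(s)(\chi^- f) = \chi^- f + E^{\mathrm{cusp}}_Y(s)f,
\]
where $[\Delta,\chi^+]u = (\Delta\chi^+)u - 2\nabla\chi^+\cdot\nabla u$. So $E^{\mathrm{cusp}}_Y(s) = [\Delta,\chi^+]R_{C_0}(s)\chi^-$.

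The problem thus reduces to bounding this explicit operator on the model cusp $C_0$. Everything is local to the cusp neighbourhoods of $Y$, and each such neighbourhood is a copy of a cusp $C_{t}$ of width $k$ (a $k$-fold cover of a cusp of $X$). Since $\pi^{-1}$ of the cusp region is a disjoint union of such pieces, it suffices to prove a bound for each piece that is uniform in the width $k$. Preserving $L^2_{\mathrm{new}}$ is automatic because every operator involved commutes with the deck action.

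To get the uniform bound, I would choose $\chi^\pm$ to depend only on $y=\mathrm{Im}(z)$. Then $[\Delta,\chi^+]$ involves only $\partial_y$ and multiplication by functions supported in a transition band $\{a\le y\le b\}$ with $b$ large. I would then decompose into Fourier modes in $x$; on a width-$k$ cusp these are the frequencies $2\pi m/k$.

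For the zero mode, the resolvent is an explicit one-dimensional ODE resolvent with kernel built from $y^s$ and $y^{1-s}$. For $s\ge s_0>\tfrac12$ I would estimate $\|[\Delta,\chi^+]R\chi^-\|$ directly, taking $\chi^+$ to vary logarithmically slowly: for example $\chi^+(y)=\psi(\log y/L)$ with $L$ large, so that $|y\partial_y\chi^+|$ and $|y^2\partial_y^2\chi^+|$ are $O(1/L)$. The separation between $\mathrm{supp}\,\chi^-$ and $\mathrm{supp}\,\nabla\chi^+$ should then make the kernel, which decays like $(y/y')^{s-1/2}$ relative to the $L^2(dy/y^2)$ weight, small; the gap $s_0-\tfrac12>0$ supplies this decay. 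For nonzero modes with frequency $\xi\neq 0$, the one-dimensional operator $-y^2\partial_y^2 + y^2\xi^2 - s(1-s)$ is more coercive. Here I would use energy estimates to bound $\|y\partial_y R\|$ and $\|R\|$ by $O(1)$ uniformly. The factor $O(1/L)$ coming from the derivatives of $\chi^+$ then makes these contributions small as well. Finally, I would sum over modes by orthogonality and take $L$ (depending on $s_0$) large enough that the total norm is at most $\tfrac15$.

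The main obstacle is the zero mode. There $R_{C_0}(s)$ has norm blowing up as $s\to\tfrac12$, so smallness must come from the geometric separation and the logarithmic cutoff scale rather than from any smallness of the resolvent. I would first try the explicit kernel with a Schur test, choosing the cut-off scale in terms of $s_0-\tfrac12$. This also forces the cusp neighbourhoods to be large, which is why the statement lets $\chi^\pm$ depend on $s_0$.
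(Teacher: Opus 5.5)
Your proof is correct. The survey itself gives no argument for this proposition and only cites Hide--Magee, so the useful comparison is with what the estimate actually needs. Measured that way, your route is longer than necessary: the Fourier decomposition and the separate zero-mode analysis can be dropped, and the zero mode is not really the main obstacle.

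Your energy estimate for the modes with $\xi\neq 0$ never actually uses $\xi\neq 0$. On a model cusp of any width, the one-dimensional Hardy inequality $\int_0^\infty\lvert\partial_y u\rvert^2\,dy\geq\frac14\int_0^\infty\lvert u\rvert^2\,\frac{dy}{y^2}$ from the proof of Lemma~\ref{lem_spec_H2}, applied on vertical lines, gives $\Delta\geq\frac14$. Hence $\lVert R(s)\rVert\leq(s-\frac12)^{-2}$. For $u=R(s)f$ one has $\lVert\nabla u\rVert^2=\langle f,u\rangle+s(1-s)\lVert u\rVert^2$, which bounds $\lVert\nabla R(s)\rVert$ by a constant depending only on $s_0$, for all modes at once, including $\xi=0$.

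This is in fact the only available source of uniformity in the width $k$. The frequencies $2\pi m/k$ become arbitrarily small, and on the whole model cusp (all $y>0$) the term $y^2\xi^2$ gives no uniform coercivity, so ``more coercive'' buys nothing. With $\chi^+=\psi(\log y/L)$ one gets directly
\[
\lVert E^{\mathrm{cusp}}_Y(s)\rVert\leq\lVert\Delta\chi^+\rVert_\infty\,\lVert R(s)\rVert+2\,\lVert\nabla\chi^+\rVert_\infty\,\lVert\nabla R(s)\rVert\leq\frac{C(s_0)}{L},
\]
uniformly in $k$ and in $s\in(s_0,1]$. No explicit kernel, Schur test or separation of supports is needed. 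Your zero-mode computation is valid, but it only reproduces the same $O\big(L^{-1}(s_0-\frac12)^{-2}\big)$ bound on a single mode.

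One small correction: a general cover $Y\to X$ has no deck group. The right reason $E^{\mathrm{cusp}}_Y(s)$ preserves $L^2_{\mathrm{new}}(Y)$ is that each factor intertwines with the fiber sum $\pi_*$. The $\chi^\pm_Y$ are pulled back from $X$, and the resolvent of a width-$k$ cusp pushes forward to that of the width-$1$ cusp.
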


In the proposition above, the constant $\frac{1}{5}$ is not optimal but suffices to obtain the error bound we need below.

\subsection{Function spaces}\label{sec_function_spaces}

Before we describe the interior parametrix, we start by making the decomposition into new and old functions more explicit. First of all, writing $\mathcal{F}\subset\mathbb{H}^2$ for a closed and connected fundamental domain for $X$, we can for instance write $C^\infty_c(X_n)=C^\infty_{c,\mathrm{old}}(X_n)\oplus C^\infty_{c,\mathrm{new}}(X_n)$, where
\[
C^\infty_{c,\mathrm{old}}(X_n)
\simeq \left\{ f\in C^\infty(\mathbb{H}^2\times [n]);\; \begin{array}{c} 
f \text{ is } \Gamma \text{-invariant, }\mathrm{supp}(f)\cap(\mathcal{F}\times[n]) \text{ is compact,} \\ 
\text{and } f(x,i) = f(x,j) \text{ for all } i,j \in [n], x\in\mathbb{H}^2 
\end{array}
\right\} 
\]
and
\[
C^\infty_{c,\mathrm{new}}(X_n)\simeq 
 \left\{ f\in C^\infty(\mathbb{H}^2\times [n]);\; \begin{array}{c} 
f \text{ is } \Gamma \text{-invariant, }\mathrm{supp}(f)\cap(\mathcal{F}\times[n]) \text{ is} \\ 
\text{compact, and } \sum\limits_{i=1}^n f(x,i) = 0 \text{ for all } x\in\mathbb{H}^2
\end{array}
\right\} .
\]
The latter can also be identified with a space of vector valued functions
\[
C^\infty_{c,\mathrm{new}}(X_n)\simeq 
\left\{ f \in C^\infty(\mathbb{H}^2,\mathbb{C}_0^n);\; \begin{array}{c} 
\mathrm{supp}(f)\cap \mathcal{F} \text{ is compact, and} \\
f(\gamma \cdot x) = \rho_n(\gamma) \cdot f(x) \text{ for all } x\in\mathbb{H}^2,\; \gamma\in\Gamma
\end{array}
 \right\},
\]
where $\rho_n = \mathrm{std}_n\circ\varphi_n:\Gamma\to \mathrm{U}(\mathbb{C}_0^n)$ (for the definition of $ \mathrm{std}_n$, see Section \ref{sec_bordenavecollins}). The corresponding $L^2$-spaces $L^2_{\mathrm{new}}(X_n,\mathbb{C}_0^n)$ and Sobolev spaces $H^2_{\mathrm{new}}(X_n,\mathbb{C}_0^n)$ are the completions of these spaces with respect to the norms
\[
\lVert f \rVert_{L^2} = \int_{\mathcal{F}} \lVert f(x) \rVert^2_{\mathbb{C}^n_0} d\mu(x) \quad \text{and} \quad  \lVert f \rVert_{H^2}  := \lVert f \rVert_{L^2} + \lVert \Delta_{X_n}f \rVert_{L^2},
\]
where we have written $\mu$ for the area measure on $\mathbb{H}^2$.

\subsection{Building the parametrix away from the cusps} 

The largest part of the paper is dedicated to constructing $M_{X_n}^{\mathrm{int}}(s)$. We will sketch how this works now.

\subsubsection{Resolvents} 
We will construct $M_{X_n}^{\mathrm{int}}(s)$ using the resolvent on the hyperbolic plane. First we recall that this operator can be made very explicit. Indeed, for $\mathrm{Re}(s)>\frac{1}{2}$, the resolvent $R_{\mathbb{H}^2}(s)= (\Delta_{\mathbb{H}^2}-s(1-s))^{-1} : L^2(\mathbb{H}^2) \to L^2(\mathbb{H}^2)$ is an integral operator given by
\[
\Big(R_{\mathbb{H}^2}(s) f\Big)(x) = \int_{\mathbb{H}^2} r(s;x,y)\cdot f(y)\; d\mu(y)
\]
with
\[ 
r(s;x,y) = \frac{1}{4\pi} \int_0^1 \frac{t^{s-1}(1-t)^{s-1}}{(\mathrm{d}(x,y)-t)^s}\;dt,
\]
where $\mathrm{d}:\mathbb{H}^2\times\mathbb{H}^2 \to [0,\infty)$ denotes the hyperbolic distance function. This can for instance be found in the book by \textcite[Proposition 4.2 and Theorem 4.3]{Borthwick_hyperbolic}. In their original approach, Hide and Magee use this explicit shape in multiple computations, that we will not go over in detail here.

Because of the combination with Bordenave and Collins's theorem below, we will first need to define a cut-off. This will be the integral operator $R^{(T)}_{\mathbb{H}^2}(s)$ with kernel
\[
r^{(T)}(s;x,y) = \chi_0\Big(\mathrm{d}(x,y)-T\Big) \cdot r(s;x,y),
\]
where $\chi_0:\mathbb{R} \to [0,1]$ is a smooth function such that
\[
\chi_0(x) = 1 \text{ if }x \leq 0 \quad \text{and} \quad \chi_0(x) = 0 \text{ if }x\geq 1.
\]
In particular, if $\mathrm{d}(x,y)\geq T+1$ then $r^{(T)}(s;x,y) =0 $.

We now define the operator $K^{(T)}_{\mathbb{H}^2}(s)$ by
\[
\Big(\Delta_{\mathbb{H}^2}-s(1-s)\Big)\circ R^{(T)}_{\mathbb{H}^2}(s) = \mathrm{Id}+K^{(T)}_{\mathbb{H}^2}(s).
\]
It turns out that this is an integral operator with an explicit kernel $k^{(T)}(s;x,y)$ (see (5.8) in Hide and Magee's paper).

In order to deal with covers of degree $n$, we define integral operators $R_{\mathbb{H}^2,n}^{(T)}(s)$ and  $K_{\mathbb{H}^2,n}^{(T)}(s)$. Their action on smooth compactly supported $\mathbb{C}_0^n$-valued functions is defined by the kernels
\[
r_{\mathbb{H}^2,n}^{(T)}(s) = r_{\mathbb{H}^2}^{(T)}(s)\cdot \mathrm{Id}_{\mathbb{C}_0^n} \quad \text{and} \quad k_{\mathbb{H}^2,n}^{(T)}(s) = k_{\mathbb{H}^2}^{(T)}(s)\cdot \mathrm{Id}_{\mathbb{C}_0^n}
\]
respectively, where we have used the identifications of Section \ref{sec_function_spaces}.

Above, we defined cut-off functions $\chi^\pm_n$ in the cusps. We will write
\[
\chi^{\mathrm{int}}_n = 1-\chi^-_n
\]
and think of this as a $\Gamma$-invariant function $\mathbb{H}^2\to \mathbb{C}_0^n$. Because it's a lift of a function on the base surface $X$, all its coordinates are the same and we can think of it as a scalar valued function.

\textcite[Lemma 5.5]{HideMagee} prove that $R_{\mathbb{H},n}^{(T)}(s)\cdot \chi^{\mathrm{int}}_n$, where we have identified the function  $\chi^{\mathrm{int}}_n$ with the corresponding multiplication operator, extends to a bounded operator
\begin{equation}\label{eq_parametrix}
R_{\mathbb{H},n}^{(T)}(s)\cdot \chi^{\mathrm{int}}_n : L^2(X_n,\mathbb{C}_0^n) \to H^2(\mathbb{H}^2,\mathbb{C}_0^n)
\end{equation}
and likewise $K^{(T)}_{\mathbb{H}^2,n}\cdot \chi^{\mathrm{int}}$ extends to a bounded operator on $L^2(\mathbb{H}^2,\mathbb{C}_0^n)$ such that moreover
\[
\Big(\Delta-s(1-s)\Big) \circ R_{\mathbb{H}^2,n}^{(T)}(s)\cdot \chi^{\mathrm{int}}_n = \chi^{\mathrm{int}}_n + K_{\mathbb{H}^2,n}^{(T)}(s)\cdot \chi^{\mathrm{int}}_n.
\]
These two operators will serve as our parametrix and our error respectively. That is, we set
\[
M_{X_n}^{\mathrm{int}}(s) = R^{(T)}_{\mathbb{H}^2,n}(s)\cdot \chi^{\mathrm{int}}_n \quad \text{and} \quad E_{X_n}^{\mathrm{int}} = K^{(T)}(s)\cdot \chi^{\mathrm{int}}_n,
\]
for some $T$ that will be determined below.

\subsubsection{Random matrices}

The ultimate goal is to use the fact that the random unitary representations $\rho_n=\mathrm{std}_n\circ\varphi_n:\Gamma \to \mathrm{U}(\mathbb{C}_0^n)$ converge strongly to the left-regular representation $\rho_{\mathrm{reg.}}:\Gamma \to \ell^2(\Gamma)$. We will use a slight strengthening of Theorem \ref{thm_BordenaveCollins}. Namely, we will need to take matrix coefficients instead of scalar coefficients. That is, we need the fact that $\lVert \rho_n(z) \rVert_\infty \to \lVert \rho_{\mathrm{reg.}}(z)\rVert_\infty$ for all $z\in \mathrm{Mat}_N(\mathbb{C})\otimes_{\mathbb{C}}\mathbb{C}[\Gamma]$. In other words, we have the following consequence of Theorem \ref{thm_BordenaveCollins}:

\begin{prop}\label{prop_strong_conv}
Let $\rho_n = \mathrm{std}_n\circ \varphi_n$, where $\varphi_n \in\mathrm{Hom}(\Gamma,\mathfrak{S}_n)$ is uniformly random. Then
\[
\left\lVert\sum_{\gamma \in \Gamma} a_\gamma \otimes \rho_n(\gamma) \right\rVert_{\mathbb{C}^N\otimes \mathbb{C}^n_0} \quad \stackrel{\text{in probability}}{\longrightarrow}  \quad \left\lVert\sum_{\gamma \in \Gamma} a_\gamma \otimes \rho_{\mathrm{reg.}}(\gamma) \right\rVert_{\mathbb{C}^N\otimes \ell^2(\Gamma)} \quad \text{as }n\to\infty
\]
for all $N\in\mathbb{N}$ and all sequences $(a_\gamma)_{\gamma\in\Gamma} \in \mathrm{Mat}_N(\mathbb{C})^\Gamma$ with at most finitely many non-zero entries. 
\end{prop}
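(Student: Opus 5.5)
The plan is to deduce the matrix-coefficient statement from the scalar one (Theorem \ref{thm_BordenaveCollins}) by a linearization/positivity argument. Since $\Gamma$ is the fundamental group of a non-compact finite area surface, $\Gamma\simeq F_d$ is free, so Theorem \ref{thm_BordenaveCollins} applies directly. Write $z=\sum_\gamma a_\gamma\otimes\gamma\in\mathrm{Mat}_N(\mathbb{C})\otimes\mathbb{C}[\Gamma]$.

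\textbf{Step 1: reduce to self-adjoint elements and moments.} Because $\lVert \rho(z)\rVert^2=\lVert\rho(z^*z)\rVert$ for any representation $\rho$, we may replace $z$ by the positive element $w=z^*z$. For positive $w$, $\lVert \rho(w)\rVert=\lim_{p\to\infty}\big(\mathrm{tr}\,\rho(w)^{p}\big)^{1/p}$ in finite dimensions, and in the limiting C*-algebra the norm is the limit of $\tau\big((\mathrm{tr}_N\otimes\tau_{\mathrm{reg}})(w^{p})\big)^{1/p}$, using that the trace $\tau_{\mathrm{reg}}(x)=\langle x\delta_e,\delta_e\rangle$ on $C^*_{\mathrm{red}}(\Gamma)$ is faithful.

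\textbf{Step 2: lower bound.} The lower bound $\liminf\lVert\rho_n(z)\rVert\geq\lVert\rho_{\mathrm{reg.}}(z)\rVert$ in probability follows from weak convergence: for each fixed $\gamma\neq e$, $\frac1n\mathrm{Tr}\,\rho_n(\gamma)\to 0$ in probability (fixed points of $\varphi_n(\gamma)$ are $O(1)$ in expectation, by the result of Nica quoted in Section \ref{sec_Friedman}). Hence normalized traces of every polynomial in $w$ converge to those of the regular representation, and with faithfulness of $\tau_{\mathrm{reg}}$ this forces the limiting norm to be at least the regular one, up to $\varepsilon$.

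\textbf{Step 3: upper bound, the main obstacle.} Here I would use the standard fact that strong convergence of $\rho_n$ on $\mathbb{C}[\Gamma]$ means that the map $\rho_{\mathrm{reg.}}(x)\mapsto\rho_n(x)$ is asymptotically isometric. Hence it extends, asymptotically, to a $*$-homomorphism from $C^*_{\mathrm{red}}(\Gamma)$ to an ultraproduct $\prod_\omega \mathrm{Mat}_n(\mathbb{C})$, which is injective because it is isometric on a dense subalgebra. Tensoring an injective $*$-homomorphism with $\mathrm{Mat}_N(\mathbb{C})$ stays injective, hence isometric. Since $\mathrm{Mat}_N$ is nuclear, there is no ambiguity in the tensor norm. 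This gives $\lVert (\mathrm{id}\otimes\rho_n)(z)\rVert\to\lVert(\mathrm{id}\otimes\rho_{\mathrm{reg.}})(z)\rVert$ along every ultrafilter.

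To pass to ``in probability'', argue by contradiction with a subsequence on which the event fails with probability bounded below. A diagonal argument over a countable dense set of $x\in\mathbb{Q}[i][\Gamma]$ then produces deterministic realizations that converge strongly, and the ultraproduct argument applies to them. The delicate points are ensuring the scalar convergence holds simultaneously for the countable family of $x$, and the approximation of general coefficients $a_\gamma$ by rational ones, which is routine since norms are Lipschitz in the coefficients.
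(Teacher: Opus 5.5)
Your proof is correct, but it does not follow the route the text points to. The paper gives no argument of its own; it attributes the deduction to Hide--Magee, who use work of Pisier, and refers to the surveys. Pisier's mechanism is linearization. Bordenave and Collins actually prove norm convergence for degree-one expressions $a_0\otimes 1+\sum_i a_i\otimes\rho_n(x_i^{\pm 1})$ with matrix coefficients. Linearization then transfers norm control from such matrix pencils to arbitrary elements of $\mathrm{Mat}_N(\mathbb{C})\otimes\mathbb{C}[\Gamma]$. So it raises the degree while keeping matrix coefficients, and it starts from an input stronger than the scalar Theorem~\ref{thm_BordenaveCollins} as stated in the text.

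You instead start from the scalar statement exactly as formulated and get the matrix coefficients for free from $C^*$-algebraic rigidity. The isometric $*$-homomorphism $C^*_{\mathrm{red}}(\Gamma)\to\prod_\omega\mathrm{Mat}_{n-1}(\mathbb{C})$ stays injective, hence isometric, after amplification by $\mathrm{Mat}_N(\mathbb{C})$. This uses the identification $\mathrm{Mat}_N\bigl(\prod_\omega\mathrm{Mat}_{n-1}(\mathbb{C})\bigr)\simeq\prod_\omega\mathrm{Mat}_N\bigl(\mathrm{Mat}_{n-1}(\mathbb{C})\bigr)$, which you should state. Your argument is shorter and self-contained given the stated theorem. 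The price is that it is purely qualitative (ultrafilters, subsequence extraction) and gives no rate, whereas linearization works with explicit norm inequalities.

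Two small remarks. First, since $\omega$ is an arbitrary non-principal ultrafilter, Step~3 already yields the full limit, not just the upper bound. Steps~1--2 are therefore correct but superfluous.

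Second, the passage to convergence in probability is cleanest as follows. Along the subsequence where the bad event has probability at least $\delta$, choose increasing indices $n_k$ and a realization $\varphi_{n_k}$ in the intersection of the bad event with the good events for the first $k$ elements of a countable $\ell^1$-dense subset of $\mathbb{C}[\Gamma]$, at tolerance $1/k$. This intersection is nonempty once $n_k$ is large. Norms are $1$-Lipschitz in the $\ell^1$-norm of the coefficients, uniformly in $n$. Hence this deterministic sequence converges strongly on all of $\mathbb{C}[\Gamma]$ while failing for your element $z$, which contradicts the deterministic statement. This avoids any coupling of the $\varphi_n$ on a common probability space.
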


Hide and Magee use work by \textcite{Pisier} to derive this from Bordenave and Collins's result. For more on this, see the surveys by \textcite[Proposition 3.3]{Magee_survey} and \textcite[Section 2.4]{vanHandel_survey}.

\subsubsection{Applying strong convergence}

All that is left now is connecting the dots. For $f\in L^2(\mathbb{H}^2,\mathbb{C}_0^n)$ and $s\in [s_0,1]$, we have
\begin{align*}
\Big(E_{X_n}^{\mathrm{int}}(s)\cdot f\Big)(x) & = \int_{\mathbb{H}^2} k^{(T)}_{\mathbb{H}^2,n}(s;x,y) \cdot \chi^{\mathrm{int}}(y)\cdot f(y)\;d\mu(y) \\
 & = \sum_{\gamma \in \Gamma} \int_{\mathcal{F}} k^{(T)}_{\mathbb{H}^2,n}(s;\gamma\cdot x,y) \cdot \rho_n(\gamma^{-1}) \cdot \chi^{\mathrm{int}}(y)\cdot f(y) \; d\mu(y).
\end{align*}

Now the observation is that, if $\mathcal{F}$ is a fundamental domain for the action of $\Gamma$ on $\mathbb{H}^2$, then $L^2(\mathbb{H}^2,\mathbb{C}_0^n) \simeq L^2(\mathcal{F})\otimes \mathbb{C}_0^n$ as Hilbert spaces, through the map
\[
f \in L^2(\mathbb{H}^2,\mathbb{C}_0^n) \quad \mapsto \quad \sum_{i=1}^{n-1}\langle f\rvert_{\mathcal{F}},e_i\rangle \otimes e_i,
\]
where $(e_1,\ldots,e_{n-1})$ is some orthonormal basis of $\mathbb{C}_0^n$. 

This means we can think of $E_{X_n}^{\mathrm{int}}(s)$ as acting on $L^2(\mathcal{F})\otimes \mathbb{C}_0^n$ as
\[
E_{X_n}^{\mathrm{int}}(s) \simeq \sum_{\gamma\in\Gamma} a_\gamma^{(T)}(s) \otimes \rho_n(\gamma^{-1}), 
\]
where $a^{(T)}_\gamma(s):L^2(\mathcal{F}) \to L^2(\mathcal{F})$ is given by:
\[
\Big(a^{(T)}_\gamma(s) \cdot f\Big)(x) = \int_{\mathcal{F}} k_{\mathbb{H}^2}(s;\gamma\cdot x,y)\cdot \chi^{\mathrm{int}}(y)\cdot f(y)\;d\mu(y).
\]
This is already much closer to something that we can feed into Proposition \ref{prop_strong_conv}. Indeed, the cut-off using $T$ guarantees that there are only finitely many non-zero terms in the sum. 

The operators $a^{(T)}_\gamma(s)$ however aren't quite finite dimensional matrices. Nonetheless, Hide and Magee prove that they are Hilbert--Schmidt operators and hence compact. This means in particular that they can be approximated by finite dimensional matrices and this suffices to prove that, for fixed $s$ and $T$, with high probability as $n\to\infty$,
\[
\left\lVert  \sum_{\gamma\in\Gamma} a_\gamma^{(T)}(s) \otimes \rho_n(\gamma^{-1}) \right\rVert_\infty \quad \leq \quad
\left\lVert  \sum_{\gamma\in\Gamma} a_\gamma^{(T)}(s) \otimes \rho_{\mathrm{reg}}(\gamma^{-1}) \right\rVert_\infty +\frac{1}{5}.
\]
The operator $\sum_{\gamma\in\Gamma} a_\gamma^{(T)}(s) \otimes \rho_{\mathrm{reg}}(\gamma^{-1})$ is conjugated to $K^{(T)}_{\mathbb{H}^2}\cdot \chi^{\mathrm{int}}_n$. \textcite[Lemma 5.2]{HideMagee} prove that there exists a $T_0=T_0(s_0)$ such that for all $T\geq T_0$ and all $s\geq s_0$,
\[
\left\lVert K^{(T)}_{\mathbb{H}^2}(s)\cdot \chi^{\mathrm{int}}_n \right\rVert_\infty \leq \frac{1}{5}.
\]
So we obtain that with high probability \[
\lVert E_{X_n}^{\mathrm{int}}(s)\rVert_\infty \leq \frac{2}{5},
\]
for fixed $s\geq s_0$. To finish the proof, Hide and Magee show that the coefficients $a^{(T)}_\gamma(s)$ vary slowly enough as functions of $s$ to upgrade the result for fixed $s$ to one for all $s\in [0,s_0(1-s_0)]$ simultaneously, with a slight loss and thus produce the bound
\begin{equation}\label{eq_int_error_norm}
\lVert E_{X_n}^{\mathrm{int}}(s)\rVert_\infty \leq \frac{3}{5},
\end{equation}
for all $s\in(s_0,1]$ simultaneously, with high probability.

\subsection{Finishing the proof}

We now simply observe that the first part of Proposition \ref{prp_cusp_parametrix} and equation \eqref{eq_parametrix} imply that, 
\[
\Big(\Delta-s(1-s)\Big) \circ \Big(M_{X_n}^\mathrm{int}(s)+M_{X_n}^{\mathrm{cusp}}(s)\Big) = \mathrm{Id} + E_{X_n}^\mathrm{int}(s)+E_{X_n}^{\mathrm{cusp}}(s)
\]
and, by the second half of Proppsition \ref{prp_cusp_parametrix} and equation \eqref{eq_int_error_norm}, with high probability as $n\to\infty$,
\[
\lVert E_{X_n}^{\mathrm{int}}(s)+E_{X_n}^{\mathrm{cusp}}(s)\rVert_\infty \leq \frac{4}{5}
\]
This suffices to invert $\mathrm{Id} + E_{X_n}^\mathrm{int}(s)+E_{X_n}^{\mathrm{cusp}}(s)$ and hence to build our resolvent, as promised in Section \ref{sec_HM_strategy}.

\section{Weil--Petersson random surfaces (Anantharaman--Monk)}\label{sec_AM}

The next result we will treat is the theorem of \textcite{AnantharamanMonk4}. Recall from Section \ref{sec_WP} that $X_g$ denotes a random closed and orientable hyperbolic surface of genus $g$ distributed according to the Weil--Petersson measure on the moduli space $\mathcal{M}_g$.

\begin{theo}[Anantharaman--Monk]\label{thm_AM}
For every $\varepsilon>0$,
\[
\lim_{g\to\infty}\mathbb{P}\Big(\lambda_1(X_g) > \frac{1}{4}-\varepsilon\Big) = 1. 
\]
\end{theo}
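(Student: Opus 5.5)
The plan is to adapt Friedman's trace method (Section \ref{sec_Friedman}) to Weil--Petersson random surfaces, with the Selberg trace formula playing the role of the closed-walk expansion and Mirzakhani's integration formula (Theorem \ref{thm_Mirzakhani_integration_formula}) playing the role of Lemma \ref{lem_trace_expansion}. Fix $\varepsilon>0$. Every small eigenvalue $\lambda=\frac14-r^2$ with $r\in(\sqrt{\varepsilon},\frac12)$ corresponds to an imaginary spectral parameter $ir$. We take an even test function $f_L(x)=f(x/L)$ with $f\in C_c^\infty(-1,1)$, and arrange $\widehat f\ge 0$ on $\mathbb{R}\cup i\mathbb{R}$ by choosing $f=h\ast h$. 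Then $\widehat{f_L}(ir)\approx e^{rL}$ grows exponentially. The length scale will be $L=A\log g$ for a large constant $A$.

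By the Selberg trace formula and positivity, $\mathbb{P}(\lambda_1(X_g)\le \frac14-\varepsilon)\lesssim e^{-\sqrt\varepsilon L}\,\mathbb{E}\big(\sum_{\lambda\ne 0}\widehat{f_L}(\sqrt{\lambda-1/4})\big)$. It therefore suffices to show that the expectation of the geometric side, minus the contribution of $\lambda_0=0$ (which is of size $e^{L/2}$), is $O(g\,L^{C}e^{L/2\cdot\delta})$ for some $\delta$ that can be made small, so that taking $A$ large makes the bound $o(1)$.

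Expanding $\mathbb{E}$ of the geodesic sum, we sort closed geodesics $\gamma$ of length $\le L$ by the topological type of the filled subsurface $S(\gamma)$, meaning a regular neighbourhood of $\gamma$ completed by the disks and cylinders it bounds. For each type $(g_S,n_S)$ with fixed Euler characteristic, we use Mirzakhani's formula to write the average number of such geodesics as an integral over boundary lengths. This integral combines the Weil--Petersson volume of the complement $\mathcal{M}_{g-g_S,\,\cdot}$, Mirzakhani--Zograf volume asymptotics, and a count of geodesics filling a surface of type $S$ with given boundary lengths. The key intermediate result I would aim for is an asymptotic expansion
\[
\mathbb{E}\Big(\sum_{\ell(\gamma)\le L} F(\ell(\gamma))\Big)=\sum_{k=0}^{K-1}\frac{1}{g^k}\int F(\ell)\,V_k(\ell)\,d\ell + O\Big(\frac{L^{C}e^{L}}{g^K}\Big),
\]
where each density $V_k(\ell)$ is \emph{Friedman--Ramanujan}: it equals $p_k(\ell)e^{\ell}$ for a polynomial $p_k$, plus a remainder $O(\ell^{C}e^{\ell/2})$. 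The structural claim is that the principal terms $p_k(\ell)e^\ell$ are annihilated by a suitable differential operator, for instance $(\partial_\ell-1)^{m}$. We replace $f$ by $\mathcal{D}^m f$ for such an operator $\mathcal D$, which only modifies $\widehat f$ by a polynomial factor that is harmless on $i(\sqrt\varepsilon,\frac12)$. After integrating by parts, the principal terms then disappear and only $e^{\ell/2}$-type remainders survive. Combining this with $K\approx L/\log g$ terms gives the required bound.

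Two steps are expected to be hard, and the second is the main obstacle, exactly as in Friedman's proof. The first is proving the Friedman--Ramanujan property for every coefficient $V_k$. For this I would compute, for each filling type, the volume functions with the boundary-length dependence made explicit via Mirzakhani's recursion, and show that length-counting functions of filling geodesics are Friedman--Ramanujan by induction on $|\chi(S)|$. The second is the \emph{tangles}: surfaces containing a short separating multicurve or a small subsurface with negative Euler characteristic and short boundary. These occur with probability $\to 0$, but that probability is only polynomially small in $g$, and their contribution blows up the expectation at scale $L=A\log g$. I would handle this by conditioning on the tangle-free event, where no embedded subsurface with $|\chi|\le \kappa$ has boundary length $\le \eta\log g$. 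This requires first showing that the event has probability $\to1$, and then re-expanding the conditioned expectation. That in turn requires inclusion–exclusion over possible tangles, so that the indicator function is again expressible via Mirzakhani's formula and the Friedman--Ramanujan structure is preserved. Finally, we choose $\kappa,\eta,A$ in the right order depending on $\varepsilon$, and the statement follows.
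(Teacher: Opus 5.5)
Your architecture matches the one Anantharaman--Monk use: the Selberg trace formula with a non-negative test function at scale $L=A\log g$, an expansion in powers of $1/g$ with Friedman--Ramanujan coefficients, a differential operator killing principal terms, and a restriction to tangle-free surfaces. But the key intermediate result you aim for is false. The coefficient densities of the \emph{full} geodesic sum are not Friedman--Ramanujan: as the text recalls, Anantharaman--Monk prove that already $\ell\mapsto \ell f_1^{\mathbf{all}}(\ell)$ is not. At order $1/g$ only simple curves and types with $\chi=-1$ contribute. So grouping by the type $(g_S,n_S)$ of the filled surface does not help: the failure sits among geodesics filling pairs of pants and one-holed tori, and your induction on $\lvert\chi(S)\rvert$ breaks already at $\lvert\chi(S)\rvert=1$. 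Your plan is also internally inconsistent here. If every $V_k$ were Friedman--Ramanujan with the error you state, the $\mathcal{D}^m$ trick would already bound the unconditioned expectation, and tangles could not blow anything up.

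What is true is the Friedman--Ramanujan property \emph{type by type}. It holds for each local topological type $\mathbf{T}$, meaning the filled surface \emph{together with} the embedding of $\gamma$ in it, with constants depending on $\mathbf{T}$. The obstruction to summing is that geodesics of length $\le L$ realize exponentially many local types. You are missing the ingredient that makes tangle-freeness useful, namely the generalized collar lemma. On a $(\kappa,\kappa\log g)$-tangle-free surface, geodesics of length $\le A\log g$ with $\chi(\mathbf{T})\ge\chi_{\min}$ realize only $O((\log g)^{\beta})$ local types, so the per-type bounds can be summed at polylogarithmic cost. Without this counting step, restricting to tangle-free surfaces gives no reason for the Friedman--Ramanujan structure to be ``preserved''. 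Moreover, naive inclusion--exclusion over tangles is hard to control because tangles overlap in complicated ways; Anantharaman--Monk use a different, less explicit M\"obius function.

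Two smaller corrections. First, the operator cannot be $(\partial_\ell-1)^m$. The test function must stay even with non-negative transform, and the weight $\ell/(2\sinh(\ell/2))$ on the geometric side turns a principal term $p(\ell)e^{\ell}$ into roughly $\ell p(\ell)e^{\ell/2}$. The right choice is $\mathcal{D}=\frac14-\frac{d^2}{dx^2}$, which annihilates $q(\ell)e^{\ell/2}$ for $\deg q<m$. Its symbol $(\frac14+\xi^2)^m$ vanishes at $\xi=\frac{i}{2}$, so the trivial eigenvalue is removed automatically rather than subtracted. Second, that symbol equals $\lambda^m$ at $\xi=\sqrt{\lambda-\frac14}$, so it is \emph{not} harmless near $\lambda=0$. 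You need an a priori bound, such as Mirzakhani's $\lambda_1(X_g)\ge 0.0024$ with high probability, to exclude eigenvalues near $0$.
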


The series of papers proving this theorem have a total of 312 pages on the arXiv and develop many new ideas. We have fewer pages at our disposal here, so we will only sketch some of the ingredients of Anantharaman and Monk's proof in this section. \textcite{AnantharamanMonk_overview} wrote a longer overview themselves, to which we refer the reader who would like more, but not all, details.

\subsection{The trace method}\label{sec_wp_trace_method}

The goal is to prove the theorem with an analogue in the setting of surfaces of the trace method that Friedman used (see Section \ref{sec_Friedman}). The trace of a function of the adjacency matrix is replaced by the trace of a function of the Laplacian on $X_g$, which can be accessed through the Selberg trace formula (see Section \ref{sec_STF}). We once and for all fix an even function $h\in C_c^\infty(\mathbb{R})$ with $\mathrm{supp}(h) = [-1,1]$ such that it Fourier transform $\widehat{h}$ is non-negative on $\mathbb{R}\cup i\cdot \mathbb{R}$ (because the transformation $\lambda\mapsto \sqrt{\lambda-\frac{1}{4}}$ that appears in the trace formula maps the spectrum of $\mathbb{R}\cup i[-\frac{1}{2},\frac{1}{2}]$) and define, for $R>0$, the function $h_R\in C_c^\infty(\mathbb{R})$ with support $[-R,R]$ by
\[
h_R(x) = h\left(\frac{x}{R}\right), \quad x\in\mathbb{R}.
\]
Its Fourier transform satisfies $\widehat{h_R}(\xi) = R\cdot \widehat{h}(R\cdot \xi)$, so non-negativity is preserved, and \textcite[Lemma 3.11]{AnantharamanMonk2} prove that for all $\alpha,\varepsilon>0$, there exists a constant $C_{\alpha,\varepsilon}>0$ such that for any closed hyperbolic surface $X$ and any $R\geq 1$ we have
\begin{equation}\label{eq_paleywiener}
\text{if} \quad \lambda_1(X) \leq \frac{1}{4}-\alpha^2-\varepsilon \quad \text{then} \quad \widehat{h}_R\left(\sqrt{\lambda_1(X)-\frac{1}{4}}\right) \geq C_{\alpha,\varepsilon}\cdot e^{(\alpha+\varepsilon)\cdot R}.
\end{equation}
and thus
\[
\mathbb{P}\left( \lambda_1(X_g) \leq \frac{1}{4}-\alpha^2-\varepsilon \right) \leq \mathbb{P}\left(\widehat{h}_R\left(\sqrt{\lambda_1(X_g)-\frac{1}{4}}\right) \geq C_{\alpha,\varepsilon}\cdot e^{(\alpha+\varepsilon)\cdot R} \right).
\]
Because expectations are easier to compute than probabilities, we apply Markov's inequality and obtain
\[
\mathbb{P}\left(\widehat{h}_R\left(\sqrt{\lambda_1(X_g)-\frac{1}{4}}\right) \geq C_{\alpha,\varepsilon}\cdot e^{(\alpha+\varepsilon)\cdot R} \right)  \leq \frac{\mathbb{E}\left(\widehat{h}_R\left(\sqrt{\lambda_1(X_g)-\frac{1}{4}}\right) \right)}{C_{\alpha,\varepsilon}\cdot e^{(\alpha+\varepsilon)\cdot R}} 
\]
The quantity on the right can be bounded using the Selberg trace formula. Indeed, using the fact that $\widehat{h_R}$ is non-negative on $\mathbb{R}\cup i\cdot\mathbb{R}$, all the contributions to the spectral side of the Selberg trace formula are non-negative and thus
\begin{multline*}
\mathbb{E}\left(\widehat{h}_R\left(\sqrt{\lambda_1(X_g)-\frac{1}{4}}\right) \right)
 \leq  (g-1)\cdot\int_{-\infty}^\infty  y\; \widehat{h_R}(y) \; \tanh(\pi y)\;dy 
\\  + \mathbb{E}\left(\sum_{\substack{\gamma \text{ primitive closed} \\ \text{geodesic on }X_g\\ \text{with }\ell(\gamma) \leq R}} 
\ell(\gamma) \sum_{n = 1}^{\lfloor R/\ell(\gamma) \rfloor} \frac{1}{2\sinh(n\ell(\gamma)/2)} h(n\ell(\gamma)) \right).
\end{multline*}
So if we prove that the right hand side is significantly less than $e^{(\alpha+\varepsilon)\cdot R}$, we prove that with high probability, $\lambda_1(X_g) > \frac{1}{4} - \alpha^2 -\varepsilon$.

There are however some immediate problems to overcome:
\begin{enumerate}
\item The most obvious problem is that the trace formula has a contribution coming from the trivial eigenvalue $\lambda_0(X_g)=0$. This contribution is the dominant exponential term on the spectral side and so we can never expect to prove that the right hand side is small enough.
\item A more serious problem is that the right hand side contains the integral term that has the genus in front of it. In particular, if we want this to be less than $e^{(\alpha+\varepsilon)\cdot R}$, $R$ needs to be at least $A_{\alpha,\varepsilon}\cdot \log(g)$, where $A_{\alpha,\varepsilon}\to\infty$ as $\alpha\to 0$. This means that we need to understand geodesics of (logarithmically) growing length on $X_g$. The reason that this is difficult is that geodesics of bounded length on random hyperbolic surfaces of large tend to be simple, however at logarithmic scales, they start having self-intersections and Mirzakhani's integration formula (Theorem \ref{thm_Mirzakhani_integration_formula}) does not hold for mapping class group orbits of such curves.
\item Another major (related) problem that is not yet evident from what we have derived so far is the hyperbolic analogue of the problem of tangles that appeared in Friedman's proof. Our random surface $X_g$ can have a problematic subsurface with a probability that tends to $0$ as $g\to\infty$. Just like in Friedman's proof however, these probabilities do not tend to $0$ fast enough and will ruin the expectations we're computing.
\end{enumerate}

In what follows we will sketch how Anantharaman and Monk resolve these issues, thus proving a near optimal spectral gap. 

\subsection{Removing exponential terms}\label{sec_removing_exp_terms}

First we discuss how to deal with the first problem. The first solution can be found in the articles \textcite{WuXue1,LipnowskiWright} who also used the trace method. They observed that the term corresponding to $\lambda_0(X_g)=0$ on the spectral side cancels against the contribution from simple curves on the expected geometric side and showed the remainder is small enough to prove a spectral gap of $\frac{3}{16}-\varepsilon$. The hard part of these papers is controlling the contribution from non-simple geodesics. We will not explain how this works, but will just very briefly describe how to compute the contribution coming from simple closed geodesics. We will use Anantharaman and Monk's notation:
\[
\langle F\rangle_g^{\mathbf{s}} = \mathbb{E}\left( \sum_{\substack{\gamma \text{ a simple closed} \\ \text{on }X_g}} F(\ell(\gamma)) \right).
\]
The simple closed curves on an orientable closed surface of genus $g$ fall in to $\lfloor\frac{g}{2}\rfloor+1$ mapping class group orbits. As such, it follows from Mirzakhani's integration formula (Theorem \ref{thm_Mirzakhani_integration_formula}) that
\begin{equation}\label{eq_simple_density}
\langle F\rangle_g^{\mathrm{s}} = \frac{1}{V_g}\int_0^\infty F(\ell) V_g^{\mathrm{s}}(\ell) d\ell,
\end{equation}
where $V_g^{\mathrm{s}}(\ell)$ is a function that can be expressed in terms of volumes of moduli spaces of hyperbolic surfaces with boundary. Moreover, Mirzakhani proved that 
\begin{equation}\label{eq_sinh_estimate}
\frac{V_g^{\mathbf{s}}}{V_g} = \frac{4}{\ell}\sinh^2\left(\frac{\ell}{2}\right) + O\left(\frac{(1+\ell)^c e^\ell}{g}\right),
\end{equation}
for some constant $c>0$ (this estimate has been improved by multiple authors, notably by \textcite{AnantharamanMonk1}). Putting all of the above together and canceling out the contribution from the trivial eigenvalue with the main term in $\left\langle \frac{\ell\cdot h_R(\ell)}{2\sinh(\ell/2)} \right\rangle_g^{\mathbf{s}}$ that cancel \emph{exactly}, yields
\[
\mathbb{E}\left( \widehat{h_R}\left(\sqrt{\lambda_1(X_g)-\frac{1}{4}}\right)\right) = O\left( R^2\cdot g + \frac{R^ce^{R/2}}{g}\right) + E^{\mathbf{ns}}(R,g),
\]
where $E^{\mathbf{ns}}(R,g)$ is the contribution coming from non-simple geodesics. Bounding the latter and comparing this to the exponential growth coming from the trivial eigenvalue, using \eqref{eq_paleywiener}, one finds that it's best to set $R=4\log(g)$ and this yields the lower bound $\lambda_1(X_g)\geq \frac{3}{16}-\varepsilon$.

In order to systematize these cancellations, Anantharaman and Monk replace the Fourier transform pair $(h_R,\widehat{h_R})$ by
\[
\mathcal{D}^mh_R(x)=\left(\frac{1}{4}-\frac{d^2}{dx^2}\right)^m h_R(x), \quad \left(\frac{1}{4}+\xi^2\right)^m\cdot \widehat{h_R}(\xi)
\]
for some $m\geq 1$. On the spectral side, this has the effect of removing the contribution coming from $\sqrt{\lambda_0(X_g)-\frac{1}{4}}=\frac{i}{2}$. It does mean that we can no longer exclude eigenvalues very close to $0$, but these are already covered by Mirzakhani's result (see Section \ref{sec_WP}). Filling the new function in in \eqref{eq_simple_density}, again applying \eqref{eq_sinh_estimate} and integrating by parts yields that now
\[
\left\langle \frac{\ell\cdot \mathcal{D}h_R(\ell)}{2\sinh(\ell/2)} \right\rangle_g^{\mathbf{s}}=-h_R(0)+O\left( \frac{R^ce^{R/2}}{g} \right).
\]
So the exponential term in $R$, that canceled against the contribution from the trivial eigenvalue before, now gets replaced by the bounded term $h_R(0)$. This relies crucially on the fact that $\mathcal{D}$ annihilates the function $\ell\mapsto \sinh\left(\frac{\ell}{2}\right)$.

The idea is now that if we can produce this same phenomenon for other curve types, we could produce further cancellations and obtain finer information on $\lambda_1(X_g)$.

\subsection{Friedman--Ramanujan functions}

This leads Anantharaman and Monk to the definition of what they call Friedman--Ramanujan functions. These functions are a hyperbolic analogue to the functions that Friedman calls Ramanujan functions (see Section \ref{sec_Friedman}). Before we get there, we need to talk about curves.

\begin{figure}[ht]
\begin{center}
\includegraphics{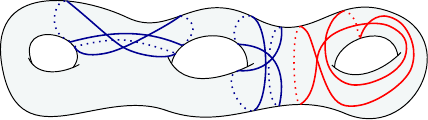}
\caption{Three curves with three self-intersections each, on a surface of genus three. The leftmost two have the same local topological type, even if they sit in the surface differently. The rightmost curve fills a one-holed torus instead of a pair of pants and thus has a different local topological type.}\label{pic_loc_type}
\end{center}
\end{figure}

To group geodesics together, Anantharaman and Monk define the \textbf{local topological type} of a geodesic. This is the data of the subsurface the geodesic fills (so a regular neighborhood of the geodesic, completed by adding in any topological disks that this regular neighborhood bounds) and the embedding of the geodesic into the surface, all considered up to homomorphism (see Figure \ref{pic_loc_type}). In particular, this data does not take into account how the subsurface that the curve fills sits in the larger surface.

For any local topological type $\mathbf{T}$, we can now define
\[
\langle F\rangle^{\mathbf{T}}_g = \mathbb{E}\left( \sum_{\substack{\gamma \text{ a closed geodesic} \\ \text{of type }\mathbf{T} \text{ on }X_g}}F(\ell(\gamma))\right).
\]
We will also write $\chi(\mathbf{T})$ for the Euler characteristic of the subsurface corresponding to $\mathbf{T}$.

In order to apply a similar strategy, we first need an analogue of \eqref{eq_simple_density} that was based on the Mirzakhani integration formula and provided us with a density for $\langle F\rangle^{\mathrm{s}}_g$. \textcite[Theorem 5.15]{AnantharamanMonk2} prove that for any topological type $\mathbf{T}$ there exists a unique sequence of continuous functions $(f^{\mathbf{T}}_k)_{k\geq \lvert\chi(\mathbf{T})\rvert}$ such that
\begin{equation}\label{eq_density_general_types}
\langle F\rangle^{\mathbf{T}}_g = \sum_{k=\lvert \chi(\mathbf{T})\rvert}^K \frac{1}{g^k} \int_0^\infty F(\ell)\cdot f^{\mathbf{T}}_k(\ell)\; d\ell + O_{\mathbf{T},K,\eta}\left( \frac{\left\lVert F(\ell) e^{(1+\eta)\ell}\right\rVert_\infty}{g^{K+1}}\right)
\end{equation}
for any $\eta>0$, any $K\geq 0$ and any $g$ large enough.

In order to apply an analogous strategy to that of integration by parts and the fact that $\mathcal{D}$ annihilates the function in Mirzakhani's $\sinh$ estimate \eqref{eq_sinh_estimate}, we need to know that $\mathcal{D}^m$ ``almost'' annihilates the coefficient functions $f_k^{\mathbf{T}}$. This is what Friedman--Ramanujan functions are for.

\begin{defi}
Let $c\geq 0$. We define
\[
\mathcal{R}^c_w = \left\{f\in C^0(\mathbb{R}_{\geq 0},\mathbb{C});\; \exists c_1>0\text{ such that }\forall n\geq 1:\; \int_0^n\lvert f(s)\rvert ds \leq c_1 (n+1)^c e^{n/2}\right\}.
\]
We also set $\mathcal{R}_w = \cup_{c\geq 0}\mathcal{R}^c_w$. A function $f\in C^0(\mathbb{R}_{\geq 0},\mathbb{C})$ is said to be \textbf{Friedman--Ramanujan} if there exists a polynomial $p$ such that
\[
f(\ell) -p(\ell)\cdot e^\ell \in \mathcal{R}_w.
\]
The function $\ell\mapsto p(\ell)\cdot e^\ell $ is called the \textbf{principal term} of $f$ and $f(\ell)-p(\ell)\cdot e^\ell$ is called the remainder term of $f$.
\end{defi}

The main technical result of the final article of \textcite[Theorem 1.15]{AnantharamanMonk4} is: 
\begin{theo}[Anantharaman--Monk]
The functions 
\[
\ell \mapsto \left\{
\begin{array}{ll}
\ell \cdot f_k^{\mathbf{T}}(\ell) & \text{if } \mathbf{T}=\mathbf{s} \\
f_k^{\mathbf{T}}(\ell) & \text{otherwise},
\end{array}
\right.
\]
in \eqref{eq_density_general_types} are Friedman--Ramanujan.
\end{theo}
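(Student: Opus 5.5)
The plan is to reduce the statement to an explicit description of the density functions $f_k^{\mathbf{T}}$, and then show that each piece of that description is Friedman--Ramanujan by a stability argument.

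\textbf{Step 1: an explicit formula for the densities.} Fix a local topological type $\mathbf{T}$, filling a subsurface $S$ of Euler characteristic $\chi(\mathbf{T})$. Apply Mirzakhani's integration formula (Theorem \ref{thm_Mirzakhani_integration_formula}) to the boundary multicurve of $S$. This writes $\langle F\rangle_g^{\mathbf{T}}$ as an integral over the boundary lengths $x$ of $\partial S$, weighted by
\[
\mathrm{vol}_{\mathrm{WP}}(\text{complement of } S)\cdot x_1\cdots x_j,
\]
against an inner quantity. The inner quantity is the volume, inside $\mathcal{M}(S;x)$, of those surfaces on which the geodesic of type $\mathbf{T}$ has length $\ell$. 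Expanding the complement volume in powers of $1/g$ then gives the coefficients $f_k^{\mathbf{T}}$. This uses the Mirzakhani--Zograf asymptotics, whose coefficients are polynomials in $x$ times products of $\sinh(x_i/2)/x_i$.

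So each $f_k^{\mathbf{T}}(\ell)$ is a finite sum of integrals of the form
\[
\int V(x)\,\prod_i \sinh(x_i/2)\;\delta\big(\ell_\gamma(Y)=\ell\big)\, dY\, dx ,
\]
where $V$ is polynomial in $x$.

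\textbf{Step 2: coordinates adapted to the curve, and a stability class.} Parametrize $\mathcal{T}(S)$ by coordinates adapted to $\gamma$. Cut $\gamma$ at its self-intersections into geodesic arcs, and use generalized Fenchel--Nielsen or orthogeodesic coordinates. In these coordinates $\ell_\gamma$ is a sum of arc lengths plus explicit trigonometric corrections depending on the angles.

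Then prove that Friedman--Ramanujan functions form a class stable under the operations that appear:
\begin{itemize}
\item convolution, in the form $f*g$ with $e^{\ell}p(\ell)$ principal terms, where the principal terms combine and the $\mathcal{R}_w$ remainders stay in $\mathcal{R}_w$;
\item multiplication by polynomials;
\item integration in auxiliary parameters with $\sinh$ weights.
\end{itemize}
The density of $\ell_\gamma$ should then decompose as iterated convolutions of elementary densities, such as $\sinh$-type kernels coming from each boundary or arc. The case $\mathbf{T}=\mathbf{s}$ is done separately: there $\ell f_k^{\mathbf{s}}$ is computed directly from the volume polynomials, which are $\sinh$/$\cosh$ times polynomials, so it is Friedman--Ramanujan by inspection.

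\textbf{Step 3: the main obstacle.} The hard point is that the integration domain in Step 1 is not a product. The mapping class group of $S$ must be quotiented out, and $\gamma$ should be filling but not counted with multiplicity. This produces indicator functions and non-convolution dependencies, for instance from the condition that the boundary components be embedded and not too short relative to the arcs. Such cutoffs can destroy the exact cancellation of the $e^{\ell}$ term.

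I would first try an inclusion--exclusion or Möbius inversion over subsurfaces. This rewrites the constrained volume as an alternating sum of unconstrained ones, each of which is a genuine convolution. The remaining task is then to verify that the error terms lose at least the factor $e^{\ell/2}$, which is exactly the $\mathcal{R}_w$ condition. Proving this bound uniformly, including near degenerate angles where the trigonometric corrections blow up, is where I expect most of the work to lie.
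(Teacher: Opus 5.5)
Your outline (unfolding along $\partial S$ with Mirzakhani's formula, expanding the complement volumes in $1/g$, then replacing Fenchel--Nielsen coordinates by coordinates adapted to $\gamma$) matches the architecture the paper attributes to Anantharaman--Monk. However, the step that carries the theorem is asserted rather than proved.

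The sentence ``the density of $\ell_\gamma$ should then decompose as iterated convolutions of elementary densities'' \emph{is} the content of the statement, and it does not hold as stated. In coordinates where $\ell_\gamma$ is (close to) a sum of arc lengths, neither the weight $V(x)\prod_i\sinh(x_i/2)$ nor the Weil--Petersson density is a product:
\begin{itemize}
\item each boundary length $x_i$ of $S$ is a non-linear function of the lengths of the arcs of $\gamma$ it runs alongside and of the turning angles;
\item the same arc enters the lengths on both of its sides;
\item when $S\setminus\gamma$ has disc components, the image of the coordinates is cut out by the closing conditions of those polygons.
\end{itemize}
In Fenchel--Nielsen coordinates, where the measure is a product, $\ell_\gamma$ is instead a non-linear function of lengths and twists. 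Stability of the Friedman--Ramanujan class under convolution and under multiplication by polynomials is true. But it is only usable once the integrand has been split into Friedman--Ramanujan factors.

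What must be proved is a sharp dichotomy: a principal part $p(\ell)e^{\ell}$ with $p$ polynomial, plus a remainder whose integral up to $n$ is $O((n+1)^c e^{n/2})$, with nothing at any intermediate rate $e^{a\ell}$, $\frac12<a<1$. Your proposal does not explain why the exponent is at most $1$. (One reason is $\sum_i x_i\le 2\ell_\gamma$: each boundary geodesic of $S$ is no longer than the parallel piecewise geodesic loop along $\gamma$, and these loops use each side of each arc at most once.) Nor does it explain why every correction costs at least a factor $e^{-\ell/2}$; this includes the lower exponentials in $\sinh$, the angle terms, short arcs and degenerate angles. Calling this ``where most of the work lies'' is accurate, but it leaves the theorem unproved.

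Step 3 also misidentifies the obstacle. Because $\gamma$ fills $S$, its stabilizer in $\mathrm{MCG}(S)$ acts on $\mathcal{T}(S;x)$ through a finite group. (Boundary twists act trivially there and are accounted for by the twist integration producing $x_1\cdots x_j$.) Unfolding the $\mathrm{MCG}(S)$-orbit of $\gamma$ therefore gives an integral over all of $\mathcal{T}(S;x)$, up to a finite factor. There is no fundamental domain to cut out, and ``filling'' is a topological property of the fixed type, not a metric condition generating indicator functions.

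The genuinely non-product features are the non-linearities described above. There are also terms of the $1/g$-expansion in which the complement of $S$ has components of bounded genus; their volumes are polynomials and carry no $\sinh$ factor, so your description of the coefficients in Step 1 is incomplete. An ``inclusion--exclusion over subsurfaces'' is too unspecified to say what the ``unconstrained volumes'' would be, and there is no reason for them to be convolutions.

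In the paper's account, M\"obius inversion enters for a different reason. It removes tangles so that one can sum over the exponentially many local types (recall that $\ell f_1^{\mathbf{all}}$ is not Friedman--Ramanujan). It is not used to prove the property for an individual $f_k^{\mathbf{T}}$.
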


The proof of this theorem forms a large portion of the total and develops many new ideas, like for instance new coordinates adapted to the local topological type $\mathbf{T}$, that replace the Fenchel--Nielsen coordinates. We will not go into this here and refer to the paper by Anantharaman and Monk for details.

Using the fact that $\mathcal{D}^m$ annihilates functions of the form $\ell\mapsto p(\ell)\cdot e^{\ell/2}$ with $p$ a polynomial of degree $\leq m-1$ allows for an argument using integration by parts again and yields that
\[
\left\langle \ell e^{-\ell/2}\mathcal{D}^m f_R(\ell) \right\rangle^{\mathbf{T}}_g = O_{m,\mathbf{T},K,\eta}\left(R^{c(K,\mathbf{T})} + \frac{e^{(\frac{1}{2}+\eta)\cdot R}}{g^{K+1}}\right).
\]
So the contribution of $\mathbf{T}$ is only polynomial as a function of $R$, instead of exponential. Balancing the error term above with the topological term in the trace formula, that is linear in $g$, leads one to choose $R=2(K+2)\log(g)$. This yields
\[
\left\langle \ell e^{-\ell/2}\mathcal{D}^m f_R(\ell) \right\rangle^{\mathbf{T}}_g = o\left(e^{(\alpha+\varepsilon)R}\right)
\]
with $\alpha = \frac{1}{2(K+2)}$, which is still the same $\alpha$ that appears in the lower bound on the spectral gap. In other words, the larger we can make $K$, the better the bound on the spectral gap we get.

\subsection{Möbius inversion and the problem of tangles}

There is one major problem left: we still need to sum over all possible local topological types of geodesics of length at most $R=2(K+2)\log(g)$. This is a serious problem, because, as \textcite[Theorem 9.1]{AnantharamanMonk2} prove, already the function 
\[
\ell\mapsto \ell\cdot f_1^{\mathbf{all}}(\ell) = \ell\cdot \sum_{\mathbf{T}} f_1^{\mathbf{T}}(\ell),
\]
where the sum runs over all (countably many) local topological types $\mathbf{T}$, is \emph{not} a Friedman--Ramanujan function. In short, the problem is that the number of local topological types that we need to sum over grows exponentially as a function of our parameter $R$. So, a priori, the strategy completely falls apart.

As we have already mentioned, the problem is the presence of tangles. Here, for $\kappa,\omega>0$ with $\kappa<\omega$, a \textbf{$(\kappa,\omega)$-tangle} is one of the following:
\begin{itemize}
\item A Riemannian circle $\gamma$ of length $\ell(\gamma)\leq \kappa$, or
\item A pair of pants or a one-holed torus whose longest boundary component has length at most $\omega$.
\end{itemize}
A $(\kappa,\omega)$-tangle in a hyperbolic surface $X$ is a totally geodesically embedded $(\kappa,\omega)$-tangle in $X$, so either a compact subsurface of Euler characteristic $-1$ with totally geodesic boundary or a simple closed geodesic. A surface that has no $(\kappa,\omega)$-tangles in it is called \textbf{$(\kappa,\omega)$-tangle free}.

\textcite[Theorem 4.6, Corollary 4.7 and Corollary 4.8]{AnantharamanMonk3} prove a vast generalization of the collar lemma for tangle-free surfaces: for $0<\kappa<1$, $\omega = \kappa\log(g)$, $A\geq 1$, $R=A\cdot \log(g)$ and $\chi_{\mathrm{min}} \leq -1$, there exists a constant $\beta_{\kappa,A,\chi_{\mathrm{min}}}>0$ such that a $(\kappa,\omega)$-tangle free surface $X\in\mathcal{M}_g$ supports at most
\[
O_{\kappa,A,\chi_{\mathrm{min}}}\left( (\log(g))^{\beta_{\kappa,A,\chi_{\mathrm{min}}}}\right)
\]
local topological types $\mathbf{T}$ of closed geodesics $\gamma$ with $\chi(\mathbf{T}) \geq \chi_{\mathrm{min}}$ and $\ell(\gamma)\leq R$. In other words, under the tangle free hypothesis, we go from exponentially many local topological types we need to deal with to only polynomially many.

So, if we were able to condition on the set of tangle-free surfaces, we would win. The solution is to use Möbius inversion (a version of the inclusion-exclusion principle). The basic idea is that, if $N:\Omega\to\mathbb{N}$ is a counting variable on some probability space $\Omega$, then the indicator function $\mathds{1}_{\{N=0\}}$ of the event that $N=0$ can be written as
\[
\mathds{1}_{\{N=0\}} = 1-\sum_{k=1}^\infty \frac{(-1)^k}{k!} \cdot N(N-1)\cdots (N-k+1).
\]
Moreover, the variable $N(N-1)\cdots (N-k+1):\Omega\to \mathbb{N}$ is a counting variable too, it counts the number of ordered $k$-tuples of whatever $N$ counts.

So, we would want to set $N=T^{\kappa,\omega}:\mathcal{M}_g\to \mathbb{N}$ in the above, where $T^{\kappa,\omega}(X)$ counts the number of $(\kappa,\omega)$-tangles in $X$. This means that $\mathds{1}_{\{T^{\kappa,\omega}=0\}}$ is exactly the indicator of the event that $X$ is $(\kappa,\omega)$-tangle free. However, tangles can intersect in all sorts of complicated ways, which makes controlling the expectation of the right hand side of the inclusion-exclusion formula difficult. So, to solve this issue, \textcite[Theorem 3.1]{AnantharamanMonk3} write down a different, less explicit, Möbius function. We refer to their article for details. 

In short, putting all of the above together produces the necessary cancellations to make the trace method work and this proves Theorem \ref{thm_AM}.

\section{Polynomial error bounds (Hide--Macera--Thomas)}\label{sec_HMT}

The final result we will discuss is the recent theorem by \textcite{HideMaceraThomas}. They prove, for a Weil--Petersson distributed random surface $X_g$:

\begin{theo}\label{thm_HMT}
There exists a constant $c>0$ such that
\[
\lim_{g\to\infty}
\mathbb{P}\left(\lambda_1(X_g) > \frac{1}{4}-\frac{1}{g^c} \right) = 1.
\]
\end{theo}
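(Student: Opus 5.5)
I would prove Theorem \ref{thm_HMT} with the \emph{polynomial method} of \textcite{ChenGarzaVargasTroppvanHandel} rather than the Friedman-type trace method of Section \ref{sec_AM}. The first step is again a trace formula reduction: fix an even test function $h$ supported in $[-1,1]$ with $\widehat{h}\geq 0$ on $\mathbb{R}\cup i\mathbb{R}$, and rescale it to $h_R$ with $R = A\log(g)$ for a large constant $A$. By \eqref{eq_paleywiener}, an eigenvalue $\lambda_1(X_g)\leq \frac14 - g^{-c}$ forces $\widehat{h_R}(\sqrt{\lambda_1-\frac14})\gtrsim e^{R g^{-c/2}}$. So it suffices to bound the expectation of the spectral sum with the trivial eigenvalue removed, and then apply Markov's inequality, as in Section \ref{sec_wp_trace_method}. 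To make that last step succeed we need the error in the expectation to be $O(g^{-\delta})$ relative to the Plancherel main term. That requires $R$ to be a genuinely large multiple of $\log g$.

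The core step is to establish a \emph{$1/g$ expansion with polynomial structure}. I would show that for test functions $F$ of the form $\ell\mapsto \ell\,\mathcal{D}^m h_R(\ell)/(2\sinh(\ell/2))$, the expectation
\[
\mathbb{E}\Big(\sum_{\gamma} F(\ell(\gamma))\Big)
\]
over \emph{all} closed geodesics, tangles included, admits an expansion in powers of $1/g$. I would then prove that the coefficients, viewed through the substitution $\ell\leftrightarrow$ degree, behave like functions of bounded ``degree'' in the sense needed by the polynomial method. Concretely, I would use the density expansion \eqref{eq_density_general_types} together with the Friedman--Ramanujan property of the $f_k^{\mathbf{T}}$ (Theorem 1.15 of \textcite{AnantharamanMonk4}, quoted above). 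From these two inputs I would show that the first-order term $g^{-1}\int F f_1^{\mathbf{all}}$ is controlled by a \emph{smooth} quantity, even though $f_1^{\mathbf{all}}$ itself is not Friedman--Ramanujan. The key point of the Chen--Garza-Vargas--Tropp--van Handel approach is that one only needs a \emph{first-order} expansion, together with crude a priori bounds on all higher orders.

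Those crude bounds are supplied by a Markov-brothers-type inequality. I would view the expectation as a rational or polynomial function of $1/g$ of degree $O(R)$. This is plausible because Weil--Petersson volumes $V_{g,n}$ have such structure in $1/g$ via Mirzakhani's recursion. The inequality then converts a uniform bound on this function into bounds on its derivatives in $1/g$, so the remainder beyond first order costs only $\mathrm{poly}(R)/g^2$. Tangles cause no trouble here: their contribution is absorbed into the higher-order terms, where only crude bounds are needed. No Möbius inversion is required.

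The last step is to check that the first-order term itself has its exponential part annihilated by $\mathcal{D}^m$, giving a contribution $O(\mathrm{poly}(R)e^{R/2}/g)$. This needs a spectral-type interpretation of the first-order coefficient, for instance as a limiting object on $\mathbb{H}^2$ or on infinite-area surfaces, whose spectrum lies in $[\frac14,\infty)$. Combining this with the bounded remainder yields, for $R=A\log g$, an expected excess of size $g^{1-\delta}$ against the main term $e^{R\cdot g^{-c/2}}$. Markov's inequality then gives the theorem for suitable $c>0$.

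I expect the main obstacle to be the polynomial structure in $1/g$. Weil--Petersson expectations are not literally polynomials in $1/g$, unlike random permutation or matrix models. One must therefore find a parametrization, for example via volume polynomials and the intersection-theoretic expressions of $V_{g,n}(L)$, in which the geodesic-length statistics up to length $A\log g$ are given by functions of controlled complexity. One also needs a priori bounds on them strong enough to feed into the derivative inequalities.
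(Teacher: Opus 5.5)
Your first and last steps are where the argument breaks. You keep the trace-method framework of Section \ref{sec_AM}: rescaled $h_R$ with $R=A\log g$, the Paley--Wiener bound \eqref{eq_paleywiener}, and Markov's inequality on the non-negative spectral sum. This framework cannot detect eigenvalues at distance $g^{-c}$ from $\frac14$. If $\lambda_1(X_g)=\frac14-\alpha^2$ with $\alpha=g^{-c/2}$, then $\alpha R\to 0$ and $\widehat{h_R}(i\alpha)=R\,\widehat{h}(iR\alpha)=R\,\widehat h(0)(1+o(1))$. That is exactly what an eigenvalue at $\frac14$ contributes: a test function supported in $[-R,R]$ cannot resolve spectral parameters at scales $\ll 1/R$ (and \eqref{eq_paleywiener} is only stated for fixed $\alpha,\varepsilon$). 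Markov's inequality needs the whole non-negative spectral sum to be small compared with the threshold $\approx R\,\widehat h(0)$, and that sum includes the deterministic Plancherel term $(g-1)\int y\,\widehat{h_R}(y)\tanh(\pi y)\,dy\asymp g/R^2$. Being small relative to the Plancherel term is not the right criterion. Your own final count shows the failure: $g^{1-\delta}$ against $e^{Rg^{-c/2}}=1+o(1)$ gives a probability bound tending to infinity. Resolving $\alpha=g^{-c/2}$ this way would require $R\gtrsim g^{c/2}\log g$, where the error $\lVert F e^{(1+\eta)\ell}\rVert_\infty/g^{K+1}$ in \eqref{eq_density_general_types} gives no information. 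Even at $R=A\log g$, your first-order bound $\mathrm{poly}(R)e^{R/2}/g=g^{A/2-1+o(1)}$ is not small. So no control of the $1/g$ expansion inside this scheme yields Theorem \ref{thm_HMT}; at best it gives a gap $\frac14-O(A^{-2})$, i.e. Theorem \ref{thm_AM}.

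The missing idea is to keep $h$ \emph{fixed} and put the polynomial structure in the spectral variable.
\begin{itemize}
\item One takes polynomials of arbitrary degree in the bounded operator $\widehat h(\sqrt{\Delta_{X_g}-\frac14})$. These correspond to test functions $h^{\ast m}$, so only bounded lengths appear for each fixed degree.
\item For these one proves Theorem \ref{thm_HTM_expansion}, a $1/g$ expansion with remainder $(cq)^{cq}/g^q$. This factorial bound is exactly the substitute for the rational structure you flag as the main obstacle; proving it from the volume and intersection-number recursions is the bulk of the work.
\item Your intuition that tangles only affect higher orders, where crude bounds suffice, is right, but only in this fixed-$h$ regime. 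No geodesics of length $\asymp\log g$, no Friedman--Ramanujan property (Theorem 1.15 of \textcite{AnantharamanMonk4}) and no statement about $f_1^{\mathbf{all}}$ are needed.
\item A Markov-brothers argument then extends the first two coefficients to compactly supported distributions $\nu_0,\nu_1$. It also gives the remainder bound \eqref{eq_expectation_bound}, uniform over smooth $\widetilde f$ at the cost $\lVert w^{(m)}\rVert_{[0,2\pi]}$.
\item The explicit formulas for $a_0^{(m)},a_1^{(m)}$ show that $\nu_0,\nu_1$ vanish on functions supported in $(\widehat h(0),\widehat h(i\sqrt{1/4-\delta}))$. This is the correct form of your ``spectral interpretation of the first-order term''.
\item Take a bump equal to $1$ on the image of $[\delta+\eta,\frac14-\varepsilon]$ with $\lVert w^{(m)}\rVert\lesssim\varepsilon^{-m/2}$. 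Markov's inequality then gives $\mathbb{P}(\delta+\eta\leq\lambda_1(X_g)\leq\frac14-\varepsilon)\leq C\varepsilon^{-m/2}/g$, hence the theorem for any $c<2/m$. Eigenvalues below $\delta$ are excluded by Mirzakhani's bound.
\end{itemize}
The resolution problem disappears because the main terms are cancelled exactly on the spectral window, instead of being dominated via positivity. Meanwhile the remainder tolerates arbitrarily sharp spectral cutoffs at only polynomial cost.
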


This theorem in particular recovers Theorem \ref{thm_AM} by Anantharaman--Monk. The proof starts off in the same way, namely by writing down an asymptotic expansion in powers of $g^{-1}$ for the expected trace of a function of the Laplacian. However, the way this expression is analyzed is fundamentally different. The method is heavily inspired by the extension of the polynomial method by \textcite{MageePudervanHandel}. This does not recover the many intermediate results of Anantharaman and Monk that are of independent interest, but it does make for a (still long, but) shorter proof and improves the estimate on $\lambda_1(X_g)$.

\subsection{The interest in polynomial error bounds}\label{sec_why_poly}

Let us first explain why one might care about polynomial error terms. The ultimate goal is to understand the distribution of $\lambda_1(X_g)-\frac{1}{4}$ to a sufficient degree of precision so as to obtain the result that for all $g$ large enough,
\[
\mathbb{P}\left(\lambda_1(X_g) > \frac{1}{4} \right) >0,
\]
thus proving the existence of surfaces of arbitrarily large genus with a spectral gap that is larger than that of the hyperbolic plane. As we've mentioned above, \textcite{HuangMcKenzieYau} have successfully carried out the graph theoretic analogue of this strategy.

Of course, by Theorem \ref{thm_AM} and \eqref{eq_Huber_bound}, $\lambda_1(X_g)-\frac{1}{4}\to 0$ in probability. So the real question is how fast it tends to $0$ as a function of $g$. Once we know this, we can rescale the random variable and hope to understand the distribution. Looking at Section \ref{sec_BS_convergence}, we expect $X_g$ to have roughly 
\[
(g-1) \cdot \int_0^\varepsilon \tanh(\pi\sqrt{y}) dy \stackrel{\varepsilon\to 0}{=} \frac{2\pi}{3}\cdot (g-1) \cdot \left( \varepsilon^{3/2} + O\left(\varepsilon^{5/2}\right)\right)
\]
eigenvalues in the interval $[\frac{1}{4},\frac{1}{4}+\varepsilon]$. So, reverse engineering this, we get that if we want an interval with only a constant number of eigenvalues in it, we need to take $\varepsilon =\varepsilon(g) = (g-1)^{-2/3}$. So we would hope that, as $g\to\infty$, $(g-1)^{2/3}\cdot \left(\lambda_1(X_g)-\frac{1}{4}\right)$ converges to a random variable we can understand. This is exactly what happens in the graph-theoretic setting: at exactly the scale $n^{2/3}$ (where $n$ is the number of vertices) the distribution converges to a Tracy--Widom distribution. In short, one hopes that Theorem \ref{thm_HMT} holds for $c=\frac{2}{3}-\varepsilon$ for all $\varepsilon>0$.

\subsection{An asymptotic expansion}

Now we start explaining the strategy of proof. Again, we once and for all fix an even, non-negative function $h\in C_c^\infty(\mathbb{R})$. The main technical result in the paper by \textcite[Theorem 1.4]{HideMaceraThomas}, in which $h^{\ast m}$ means the $m$-fold convolution of the function $h$, is:
\begin{theo}[Hide--Macera--Thomas]\label{thm_HTM_expansion}
There exists a constant $c>0$ such that for all $m\in \mathbb{N}$ there exists a sequence of constants $\Big(a_k^{(m)}\Big)_{k\geq 0}$ such that
\[
\left\lvert \mathbb{E}\left( \frac{1}{g}\cdot \mathrm{Tr}\left( \widehat{h}\left(\sqrt{\Delta_{X_g}-\frac{1}{4}}\right)^m  \right)\right) - \sum_{k=0}^{q-1} \frac{a^{(m)}_k}{g^k} \right\rvert \leq \frac{(cq)^{cq}}{g^q}
\]
for all $q>m$ and $g>cq^c$. Moreover
\[
a_0^{(m)} = \int_{\frac{1}{4}}^\infty \widehat{h}\left(\sqrt{y-\frac{1}{4}}\right)^m \tanh\left(\pi \sqrt{y-\frac{1}{4}}\right)\; dy
\]
and
\[
a_1^{(m)} = \int_0^\infty \sum_{k=1}^\infty 2 \frac{\sinh\left( \frac{\ell}{2} \right)^2}{\sinh\left(\frac{k\ell}{2}\right)} h^{\ast m}(k\ell)\; d\ell -  a_0^{(m)}.
\]
\end{theo}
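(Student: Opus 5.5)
Since $\widehat{h}(\xi)^m$ is the Fourier transform of the $m$-fold convolution $h^{\ast m}$, which is again even, smooth and compactly supported (in $[-m\cdot r,m\cdot r]$ if $\mathrm{supp}(h)\subset[-r,r]$), I would start by applying the Selberg trace formula to the test function $h^{\ast m}$. This splits $\frac{1}{g}\mathrm{Tr}\big(\widehat{h}(\sqrt{\Delta_{X_g}-\frac14})^m\big)$ into two pieces. The first is the topological term $\frac{g-1}{g}\cdot a_0^{(m)}$, after the substitution $y\mapsto \frac14+y^2$ in the integral. Here $\mathrm{area}(X_g)/(4\pi)=g-1$ and the $\tanh$ density is exactly the Plancherel density. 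The second is $\frac{1}{g}\mathbb{E}$ of the geometric sum. The factor $\frac{g-1}{g}=1-\frac1g$ already accounts for $a_0^{(m)}$ and for the $-a_0^{(m)}$ in $a_1^{(m)}$. Since the support of $h^{\ast m}$ is bounded, only closed geodesics of length at most $mr$ contribute, and these are counted with multiplicity $k\leq mr/\mathrm{systole}$.

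The second step is to expand the expected geometric side in powers of $g^{-1}$. I would sort closed geodesics by local topological type $\mathbf{T}$ and use the density expansion \eqref{eq_density_general_types}. The $g^{-1}$ coefficient of the whole trace comes from geodesics of bounded length, so at order $g^0$ of the geometric side only simple non-separating curves survive. Their density is the Mirzakhani function $\frac{V_g^{\mathbf{s}}}{V_g}=\frac{4}{\ell}\sinh^2(\ell/2)+O(\cdot/g)$ from \eqref{eq_sinh_estimate}. Multiplying by the trace-formula weight $\ell\cdot\frac{1}{2\sinh(k\ell/2)}h^{\ast m}(k\ell)$ gives exactly the integrand $2\sinh^2(\ell/2)h^{\ast m}(k\ell)/\sinh(k\ell/2)$. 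After dividing by $g$, this yields the stated $a_1^{(m)}$. Higher coefficients $a_k^{(m)}$ come from types with $|\chi(\mathbf{T})|\leq k$ and from the higher-order terms $f_j^{\mathbf{T}}$. Because the lengths are bounded by $mr$, only finitely many types contribute at each order, so each $a_k^{(m)}$ is well defined.

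The main obstacle is the quantitative remainder $(cq)^{cq}/g^q$, uniform in $q$ and valid once $g>cq^c$. The $O_{\mathbf{T},K,\eta}$ constants in \eqref{eq_density_general_types} are not uniform, and summing them over types is not obviously controlled. I would therefore not use the Anantharaman--Monk expansion as a black box. Instead I would prove a uniform version directly, using Mirzakhani's integration formula (Theorem \ref{thm_Mirzakhani_integration_formula}) applied to the filled subsurfaces. For Weil--Petersson volumes with boundary, ratios such as $V_{g-k,n}(x)/V_g$ admit asymptotic expansions in $1/g$ whose $q$-th coefficient grows like $(cq)^{cq}$ and holds for $g\gtrsim q^c$, via the Mirzakhani--Zograf estimates. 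Since lengths are bounded by $mr$ and $q>m$, the number of relevant types with $|\chi|\leq q$ is at most $q^{O(q)}$, and each carries such an expansion.

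A separate issue is surfaces with very short geodesics, where the multiplicity $k$ in the trace formula is large. I would handle these by bounding $\mathbb{P}(\mathrm{systole}<\epsilon)=O(\epsilon^2)$ and truncating, absorbing this contribution into the error term. Assembling the finitely many type contributions order by order then gives the claimed expansion with error $(cq)^{cq}/g^q$.
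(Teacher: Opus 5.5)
Your first two steps are sound and match what the paper indicates can be obtained from the Anantharaman--Monk expansion and Mirzakhani--Petri type computations. The trace formula applied to $h^{\ast m}$ gives the topological term $(1-\frac{1}{g})\,a_0^{(m)}$ after substituting $y=\frac14+t^2$. At order $g^0$ the expected geometric side only sees simple non-separating geodesics, and this yields $a_1^{(m)}$. The gap is at exactly the step you flag as the main obstacle. You claim that ratios like $V_{g-k,n}(x)/V_g$ have $1/g$-expansions with $q$-th remainder $O((cq)^{cq}g^{-q})$ for $g\gtrsim q^c$, ``via the Mirzakhani--Zograf estimates''. Mirzakhani--Zograf, and the extension to boundary lengths by Anantharaman--Monk, only give the \emph{existence} of such expansions. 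Their coefficients and $O$-constants depend on the order and on $n$ in an uncontrolled way. What you need is factorial control in $q$, uniform in the number $n\lesssim q$ of boundary components created by cutting along $\partial S(\gamma)$ and in bounded boundary lengths. That is not available off the shelf. Establishing it, by going back to the recursions for Weil--Petersson volumes and for intersection numbers of tautological classes and making every estimate effective, is what the paper says occupies the largest part of Hide--Macera--Thomas's work. As written, your argument assumes the one genuinely new estimate in the theorem.

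There are also holes in the bookkeeping. You need bounds of the same $(cq)^{cq}g^{-q}$ shape for every way $\Sigma_g\setminus S(\gamma)$ can split into components of various genera, that is, sums of $\prod_i V_{g_i,n_i}(x)/V_g$ over genus partitions, not only for a connected complement. You also need such a bound for the total contribution of all types with $|\chi(\mathbf{T})|\geq q$. Your count of ``$q^{O(q)}$ relevant types'' is asserted without argument. Finally, the systole truncation does not work as stated. Since $\mathbb{P}(\mathrm{systole}<\epsilon)\asymp\epsilon^2$ is independent of $g$, a fixed cut-off cannot be absorbed into a $g^{-q}$ error. It is also unnecessary. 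Non-simple closed geodesics have length bounded below by a universal constant. For simple ones, Mirzakhani's integration formula is exact, with a density vanishing linearly at $\ell=0$. So the logarithmic singularity at $0$ of $\ell\sum_k h^{\ast m}(k\ell)/(2\sinh(k\ell/2))$ is integrable.
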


At first sight, this theorem looks very much like some of the things we have already seen. Indeed, \textcite{AnantharamanMonk2} had already observed,
using the volume estimates due to \textcite{MirzakhaniZograf,AnantharamanMonk1}, that an asymptotic expansion for expectations like the one above exists (see e.g. \eqref{eq_density_general_types} above). Moreover, as Hide, Macera and Thomas note, the fact that the coefficients $a_0^{(m)}$ and $a_1^{(m)}$ take the stated shape can be derived using computations very similar to those performed by \textcite{MirzakhaniPetri}.

The main point of the theorem above is the innocent looking error estimate. The largest part of the work in the paper goes into proving this estimate, namely, that the constant in the ``$O(g^{-q})$'' estimate grows at most factorially fast as a function of $q$. To prove this, Hide, Macera and Thomas analyze the known recurrences for Weil--Petersson volumes and intersection numbers of tautological classes on moduli spaces and need to make many estimates effective. This is difficult and we will not describe this in detail here. We refer to their paper for details.

The upshot of all of this is that, once the theorem above has been established, unlike in the previous section, we don't need to know anything further about the remaining coefficients except their existence. We also no longer need to analyze geodesics whose lengths grow as a function of the genus. Instead, one can adapt the strategy that \textcite{MageePudervanHandel} used in the proof of a near optimal spectral gap for random finite degree covers of a closed surface, based on the analogue of the theorem above in that context (Theorem 1.7 in the paper by Magee, Puder and van Handel).

\subsection{Deducing a spectral gap}

Just like in the previous section, we will only exclude eigenvalues in an interval $\left(\delta,\frac{1}{4}-g^{-c}\right)$ for a fixed $c>0$ and some small $\delta>0$ and rely on for instance Mirzakhani's result (see Section \ref{sec_WP}) to exclude eigenvalues close to $0$. The number $\delta$ will be a constant in what follows, so given Mirzakhani's bound, we can for instance choose $\delta=0.002$. This assumption of an a priori spectral gap is not strictly necessary, because a similar cancellation to the one observed by Wu--Xue and Lipnowski--Wright (see Section \ref{sec_removing_exp_terms} above) can also be used to deal with the spectrum close to $0$. We will not provide further details here.

The first goal of the remainder of the proof is to extend Theorem \ref{thm_HTM_expansion} from polynomials in $\widehat{h}\left(\sqrt{\Delta_{X_g}-\frac{1}{4}}\right)$ to smooth functions of the same quantity. There is an immediate problem, namely that for an arbitrary smooth function $f$, the operator $f\left(\widehat{h}\left(\sqrt{\Delta_{X_g}-\frac{1}{4}}\right)\right)$ is not trace class. To solve this, Hide, Macera and Thomas consider only functions of the form 
\[
x\mapsto f(x) = x \cdot \widetilde{f}(x),
\]
where $\widetilde{f} \in C^\infty(\mathbb{R})$ is any function.

First of all, we turn the coefficients $a_0^{(m)}$ and $a_1^{(m)}$ into linear functionals on polynomials. Because of the extra multiplication by $x$, we will input a polynomial $p(x) \in \mathbb{C}[x]$, but compute the coefficients for $x\cdot p(x)$. That is, we set
\[
\nu_0\Big( \sum_{m=0}^{q-1} s_m x^m \Big) = \sum_{m=0}^{q-1} s_m \cdot \int_{\frac{1}{4}}^\infty \widehat{h}\left(\sqrt{y-\frac{1}{4}}\right)^{m+1} \tanh\left(\sqrt{y-\frac{1}{4}} \right)\;dy
\]
and
\[
\nu_1\Big( \sum_{m=0}^{q-1} s_m x^m \Big) = \sum_{m=0}^{q-1} s_m\cdot \int_0^\infty \sum_{k=1}^\infty 2 \frac{\sinh\left(\frac{\ell}{2}\right)^2}{\sinh\left(\frac{k\ell}{2}\right)}h^{\ast (m+1)}(k\ell) \; d\ell  -\nu_0\Big( \sum_{m=0}^{q-1} s_m x^m \Big)
\]
\textcite[Proposition 3.2]{HideMaceraThomas} then use Theorem \ref{thm_HTM_expansion} to prove, following a similar strategy to the papers by \textcite{ChenGarzaVargasTroppvanHandel,MageePudervanHandel}, based on a variant of the Markov brothers' inequality, that these functionals extend to compactly supported distributions. That is, we can evaluate them on smooth functions $\widetilde{f}\in C^\infty(\mathbb{R})$ and we moreover have
\begin{multline}\label{eq_expectation_bound}
\left\lvert \mathbb{E}\left( \frac{1}{g}\cdot \mathrm{Tr}\left( f\left(\widehat{h}\left(\sqrt{\Delta_{X_g}-\frac{1}{4}}\right)\right)\right)\right) - \nu_0(\widetilde{f}) - \frac{\nu_1(\widetilde{f})}{g}\right\rvert \\
\leq \frac{C}{g^2}\cdot \left( \lVert w^{(m)} \rVert_{[0,2\pi]} + \lVert \widetilde{f} \rVert_{\left[-\widehat{h}\left(\frac{i}{2}\right),\widehat{h}\left(\frac{i}{2}\right)\right]}\right)
\end{multline}
for some uniform $C,m>0$ for all $\widetilde{f}\in C^\infty(\mathbb{R})$ and all $g\geq 2$. We have again set $f(x) = x\cdot f(x)$ and the function $w\in C^\infty([0,2\pi])$ is defined by $w(\theta) = \widetilde{f}\Big( \widehat{h}\left(\frac{i}{2}\right)\cdot \cos(\theta)\Big)$ and $w^{(m)}$ is its $m$\textsuperscript{th} derivative with respect to $\theta$. Finally, $\lVert \;\cdot\; \rVert_{[a,b]}$ denotes the $\sup$-norm restricted to $[a,b]$.

Next, \textcite[Lemma 3.3.]{HideMaceraThomas} investigate the support of $\nu_0$ and $\nu_1$. This is where the lower bound $\delta>0$ comes into play. Using the fact that $\nu_0$ and $\nu_1$ have a very explicit form when evaluated on polynomial, one proves that, if $f\in C^\infty(\mathbb{R})$ has support 
\[
\mathrm{supp}(f) \subset \left(\widehat{h}(0),\widehat{h}\left(i\cdot\sqrt{\frac{1}{4}-\delta}\right)\right)
\]
then for $\widetilde{f}(x) = f(x)/x$ we have
\[
\nu_0(\widetilde{f}) = \nu_1(\widetilde{f}) = 0.
\]

Using a lemma from \textcite[Lemma 4.10]{ChenGarzaVargasTroppvanHandel}, Hide Macera and Thomas prove that, for any $\eta>0$ small enough, one can find a non-negative function $f\in C^\infty(\mathbb{R})$ such that 
\[
f(x) = 1 \quad \text{for all } x \in \left[ \widehat{h}\left(i\cdot \sqrt{\varepsilon}\right), \widehat{h}\left(i\cdot \sqrt{\frac{1}{4}-\delta-\eta}\right)\right],
\]
\[
f(x) = 0 \quad \text{for all } x \in (-\infty,\widehat{h}(0)] \; \cup \; \left[\widehat{h}\left(i\cdot \sqrt{\frac{1}{4}-\delta+\eta}\right),\widehat{h}\left(\frac{i}{2}\right)\right]
\]
and moreover
\[
\lVert w^{(m)} \rVert_{[0,2\pi]} \leq c(m)\cdot \varepsilon^{-m/2}
\]
for all $\varepsilon>0$ small enough.

This means that we obtain
\begin{align*}
\mathbb{P}\left( \delta+\eta \leq \lambda_1(X_g) \leq \frac{1}{4}-\varepsilon\right) & \leq \mathbb{P}\left(  \mathrm{Tr}\left( f\left(\widehat{h}\left(\sqrt{\Delta_{X_g}-\frac{1}{4}}\right)\right) \right) \geq 1\right) \\
& \stackrel{\text{Markov}}{\leq} g\cdot \mathbb{E}\left( \frac{1}{g} \mathrm{Tr}\left( f\left(\widehat{h}\left(\sqrt{\Delta_{X_g}-\frac{1}{4}}\right)\right) \right) \right).
\end{align*}
Now we use \eqref{eq_expectation_bound} and obtain that
\[
\mathbb{P}\left( \delta+\eta \leq \lambda_1(X_g) \leq \frac{1}{4}-\varepsilon\right) \leq C\cdot \frac{\varepsilon^{-m/2}}{g},
\]
which tends to $0$ as long as $\varepsilon = \varepsilon(g) = o\left(g^{2/m}\right)$ as $g\to\infty$ and this proves the main result of this section.

\newpage



\printbibliography

\end{document}
